\newcommand{\ef}{\end{equation}}
\chardef\bslash=`\\ 
\newtheorem{thm}{Theorem}[subsection]
\renewcommand{\thethm}{%
  \ifnum\value{subsection}>0
    \thesubsection
  \else
    \thesection
  \fi
  .\arabic{thm}%
}
\newtheorem{thm*}{Theorem}
\newtheorem{cor}[thm]{Corollary}
\newtheorem{lem}[thm]{Lemma}
 \newtheorem{defn}[thm]{Definition}
\newtheorem{theorem}{Theorem}[section]
\newtheorem{corollary}[theorem]{Corollary}
\newtheorem{lemma}[theorem]{Lemma}
 \newtheorem{definition}[theorem]{Definition}
 \theoremstyle{remark}
 \newtheorem{remark}[theorem]{Remark}
\newtheorem{acknowledgment*}[thm] {Acknowledgment}
\newcommand{\thmref}[1]{Theorem~\ref{#1}}
\newcommand{\lemref}[1]{Lemma~\ref{#1}}
\newcommand{\corref}[1]{Corollary~\ref{#1}}
 \renewcommand{\sectionmark}[1]{}
 \newcommand{\cupl}{\operatornamewithlimits{\bigcup}\limits}
\newcommand{\doe}{\overset{\text{def}}{=}}
\newcommand{\loc} {\operatorname{loc}}
 \date{}
\begin{document}

\title[Correct Solvability of the  general Sturm-Liouville equation]{Principal fundamental system of solutions,\\ the Hartman-Wintner problem  and correct solvability\\ of the general Sturm-Liouville equation}
\author[N. Chernyavskaya]{N. Chernyavskaya}\address{
Department of Mathematics, Ben-Gurion University of the
Negev, P.O.B. 653, Beer-Sheva, 84105, Israel}
\email{nina@math.bgu.ac.il}
\author[L. Shuster]{L. Shuster}
 \address{Department of Mathematics,
 Bar-Ilan University, 52900 Ramat Gan, Israel}
 \email{miriam@math.biu.ac.il}
 \thanks{corresponding author: L. Shuster}
  \keywords{general Sturm-Liouville equation, correct solvability}
 \subjclass[2020]{34A30, 34B27, 34C11}

\begin{abstract}
 We study the problem of correct solvability in the space $L_p(\mathbb R),$ $p\in[1,\infty)$ of the equation
$$
-(r(x) y'(x))'+q(x)y(x)=f(x),\quad x\in \mathbb R
$$
under the conditions
$$r>0,\quad q\ge 0,\quad \frac{1}{r}\in L_1(\mathbb R),\quad q\in L_1(\mathbb R).$$
        \end{abstract}
\maketitle

    \baselineskip 20pt

\section{Introduction}\label{introduction}
\setcounter{equation}{0} \numberwithin{equation}{section}

In the present paper, we  continue the study of the equation
\begin{equation}\label{1.1}
 -(r(x) y'(x))'+q(x)y(x)=f(x),\quad x\in \mathbb R,\quad f\in L_p(\mathbb R),\quad p\in[1,\infty)
\end{equation}
developed in \cite{3,4,6,7,8,9}.
In these papers, the problem of the existence of a unique bounded solution of \eqref{1.1} in the space $L_p(\mathbb R),$ $p\in[1,\infty)$ was studied under the following two conditions:
\begin{equation}\label{1.2}
r>0,\quad q\ge0,\quad \frac{1}{r}\in L_1^{\loc}(\mathbb R),\quad q\in L_1^{\loc}(\mathbb R),
\end{equation}
\begin{equation}\label{1.3}
\lim_{|d|\to\infty}\int_{x-d}^x\frac{dt}{r(t)}\cdot\int_{x-d}^x q(t)dt=\infty.
\end{equation}

Below, in contrast with the cited papers, we study the same problem under the condition
\begin{equation}\label{1.4}
r>0,\quad q\ge0,\quad \frac{1}{r}\in L_1(\mathbb R),\quad q\in L_1(\mathbb R).
\end{equation}

To describe our work in a more conceptual way, we make some conventions and state some definitions.
In the sequel, condition \eqref{1.2} is our standing assumption and is therefore never mentioned; the letters $c,$ $c(\cdot)$ are used to denote absolute positive constants which are not essential for exposition and may differ even within a single chain of computations. By a solution of \eqref{1.1} we mean a function $y(\cdot),$ absolutely continuous together with the function $r(\cdot)y'(\cdot)$ and satisfying equality \eqref{1.1} almost everywhere in $\mathbb R.$

We now introduce the following basic notion.

\begin{defn}\label{defn1.1} \cite{4,6}
We say that equation \eqref{1.1} is correctly solvable in a given space $L_p(\mathbb R),$ $p\in[1,\infty)$ if the following assertions hold:
\begin{enumerate}
\item[I)] for every   $f\in L_{p}(\mathbb R)$, there exists a unique solution $y\in L_{p}(\mathbb R)$ of \eqref{1.1};
\item[II)] there exists a constant $c(p)\in(0,\infty)$ such that regardless of the choice of $f\in L_{p}(\mathbb R),$ the solution $y\in L_{p}(\mathbb R) $ of \eqref{1.1} satisfies the inequality
    \begin{equation}\label{1.5}
    \|y\|_{L_p(\mathbb R)}\le c(p)\|f\|_{L_p(\mathbb R)}.
    \end{equation}
    \end{enumerate}

    If I) or II) fails to hold, we say that equation \eqref{1.1} in the space $L_p(\mathbb R)$ is not correctly solvable.
    \end{defn}

    We make the following conventions: for brevity, instead of repeating Definition \ref{defn1.1} we say ``the problem concerning I)--II)'' or ``the question concerning I)--II)''. We often omit the word equation before the symbol \eqref{1.1} and the word ``space'' before $L_p(\mathbb R).$ Instead of $L_p(\mathbb R)$ and $\|\cdot\|_{L_p(\mathbb R)}$, we write $L_p$ and $\|\cdot\|_p.$ Finally, the symbol $p'$ denotes the conjugate number of $p\in(1,\infty).$

    Recall that the main goal of our work is to study the question of I)--II) in the case \eqref{1.4}, and we explain now the reason for focusing on this condition. The question concerning I)--II) was posed and answered in \cite{4} under the condition $r\equiv 1,$ and the problem concerning I)--II)  was solved in \cite{6} under the conditions $r\not\equiv1$ and \eqref{1.3}. It turned out that there is an essential difference between the cases $r\equiv1$ and $r\not\equiv 1.$ Whereas in \cite{4} for $r\equiv1$ the criterion for I)--II) to hold is expressed in terms of a simple requirement for the function $q(\cdot)$ itself, in \cite{6} for $r\not\equiv1$ the criterion for I)--II) to hold is expressed in terms of auxiliary functions (implicit function in $r(\cdot)$ and $q(\cdot)$) which are only defined under condition \eqref{1.3}. Thus, the question regarding I)--II) in the case \eqref{1.4} remained open.

    In the present paper, we propose a new version of the method to study the problem concerning I)--II) used in \cite{6}. Its obvious advantage compared to the approach taken in \cite{6} can be stated as follows. In the case \eqref{1.3}, combining the old and new approaches allows one to simplify, in an essential way, the solution of the problem concerning I)--II), whereas in the case of \eqref{1.4} the new approach to I)--II) guarantees a sufficiently simple solution of this problem by standard analytic techniques (see \S3 and \S5 below).

    The structure of the paper is as follows: In \S2, we collect the preliminaries used in the proofs; \S3 contains the statements of our results and some additional comments; all the proofs are given in \S4; in \S5 we give an example  and further discussion.

    \section{Preliminaries}

    Below we present the definitions and facts that are used in the proofs.

    \subsection{Principal fundamental system of solutions (PFSS)}
    \setcounter{equation}{0} \numberwithin{equation}{section}

    \begin{defn}\label{defn2.1.1} \cite{3} We say that a fundamental system of solutions (FSS) $\{u(x),v(x)\},$ $x\in\mathbb R$ of the equation
    \begin{equation}\label{2.1}
    (r(x)z'(x))=q(x)z(x),\quad x\in\mathbb R
    \end{equation}
    is a principal fundamental system of solutions (PFSS) if the functions $u(x)$ and $v(x)$ possess the following properties:
    \begin{equation}\label{2.2}
    u(x)>0,\quad v(x)>0,\quad u'(x)\le 0,\quad v'(x)\ge0\quad\text{for}\quad x\in\mathbb R,
    \end{equation}
    \begin{equation}\label{2.3}
   r(x)[v'(x)u(x)-u'(x)v(x)]=1\quad\text{for}\quad x\in \mathbb R,
    \end{equation}
    \begin{equation}\label{2.4}
    \lim_{x\to \infty}\frac{u(x)}{v(x)}=\lim_{x\to-\infty}\frac{v(x)}{u(x)}=0.
    \end{equation}
    \end{defn}

    In connection with this definition, we refer the reader to \cite{12}. In the sequel, the symbol $\{u(x),v(x)\},$ $x\in\mathbb R$ stands for a PFSS of \eqref{2.1}.

    \begin{thm}\label{thm2.1.2}\cite{3}
    Equation \eqref{2.1} has a PFSS if
    \begin{equation}\label{2.5}
    \int_{-\infty}^0q(t)dt>0,\qquad \int_0^\infty q(t)dt>0.
        \end{equation}
    \end{thm}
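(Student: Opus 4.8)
The plan is to take as a PFSS the pair $\{u,v\}$, where $u$ is the principal (recessive, decreasing) solution of \eqref{2.1} at $+\infty$ and $v$ the principal (recessive, increasing) solution at $-\infty$, constructed by a monotone shooting argument driven entirely by the sign condition $q\ge0$, and then to check \eqref{2.2}--\eqref{2.4}. Everything rests on the elementary remark that if $z$ solves \eqref{2.1} and $z>0$ on an interval $I$, then $(rz')'=qz\ge0$ on $I$, so $rz'$ is nondecreasing on $I$; consequently a positive solution can pass from decreasing to increasing at most once, equation \eqref{2.1} is disconjugate, and once $rz'\le0$ at some point of $I$ the same holds at every point of $I$ to its left.

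I would build $u$ as follows. Fix the point $0$; since under \eqref{1.2} the coefficients of \eqref{2.1} are locally integrable, the solution $z_s$ with $z_s(0)=1$, $(rz_s')(0)=-s$ exists on $\mathbb{R}$ and depends continuously on $s\ge0$. Decompose $z_s=z_0+s\zeta$, where $z_0$ is the (increasing) solution with $(rz_0')(0)=0$ and $\zeta$ the solution with $\zeta(0)=0$, $(r\zeta')(0)=-1$, which is negative and decreasing on $(0,\infty)$. Then $z_s>0$ on $[0,\infty)$ iff $s<z_0(x)/(-\zeta(x))$ for every $x>0$; and a short Wronskian computation shows $x\mapsto z_0(x)/(-\zeta(x))$ is strictly decreasing on $(0,\infty)$, from $+\infty$ near $0$ to a finite limit $m\ge0$. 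Hence the set of admissible $s$ is exactly $[0,m]$, so $s^\ast:=m<\infty$, and $u:=z_{s^\ast}>0$ on $[0,\infty)$. Moreover $u$ cannot pass from decreasing to increasing: by continuous dependence such a switch would persist for $s$ slightly larger than $s^\ast$ and keep $z_s>0$ on $[0,\infty)$, which is false. Thus $u'\le0$ on $[0,\infty)$, and then $u$ continues to all of $\mathbb{R}$ with $u>0$, $u'\le0$: going left from $0$, $ru'$ stays $\le(ru')(0)=-s^\ast\le0$ while $u>0$, so $u$ stays nonincreasing, hence $\ge u(0)=1$. The same construction applied to the reflected equation (the change $x\mapsto-x$ swaps $\pm\infty$ and, by \eqref{2.5}, keeps the relevant tail integral of $q$ positive) yields $v$ with $v>0$, $v'\ge0$ on $\mathbb{R}$.

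Now I verify the PFSS axioms. The signs \eqref{2.2} hold by construction. Linear independence of $u,v$ is the one place where \eqref{2.5} is genuinely used: if $v=cu$ with $c>0$, then $u$ is both nonincreasing and nondecreasing, hence constant, hence $qu\equiv0$, i.e.\ $q\equiv0$, contradicting \eqref{2.5}; so $\{u,v\}$ is an FSS. By Abel's identity for \eqref{2.1}, $W:=r(uv'-u'v)$ is constant; it is $\ge0$ by the signs in \eqref{2.2} and $\ne0$ by independence, hence $W>0$, and replacing $u$ by $u/\sqrt{W}$ and $v$ by $v/\sqrt{W}$ gives \eqref{2.3} without disturbing the other properties. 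For \eqref{2.4}, normalize $W=1$; then $(v/u)'=1/(ru^2)>0$, so $(v/u)(x)=(v/u)(0)+\int_0^x dt/(ru^2)$, and therefore $u(x)/v(x)\to0$ as $x\to\infty$ precisely when $\int_0^\infty dt/(ru^2)=\infty$, i.e.\ when $u$ is principal at $+\infty$. This last fact I would prove by a companion-solution argument: let $\tilde u$ solve \eqref{2.1} with $\tilde u(0)=0$, $(r\tilde u')(0)=1$ (so $\tilde u>0$ and increasing on $(0,\infty)$); the Wronskian of $u,\tilde u$ is $1$, so $\tilde u/u$ increases from $0$ to $L:=\int_0^\infty dt/(ru^2)\in(0,\infty]$, and if $L<\infty$ then $(Lu-\tilde u)/L$ is positive on $[0,\infty)$ (since $\tilde u/u<L$ there) and coincides with $z_{s^\ast+1/L}$, contradicting $s^\ast=\sup\{s:z_s>0\text{ on }[0,\infty)\}$; hence $L=\infty$. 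By the reflection symmetry, $v(x)/u(x)\to0$ as $x\to-\infty$, which completes \eqref{2.4}.

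The main obstacle is the soft core of the construction of $u$: establishing that the shooting limit $u=z_{s^\ast}$ is strictly positive and (weakly) monotone on $[0,\infty)$ rather than merely nonnegative, and---what really makes \eqref{2.4} work---identifying $u$ as the principal solution, i.e.\ proving $\int_0^\infty dt/(ru^2)=\infty$ via the companion solution. The rest (the sign bookkeeping for \eqref{2.2}, the constancy and positivity of $W$, and the normalization \eqref{2.3}) is routine once the monotonicity of $rz'$ on intervals of positivity is available.
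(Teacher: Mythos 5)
The paper does not actually prove \thmref{thm2.1.2}; it is imported verbatim from \cite{3}, so there is no in-paper argument to compare against. Your proof is correct and self-contained: it is the classical construction of the principal (recessive) solutions at $\pm\infty$ for a nonoscillatory equation with $q\ge 0$ (cf.\ Hartman, Ch.~XI). The key steps all check out: $r(0)(z_0\zeta'-z_0'\zeta)\equiv -1$ gives $\bigl(z_0/(-\zeta)\bigr)'=-1/(r\zeta^2)<0$, so the admissible set is exactly $[0,m]$ with $m<\infty$ and $u=z_{m}>0$; the perturbation argument ruling out $(ru')(x_1)>0$ is even simpler than you state, since $z_s=z_0+s\zeta$ is \emph{affine} in $s$, so no general continuous-dependence theorem is needed; and the companion-solution argument correctly identifies $u-\tilde u/L$ with $z_{s^\ast+1/L}$ and forces $L=\int_0^\infty dt/(ru^2)=\infty$, which is exactly what \eqref{2.4} requires once $W$ is normalized to $1$. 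Two remarks. First, as you observe, \eqref{2.5} enters only through $q\not\equiv 0$ (to exclude $v=cu$, which would force $u$ constant and hence $q\equiv 0$); your argument therefore proves the formally stronger statement that a PFSS exists under \eqref{1.2} whenever $q\not\equiv 0$. This is not an error --- the theorem is only a sufficiency claim --- and it is consistent with Theorems~\ref{thm3.1} and \ref{thm3.5} of the paper, which settle the remaining case $q\equiv 0$. Second, since $1/r$ and $q$ are only locally integrable, derivatives should be read through the quasi-derivative $ru'$ (absolutely continuous, with $(ru')'=qu$ a.e.); you implicitly do this by shooting with the datum $(rz_s')(0)=-s$ and by phrasing monotonicity in terms of $rz'$, which is the right convention and matches the paper's notion of solution.
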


    \begin{thm}\label{thm2.1.3}\cite{3},\cite{11},\cite[\S19.53]{18}.
    Suppose that equation \eqref{2.1} has a PFSS. Denote
    \begin{equation}\label{2.6}
     \rho(x)=u(x)v(x),\quad x\in\mathbb R.
\end{equation}
    Then one has the Davies-Harrell formulas:
       \begin{equation}\label{2.7}
    u(x)=\sqrt{\rho(x)}\exp\left(-\frac{1}{2}\int_{x_0}^x\frac{d\xi}{r(\xi)\rho(\xi)}\right),\ v(x)=\sqrt{\rho(x)}\exp\left(\frac{1}{2}\int_{x_0}^x\frac{d\xi}{r(\xi)\rho(\xi)}\right),\ x\in\mathbb R,
    \end{equation}
    where $x_0$ is a unique root of the equation
    $u(x)=v(x),$  $x\in\mathbb R.$
    \end{thm}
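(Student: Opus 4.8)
The plan is to start from the PFSS $\{u,v\}$ whose existence is granted, and to exploit the two structural identities it satisfies: the Wronskian normalization \eqref{2.3} and the definition \eqref{2.6} of $\rho=uv$. The key observation is that the quotient $w(x)=v(x)/u(x)$ is strictly increasing and positive (its numerator is nondecreasing, its denominator nonincreasing, both positive), so $\ln w$ is a legitimate primitive to work with. First I would compute $w'=(v'u-u'v)/u^2$, and substitute \eqref{2.3} in the form $v'u-u'v=1/r$ together with $u^2=\rho/w$ (since $u^2\cdot w = u^2\cdot v/u = uv=\rho$). This gives the separable relation
\begin{equation}\label{2.7a}
\frac{w'(x)}{w(x)}=\frac{1}{r(x)\,u(x)^2\,w(x)/w(x)}=\frac{1}{r(x)\rho(x)},\qquad x\in\mathbb R,
\end{equation}
that is, $(\ln w)'=1/(r\rho)$.

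Next I would integrate \eqref{2.7a} from $x_0$ to $x$, where $x_0$ is chosen so that $u(x_0)=v(x_0)$, equivalently $w(x_0)=1$. One must first check that such an $x_0$ exists and is unique: existence follows because $w$ is continuous with $w(x)\to 0$ as $x\to-\infty$ and $w(x)\to\infty$ as $x\to+\infty$ by the limit relations \eqref{2.4}, so by the intermediate value theorem $w$ attains the value $1$; uniqueness follows from strict monotonicity of $w$, which in turn requires $r\rho>0$ a.e.\ — this is where I expect the only real subtlety, since a priori $u'$ or $v'$ could vanish on a set of positive measure. I would resolve this by noting that if $w'=0$ on a set of positive measure then, by \eqref{2.7a} rewritten as $w'=w/(r\rho)$, we would need $r\rho=\infty$ there, contradicting $1/r\in L^1_{\mathrm{loc}}$ and boundedness of $\rho$ on compacta; hence $w$ is genuinely strictly increasing and $x_0$ is unique. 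Integration then yields
\begin{equation}\label{2.7b}
\ln\frac{v(x)}{u(x)}=\int_{x_0}^x\frac{d\xi}{r(\xi)\rho(\xi)},\qquad x\in\mathbb R.
\end{equation}

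Finally I would solve the pair of equations $uv=\rho$ and $v/u=\exp\!\big(\int_{x_0}^x d\xi/(r\rho)\big)$ for $u$ and $v$ separately: multiplying the two relations gives $v^2=\rho\exp(\int_{x_0}^x d\xi/(r\rho))$, and dividing gives $u^2=\rho\exp(-\int_{x_0}^x d\xi/(r\rho))$; taking positive square roots (legitimate since $u,v>0$) produces exactly \eqref{2.7}. The references \cite{11} and \cite[\S19.53]{18} supply the classical antecedents; the argument above is the short self-contained version. The only step requiring care beyond routine manipulation is the well-definedness and uniqueness of $x_0$, i.e.\ the strict monotonicity of $v/u$, handled as indicated; everything else is algebra plus one integration.
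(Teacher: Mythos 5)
Your proposal is correct, and it is essentially the standard derivation of the Davies--Harrell formulas; the paper itself gives no proof of Theorem~\ref{thm2.1.3} (it is quoted as a preliminary from \cite{3}, \cite{11}, \cite[\S19.53]{18}), so there is nothing internal to compare against, but your route is the same one those sources take: from \eqref{2.3} one gets $(v/u)'=1/(ru^2)$, hence $(\ln(v/u))'=1/(r\rho)$, and one then solves the pair $uv=\rho$, $v/u=\exp\bigl(\int_{x_0}^x d\xi/(r\rho)\bigr)$. Two small remarks: the middle member of your display \eqref{2.7a} is garbled ($r u^2 w/w$ should read $r u^2 w=r\rho$), and the ``only real subtlety'' you flag is in fact immediate --- since $w'=(v'u-u'v)/u^2=1/(ru^2)>0$ a.e.\ and $w$ is absolutely continuous, $w$ is strictly increasing without any auxiliary argument about $r\rho=\infty$.
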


    \begin{cor}\label{cor2.1.4}\cite{8}
    We have the following relations for a PFSS of equation \eqref{2.1} and the functions $\rho(\cdot)$ (see \eqref{2.6}):
    \begin{equation}\label{2.8}
    \frac{u'(x)}{u(x)}=-\frac{1-r(x)\rho'(x)}{2r(x)\rho(x)},\qquad \frac{v'(x)}{v(x)}=\frac{1+r(x)\rho'(x)}{2r(x)\rho(x)},\qquad  x\in\mathbb R,
    \end{equation}
     \begin{equation}\label{2.9}
    r(x)|\rho'(x)|<1,\qquad x\in\mathbb R.
    \end{equation}
    \end{cor}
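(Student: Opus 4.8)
The plan is to read \eqref{2.8} off directly from the Wronskian normalization \eqref{2.3}, and then to deduce \eqref{2.9} from \eqref{2.8} by combining the monotonicity encoded in \eqref{2.2} with the asymptotics \eqref{2.4} and the integrability hypothesis \eqref{1.4}.

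For \eqref{2.8}: since $\rho=uv$ by \eqref{2.6}, one has $\rho'=u'v+uv'$, hence $r\rho'=(ru')v+u(rv')$. Writing \eqref{2.3} as $r(uv'-u'v)=1$ and adding it to, respectively subtracting it from, the previous identity gives
\begin{equation*}
r\rho'+1=2u(rv'),\qquad r\rho'-1=2(ru')v .
\end{equation*}
Dividing these by $2r\rho=2r\,uv>0$ yields exactly the two identities in \eqref{2.8}; the same formulas may alternatively be obtained by logarithmic differentiation of the Davies--Harrell representations \eqref{2.7}.

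For \eqref{2.9}: from \eqref{2.2} we have $u>0$, $v>0$, $u'\le 0$, $v'\ge 0$, together with $r>0$, $\rho>0$. Substituting into the two identities of \eqref{2.8}, the first shows $u'/u\le 0$, forcing $r\rho'-1\le 0$, and the second shows $v'/v\ge 0$, forcing $r\rho'+1\ge 0$; hence $-1\le r(x)\rho'(x)\le 1$ for all $x$. To upgrade this to the strict inequality I would argue by contradiction. If $r(x_1)\rho'(x_1)=1$ for some $x_1$, then \eqref{2.8} gives $u'(x_1)=0$; since $u$ solves \eqref{2.1} with $q\ge 0$, $u>0$, the function $ru'$ satisfies $(ru')'=qu\ge 0$ a.e., so it is non-decreasing, and being also $\le 0$ and equal to $0$ at $x_1$ it must vanish on $[x_1,\infty)$, i.e.\ $u\equiv u(x_1)>0$ there. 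Then \eqref{2.3} gives $v'(x)=\frac{1}{u(x_1)r(x)}$ on $[x_1,\infty)$, so $v(x)=v(x_1)+\frac{1}{u(x_1)}\int_{x_1}^x\frac{d\xi}{r(\xi)}$, which is bounded on $[x_1,\infty)$ because $1/r\in L_1(\mathbb R)$ by \eqref{1.4}; hence $u(x)/v(x)=u(x_1)/v(x)$ stays bounded away from $0$ as $x\to\infty$, contradicting \eqref{2.4}. The case $r(x_2)\rho'(x_2)=-1$ is symmetric: it forces $v'(x_2)=0$, then $rv'$ (non-decreasing, $\ge 0$, zero at $x_2$) vanishes on $(-\infty,x_2]$, so $v\equiv v(x_2)$ there, and \eqref{2.3} makes $u(x)=u(x_2)+\frac{1}{v(x_2)}\int_x^{x_2}\frac{d\xi}{r(\xi)}$ bounded on $(-\infty,x_2]$, contradicting $\lim_{x\to-\infty}v(x)/u(x)=0$. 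This yields $-1<r(x)\rho'(x)<1$ for every $x$, which is \eqref{2.9}.

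I expect the strictness in \eqref{2.9} to be the only genuine difficulty: the sign conditions \eqref{2.2} alone give only $|r\rho'|\le 1$, and for instance when $r\equiv 1$ with $q$ compactly supported the value $1$ is actually attained on a half-line, so the strict bound must really exploit both the one-sided second-order information $(ru')'=qu\ge 0$ coming from $q\ge 0$ and the integrability $1/r\in L_1(\mathbb R)$ in combination with the ratio asymptotics \eqref{2.4}.
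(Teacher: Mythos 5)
Your derivation of \eqref{2.8} is correct and is surely the intended one (the paper itself gives no proof here; the corollary is simply quoted from the earlier paper cited in its statement): writing $r\rho'=ru'v+ruv'$ and combining with the Wronskian normalization \eqref{2.3} gives $r\rho'+1=2r u v'$ and $r\rho'-1=2r u' v$, which is \eqref{2.8}; logarithmic differentiation of the Davies--Harrell formulas \eqref{2.7} gives the same identities. The non-strict bound $|r(x)\rho'(x)|\le 1$ then follows at once from the sign conditions \eqref{2.2}, exactly as you say, and that non-strict form is all that is ever used later in the paper (e.g.\ in the proof of Lemma \ref{lem3.11}).

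The strictness is the one delicate point, and you have correctly isolated it. By your identities, $r(x_1)\rho'(x_1)=1$ is equivalent to $u'(x_1)=0$, and your own remark about $r\equiv1$ with $q$ compactly supported shows that under the hypotheses actually printed in the corollary --- condition \eqref{1.2} plus the mere existence of a PFSS --- the value $1$ genuinely is attained on a half-line, so the strict inequality \eqref{2.9} is not provable without some supplementary assumption; in effect you have found that the statement as printed needs one. Your repair imports $1/r\in L_1$ from \eqref{1.4}: then $u\equiv u(x_1)$ on $[x_1,\infty)$ forces $v$ to remain bounded there and $u/v\not\to0$, contradicting \eqref{2.4}. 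That argument is sound and covers the setting of the present paper. Be aware, though, that the source being cited operates under hypotheses of type \eqref{2.20} (positive mass of $q$ on every half-line), where the cleaner route is the one you already have half of: $ru'$ is nondecreasing and nonpositive, so $u'(x_1)=0$ forces $ru'\equiv0$ and hence $q=0$ a.e.\ on $[x_1,\infty)$, contradicting $\int_{x_1}^{\infty}q\,dt>0$, with no appeal to \eqref{2.4} or to integrability of $1/r$. Either supplementary hypothesis rescues \eqref{2.9}; neither is stated in the corollary, so you should flag which one you are using rather than silently invoking \eqref{1.4}.
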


    \begin{remark}\label{rem2.1.5}
    In a particular case, formulas \eqref{2.7} were obtained by N. Abel (see \cite[19.53]{18}). Under the conditions \eqref{1.2} and $r\equiv 1,$ they were proven in \cite{11}. In the present statement, \thmref{thm2.1.3} was proven in \cite{3}.
    \end{remark}

    \subsection{Hartman-Wintner problem}

    In this section, we consider the equations
    \begin{gather}\label{2.10}
    (r(x)y'(x))'=q(x)y(x),\qquad x\in\mathbb R,
    \\ \label{2.11}
    (r(x)z'(x))'=q_1(x)z(x),\qquad x\in\mathbb R,
    \end{gather}
    where the functions $r(x),$ $q(x)$ and $q_1(x)$ are real and continuous for $x\in\mathbb R$ and, in addition, $r(x)>0$ for $x\in\mathbb R.$ We also assume that equation \eqref{2.11} has a PFSS $\{u_1(x),v_1(x)\},$ $x\in\mathbb R.$

    \begin{defn}\label{defn2.21} \cite{2,12}
    We say that the Hartman-Wintner problem for equations \eqref{2.10} and \eqref{2.11} is solvable as $x\to\infty$ (as $x\to-\infty)$ if there is a FSS $\{\hat u(x),\hat v(x)\},$ $x\in[0,\infty)$ $(\{\tilde u(x),\tilde v(x)\}$ $x\in(-\infty,0])$ of equation  \eqref{2.10} such that
    \begin{equation}\label{2.12}
    \lim_{x\to\infty}\frac{\hat u(x)}{u_1(x)}=\lim_{x\to\infty}\frac{\hat v(x)}{v_1(x)}=1\qquad\left(\lim_{x\to-\infty}\frac{\tilde u(x)}{u_1(x)}=\lim_{x\to-\infty}\frac{\tilde v(x)}{v_1(x)}=1\right),
    \end{equation}
    \begin{alignat}{2}
  & \frac{\hat u'(x)}{\hat u(x)}-\frac{u_1'(x)}{u_1(x)}=o\left(\frac{1}{r(x)u_1(x)v_1(x)}\right)\qquad&&\text{as}\quad x\to\infty,
  \label{2.13} \\
   & \bigg( \frac{\tilde u'(x)}{\tilde u(x)}-\frac{u_1'(x)}{u_1(x)}=o\left(\frac{1}{r(x)u_1(x)v_1(x)}\right)\qquad
   &&\text{as}\quad x\to-\infty\bigg),
  \nonumber  \end{alignat}
    \begin{alignat}{2}
  & \frac{\hat v'(x)}{\hat v(x)}-\frac{v_1'(x)}{v_1(x)}=o\left(\frac{1}{r(x)u_1(x)v_1(x)}\right)\qquad&&\text{as}\quad x\to\infty,
  \label{2.14} \\
   & \bigg( \frac{\tilde v'(x)}{\tilde v(x)}-\frac{v_1'(x)}{v_1(x)}=o\left(\frac{1}{r(x)u_1(x)v_1(x)}\right)\qquad
   &&\text{as}\quad x\to-\infty\bigg).
  \nonumber  \end{alignat}
    \end{defn}

    \begin{remark}\label{rem2.2.2}
    Here, taking into account the goals of the present work, we slightly restrict the statement of the Hartman-Wintner problem compared to the original papers (see \cite{2}, \cite{12}). For brevity, below we refer to the Hartman-Wintner problem as problem \eqref{2.12}-\eqref{2.14}.
    \end{remark}

    We need the following notation:
    \begin{gather}\label{2.15}
    \rho_1(x)=u_1(x)v_1(x),\quad x\in\mathbb R,
    \\ \label{2.16}
    (\Delta q)(x)=q(x)-q_1(x),\quad x\in\mathbb R,
\\ \label{2.17}
    I^{(-)}(x)=\int_{-\infty}^x(\Delta q)(t)p_1(t)dt,\quad I^{(+)}(x)=\int_x^\infty(\Delta q)(t)\rho_1(t)dt,\quad x\in\mathbb R.
    \end{gather}

 \begin{thm}\label{thm2.2.3} (\cite{12}; see also \cite{2})
 Problem \eqref{2.12}-\eqref{2.14} for equations \eqref{2.10} and \eqref{2.11} is solvable as $x\to\infty$ (as $x\to-\infty)$ if the integral $I^{(+)}(0)$ $(I^{(-)}(0))$ absolutely converges.
 \end{thm}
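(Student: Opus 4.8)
The plan is to reduce equation \eqref{2.10} near $+\infty$ to a small perturbation of a trivial equation via the substitution $y=u_1w$, to solve the resulting integral equation by successive approximations, and then to recover a second, independent solution by reduction of order. The case $x\to-\infty$ is entirely symmetric (interchange the roles of $u_1$ and $v_1$, or reflect $x\mapsto-x$), so I only treat $x\to\infty$. Substituting $y=u_1w$ into \eqref{2.10} and using $(ru_1')'=q_1u_1$, a short computation turns \eqref{2.10} into the equivalent equation $(ru_1^2w')'=(\Delta q)u_1^2w$ on $[0,\infty)$, whose homogeneous version has the two solutions $w\equiv1$ and $W(x)=\int_0^x\frac{ds}{r(s)u_1(s)^2}$. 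From \eqref{2.3} one has $(v_1/u_1)'=1/(ru_1^2)$ and $(u_1/v_1)'=-1/(rv_1^2)$, so by \eqref{2.4}, $W(x)\to+\infty$, while for $t\ge x$ one gets $\int_x^t\frac{ds}{r(s)u_1(s)^2}=\frac{v_1(t)}{u_1(t)}-\frac{v_1(x)}{u_1(x)}\le\frac{v_1(t)}{u_1(t)}$. Variation of parameters then suggests looking for $\hat u=u_1\hat w$ with $\hat w(+\infty)=1$, where $\hat w$ is a fixed point of
\[
\hat w(x)=1-\int_x^\infty\Bigl(\int_x^t\frac{ds}{r(s)u_1(s)^2}\Bigr)(\Delta q)(t)\,u_1(t)^2\,\hat w(t)\,dt .
\]

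\textit{The key estimate and the iteration.} The crucial observation is that, for $0\le x\le t$,
\[
0\le\Bigl(\int_x^t\frac{ds}{r(s)u_1(s)^2}\Bigr)u_1(t)^2\le\frac{v_1(t)}{u_1(t)}\,u_1(t)^2=u_1(t)v_1(t)=\rho_1(t),
\]
so the kernel of the above integral equation is dominated by $|(\Delta q)(t)|\,\rho_1(t)$, whose integral over $[0,\infty)$ is finite by the assumed absolute convergence of $I^{(+)}(0)$. Put $\varepsilon(x)=\int_x^\infty|(\Delta q)(t)|\rho_1(t)\,dt$, so that $\varepsilon'=-|(\Delta q)|\rho_1$ and $\varepsilon(\infty)=0$. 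Starting from $\hat w_0\equiv1$ and letting $\hat w_{n+1}$ be the right-hand side of the integral equation evaluated at $\hat w_n$, an induction based on $\int_x^\infty(-\varepsilon')\varepsilon^n=\varepsilon(x)^{n+1}/(n+1)$ gives $|\hat w_n(x)-\hat w_{n-1}(x)|\le\varepsilon(x)^n/n!$. Hence the approximations converge uniformly on $[0,\infty)$ to a continuous function $\hat w$ solving the integral equation, and therefore the $w$-equation, with $|\hat w(x)-1|\le e^{\varepsilon(x)}-1\to0$ as $x\to\infty$ and $\hat w>0$ on some $[a,\infty)$. Thus $\hat u:=u_1\hat w$ solves \eqref{2.10} and $\hat u/u_1\to1$, which is the $\hat u$-part of \eqref{2.12}.

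\textit{Logarithmic derivatives and the second solution.} Differentiating the integral equation (the boundary term vanishes, the inner integral being empty at $t=x$) yields $\hat w'(x)=\frac{1}{r(x)u_1(x)^2}\int_x^\infty(\Delta q)(t)\,u_1(t)^2\,\hat w(t)\,dt$. Since $u_1/v_1$ is decreasing, $u_1(t)^2=\rho_1(t)\frac{u_1(t)}{v_1(t)}\le\rho_1(t)\frac{u_1(x)}{v_1(x)}$ for $t\ge x$, so that integral is $O\!\bigl(\varepsilon(x)u_1(x)/v_1(x)\bigr)$ and hence
\[
\frac{\hat u'}{\hat u}-\frac{u_1'}{u_1}=\frac{\hat w'}{\hat w}=O\!\Bigl(\frac{\varepsilon(x)}{r(x)u_1(x)v_1(x)}\Bigr)=o\!\Bigl(\frac{1}{r(x)\rho_1(x)}\Bigr),
\]
which is \eqref{2.13}. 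Next, set $\hat v(x)=\hat u(x)\int_a^x\frac{ds}{r(s)\hat u(s)^2}$ on $[a,\infty)$ and extend $\hat v$ to $[0,\infty)$ as the solution of \eqref{2.10} with these Cauchy data; then $r(\hat u\hat v'-\hat u'\hat v)\equiv1$, so $\{\hat u,\hat v\}$ is a FSS of \eqref{2.10}. Because $\hat w\to1$ and $\int_a^\infty\frac{ds}{ru_1^2}=\infty$, a Stolz--Ces\`{a}ro argument gives $\int_a^x\frac{ds}{r\hat u^2}=\int_a^x\frac{ds}{ru_1^2\hat w^2}=(1+o(1))\frac{v_1(x)}{u_1(x)}$, whence $\hat v/v_1=\frac{\hat u}{v_1}\int_a^x\frac{ds}{r\hat u^2}=\hat w\,(1+o(1))\to1$, the $\hat v$-part of \eqref{2.12}. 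Finally, from $\frac{\hat v'}{\hat v}=\frac{\hat u'}{\hat u}+\frac{1}{r\hat u^2}\Bigl(\int_a^x\frac{ds}{r\hat u^2}\Bigr)^{-1}$, $\frac{v_1'}{v_1}-\frac{u_1'}{u_1}=\frac{1}{r\rho_1}$, and the asymptotics just obtained, a routine computation gives $\frac{\hat v'}{\hat v}-\frac{v_1'}{v_1}=o\bigl(1/(r\rho_1)\bigr)$, i.e. \eqref{2.14}.

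\textit{Expected main difficulty.} The essential step is the key estimate above: equation \eqref{2.10} must be expressed through precisely that Green's-function combination whose kernel is controlled by $|(\Delta q)(t)|\rho_1(t)$, and not by a larger weight such as $|(\Delta q)|u_1^2$ or $|(\Delta q)|v_1^2$ (which would force hypotheses much stronger than the absolute convergence of $I^{(+)}(0)$). Once this is in hand, the convergence of the iteration and the verification of \eqref{2.13}--\eqref{2.14} are routine; the only point requiring a little care is the Stolz--Ces\`{a}ro step in passing from $\hat u$ to $\hat v$, which relies on the non-integrability of $1/(r u_1^2)$ at $+\infty$.
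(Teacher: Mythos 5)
The paper does not actually prove this statement --- Theorem \ref{thm2.2.3} is quoted from Hartman's book and from \cite{2} --- so there is no in-paper argument to compare against. Your reconstruction is the classical successive-approximations proof of the Hartman--Wintner theorem, and it is essentially sound: the reduction $(r u_1^2 w')'=(\Delta q)u_1^2 w$, the key bound $\bigl(\int_x^t \frac{ds}{r u_1^2}\bigr)u_1(t)^2\le \rho_1(t)$ (which is exactly what lets the hypothesis be the absolute convergence of $I^{(+)}(0)$ and nothing stronger), the Gronwall-type iteration with $\varepsilon(x)^n/n!$, and the recovery of $\hat v$ by reduction of order plus the Stolz--Ces\`aro step are all correct. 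The one genuine slip is a sign: integrating $(r u_1^2 \hat w')'=(\Delta q)u_1^2\hat w$ twice from $x$ to $\infty$ with the data $\hat w(\infty)=1$, $r u_1^2\hat w'(\infty)=0$ gives
\begin{equation*}
\hat w(x)=1+\int_x^\infty\Bigl(\int_x^t\frac{ds}{r(s)u_1(s)^2}\Bigr)(\Delta q)(t)\,u_1(t)^2\,\hat w(t)\,dt,
\qquad
\hat w'(x)=-\frac{1}{r(x)u_1(x)^2}\int_x^\infty(\Delta q)\,u_1^2\,\hat w\,dt,
\end{equation*}
whereas with the minus sign you wrote, the fixed point satisfies $(r u_1^2\hat w')'=-(\Delta q)u_1^2\hat w$, so your $\hat u=u_1\hat w$ solves $(ry')'=(2q_1-q)y$ rather than \eqref{2.10}. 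Since every estimate you use is in absolute value, correcting the sign changes nothing downstream; but as written the constructed function does not solve the right equation, so the sign should be fixed. Two smaller points worth a sentence each in a full write-up: term-by-term differentiation of the uniformly convergent series to confirm that the fixed point of the integral equation really solves the ODE, and the remark that $\hat v$, defined on $[a,\infty)$, extends to $[0,\infty)$ as a solution with the Wronskian identity persisting by constancy of $r(\hat u\hat v'-\hat u'\hat v)$.
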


  \begin{thm}\label{thm2.2.4}   \cite{2})
 Problem \eqref{2.12}-\eqref{2.14} for equations \eqref{2.10} and \eqref{2.11} is solvable as $x\to\infty$ (as $x\to-\infty)$ if the integral $I^{(+)}(0)$ $(I^{(-)}(0))$   converges at least conditionally, and the following inequality holds:
 \begin{equation}\label{2.18}
 \int_0^\infty\frac{I^{(+)}(x)^2}{r(x)\rho_1(x)}dx<\infty\quad
 \left(\int_{-\infty}^0\frac{I^{(-)}(x)^2}{r(x)\rho_1(x)} dx<\infty\right).
 \end{equation}
 \end{thm}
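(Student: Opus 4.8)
The plan is to carry out the construction as $x\to\infty$ in full; the case $x\to-\infty$ then follows by the substitution $x\mapsto-x$, which interchanges $u_1$ with $v_1$ and $+\infty$ with $-\infty$ and turns $I^{(-)}$ into the $I^{(+)}$ of the reflected equation. Write $\rho_1=u_1v_1$ and, using the Davies--Harrell representation \eqref{2.7} (Theorem~\ref{thm2.1.3}), put $m(x)=\int_{x_0}^x\frac{d\xi}{r(\xi)\rho_1(\xi)}$, so that $v_1/u_1=e^{m}$ and, by \eqref{2.4}, $m(x)\to+\infty$ as $x\to+\infty$. Looking for a solution of \eqref{2.10} in the form $\hat u=u_1h$, equation \eqref{2.10} is equivalent to $(ru_1^2h')'=(\Delta q)u_1^2h$, and putting $\Phi:=-r\rho_1h'$ (and using $(v_1/u_1)'=1/(ru_1^2)$, which follows from \eqref{2.3}) one obtains the first-order system
\begin{equation*}
h'=-\frac{\Phi}{r\rho_1},\qquad \Phi'=\frac{\Phi}{r\rho_1}-\rho_1(\Delta q)h .
\end{equation*}
A solution with $h\to1$ and $\Phi\to0$ as $x\to\infty$ gives exactly \eqref{2.12} and \eqref{2.13} for $\hat u$, since $\hat u'/\hat u-u_1'/u_1=h'/h=-\Phi/(r\rho_1h)$; so the first goal is to produce such a solution.

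Integrating the system from $+\infty$ under these boundary conditions, and then integrating by parts with the identity $\rho_1(\Delta q)\,dt=-dI^{(+)}(t)$, converts it into the equivalent fixed-point system
\begin{align*}
\Phi(x)&=h(x)I^{(+)}(x)-\int_x^\infty e^{m(x)-m(t)}m'(t)\,I^{(+)}(t)\bigl(h(t)+\Phi(t)\bigr)\,dt,\\
h(x)&=1-\int_x^\infty m'(t)\,I^{(+)}(t)\Phi(t)\,dt+\int_x^\infty e^{m(x)-m(t)}m'(t)\,I^{(+)}(t)\bigl(h(t)+\Phi(t)\bigr)\,dt .
\end{align*}
Here the key structural facts are: $e^{m(x)-m(t)}m'(t)\,dt$ is a probability measure on $[x,\infty)$ (since $m'=1/(r\rho_1)>0$ and $m(\infty)=+\infty$), so the integrals against it are bounded by $\delta(x):=\sup_{t\ge x}|I^{(+)}(t)|$, which tends to $0$ by the (at least conditional) convergence of $I^{(+)}(0)$; and the remaining term is estimated by Cauchy--Schwarz, $\bigl|\int_x^\infty m'I^{(+)}\Phi\,dt\bigr|\le R(x)\bigl(\int_x^\infty m'\Phi^2\,dt\bigr)^{1/2}$, where $R(x):=\bigl(\int_x^\infty\frac{(I^{(+)}(t))^2}{r(t)\rho_1(t)}\,dt\bigr)^{1/2}\to0$ by hypothesis \eqref{2.18}; moreover an application of Jensen's inequality to the same probability measure, followed by Fubini, shows that the operator reproduces a bound of the form $\int_x^\infty m'\Phi^2\,dt\le C R(x)^2$. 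On a half-line $[a,\infty)$ with $a$ so large that $\delta(a)$ and $R(a)$ are small, successive approximations (starting from $(h,\Phi)=(1,0)$) then converge — the estimate is a two-step one, because the ``bad'' increment $\int_x^\infty m'I^{(+)}(\Phi_n-\Phi_{n-1})\,dt$ is controlled by $R(a)$ times the $L^2(m'\,dt)$-norm of $\Phi_n-\Phi_{n-1}$, which is itself $\lesssim R(a)\,\|(h_{n-1}-h_{n-2},\Phi_{n-1}-\Phi_{n-2})\|$ — and one checks directly that the resulting pair solves the differential system.

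This yields $h$ with $h\to1$, $\Phi\to0$; setting $\hat u:=u_1h$ (extended to a solution of \eqref{2.10} on all of $[0,\infty)$) gives a solution with $\hat u>0$ near $+\infty$, $\hat u/u_1\to1$ and \eqref{2.13}. The dominant solution is produced by reduction of order, $\hat v(x):=\hat u(x)\int_a^x\frac{dt}{r(t)\hat u(t)^2}$ (extended to $[0,a]$ by the equation): since $\hat u/u_1\to1$ and $\int_a^\infty\frac{dt}{ru_1^2}=\int_a^\infty m'(t)e^{m(t)}\,dt=+\infty$, a Stolz--Cesàro argument gives $\int_a^x\frac{dt}{r\hat u^2}\sim e^{m(x)}$, hence $\hat v/v_1\to1$ and $\hat u\hat v/\rho_1\to1$; consequently $\hat v'/\hat v-v_1'/v_1=(h'/h)+\bigl(\frac1{r\hat u\hat v}-\frac1{r\rho_1}\bigr)=o\bigl(\frac1{r\rho_1}\bigr)$, i.e.\ \eqref{2.14} holds, while the Wronskian identity $r(\hat u\hat v'-\hat u'\hat v)\equiv1$ shows $\{\hat u,\hat v\}$ is a FSS. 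Thus problem \eqref{2.12}--\eqref{2.14} is solvable as $x\to\infty$; the case $x\to-\infty$ is identical after the change of variable noted above.

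The main obstacle is the conditional convergence of $I^{(+)}(0)$: if $\int^\infty\rho_1|\Delta q|\,dt<\infty$ the scheme above reduces to the elementary fixed point behind Theorem~\ref{thm2.2.3}, but in the conditionally convergent case neither the forcing term $\int_x^\infty m'I^{(+)}\Phi\,dt$ nor the kernel is a priori absolutely integrable, and closing the estimates requires exactly the combination of the integration by parts (which replaces $\Delta q$ by the small quantity $I^{(+)}$), the probability-measure structure of $e^{m(x)-m(t)}m'(t)\,dt$, and the $L^2$-hypothesis \eqref{2.18} (entering through Cauchy--Schwarz and Jensen). The remaining points — that $\hat u$ can be kept positive where needed, and that the solutions extend to the whole half-line — are routine once the behaviour near $+\infty$ is settled.
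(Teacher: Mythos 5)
The paper does not prove Theorem~\ref{thm2.2.4}: it is imported verbatim from \cite{2} as a preliminary, so there is no in-paper argument to compare against. Your reconstruction is, in substance, the standard Hartman--Wintner asymptotic-integration proof (and, as far as one can tell, the one in \cite{2}): the substitution $\hat u=u_1h$, the Riccati-type first-order system in $(h,\Phi)$ with $\Phi=-r\rho_1h'$, the integration by parts replacing $\rho_1\Delta q$ by $I^{(+)}$, the observation that $e^{m(x)-m(t)}m'(t)\,dt$ is a probability measure (so those terms are controlled by $\sup_{t\ge x}|I^{(+)}(t)|\to0$), and the Cauchy--Schwarz/Jensen/Fubini loop through hypothesis \eqref{2.18} are exactly the ingredients needed in the conditionally convergent case; I checked that your fixed-point system is equivalent to the ODE system (differentiating it back recovers $h'=-m'\Phi$ and $\Phi'=m'\Phi-\rho_1(\Delta q)h$) and that the passage to $\hat v$ by reduction of order yields \eqref{2.12} and \eqref{2.14} as you claim. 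The write-up is a sketch at the level of the contraction estimates, but the scheme is sound and no step would fail.
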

 \subsection{Some facts on the correct solvability of equation \eqref{1.1}}

 \begin{thm}\label{thm2.3.1} \cite{6} Suppose that
 \begin{equation}\label{2.19}
 \int_{-\infty}^0\frac{dt}{r(t)}=\int_0^\infty\frac{dt}{r(t)}=\infty
 \end{equation}
 and equation \eqref{1.1} is correctly solvable in $L_p,$ $p\in[1,\infty).$
 Then
 \begin{equation}\label{2.20}
 \int_{-\infty}^x q(t)dt>0,\qquad \int_x^\infty q(t)dt>0\qquad\text{for every}\quad x\in\mathbb R,
 \end{equation}
 and therefore  equation \eqref{2.1} has a PFSS (see \thmref{thm2.1.2}).

 \end{thm}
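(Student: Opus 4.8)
By the change of variable $x\mapsto -x$ (which preserves \eqref{2.19}, preserves correct solvability in $L_p$, and interchanges the two integral conditions in \eqref{2.20}) it suffices to prove that $\int_x^\infty q(t)\,dt>0$ for every $x\in\mathbb R$; once \eqref{2.20} is known, taking $x=0$ gives $\int_{-\infty}^0q>0$ and $\int_0^\infty q>0$, so \thmref{thm2.1.2} supplies a PFSS. Assume then, for contradiction, that $\int_{x_1}^\infty q(t)\,dt=0$ for some $x_1$; since $q\ge0$ this means $q=0$ a.e.\ on $[x_1,\infty)$.

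The plan is to test \eqref{1.1} against $f=\chi_{[x_1,x_1+1]}\in L_p(\mathbb R)$ and let $y\in L_p(\mathbb R)$ be the solution granted by part~I) of correct solvability. On $[x_1+1,\infty)$ we have $q\equiv f\equiv0$, so $ry'\equiv c$ is constant and $y(x)=y(x_1+1)+c\int_{x_1+1}^x\frac{dt}{r(t)}$ for $x\ge x_1+1$; since $\int_{x_1+1}^\infty\frac{dt}{r(t)}=\infty$ by \eqref{2.19} while $y\in L_p$ with $p<\infty$, we must take $c=0$, and then also $y(x_1+1)=0$ (a nonzero constant is not in $L_p$ of a half-line). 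Thus $y\equiv0$ on $[x_1+1,\infty)$, so $y(x_1+1)=(ry')(x_1+1)=0$. On $[x_1,x_1+1]$ the equation reads $(ry')'=qy-f=-1$ a.e., and integrating back from $x_1+1$ yields $(ry')(x)=x_1+1-x$ and $y(x)=-\int_x^{x_1+1}\frac{x_1+1-s}{r(s)}\,ds$ for $x\in[x_1,x_1+1]$; in particular $(ry')(x_1)=1>0$ and $y(x_1)=-\kappa$ with $\kappa=\int_{x_1}^{x_1+1}\frac{x_1+1-s}{r(s)}\,ds>0$.

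Finally I would examine $y$ on $(-\infty,x_1]$, where $f\equiv0$, so $y$ solves the homogeneous (hence disconjugate) equation $(ry')'=qy$ with $q\ge0$. Put $\psi:=-y$, so $(r\psi')'=q\psi$, $\psi(x_1)=\kappa>0$ and $(r\psi')(x_1)=-1$. The key claim is that $\psi>0$ on all of $(-\infty,x_1]$: on any interval $(a,x_1]$ on which $\psi>0$ one has $(r\psi')'=q\psi\ge0$ there, so $r\psi'$ is nondecreasing and therefore $r\psi'\le(r\psi')(x_1)=-1<0$ on it; hence $\psi$ is strictly decreasing on $(a,x_1]$ and stays $\ge\psi(x_1)=\kappa>0$, which prevents $\psi$ from vanishing at the left endpoint, so by continuity $\psi$ stays positive all the way to $-\infty$. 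Consequently $r\psi'$ is nondecreasing on the whole half-line $(-\infty,x_1]$ with $r\psi'\le-1$, so $\psi'(x)\le-1/r(x)$ and $\psi(x)\ge\kappa+\int_x^{x_1}\frac{dt}{r(t)}\to+\infty$ as $x\to-\infty$ by \eqref{2.19}. Thus $y=-\psi$ is unbounded near $-\infty$ and cannot belong to $L_p(\mathbb R)$, contradicting the choice of $y$. This proves $\int_x^\infty q>0$ for all $x$, and with it the theorem.

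The one genuinely delicate step is the sign/monotonicity analysis of $\psi$ in the last paragraph: it is exactly there that disconjugacy (positivity of $q$, which makes $r\psi'$ monotone on intervals of positivity of $\psi$) is combined with the divergence of $\int^{\pm\infty}dt/r$ from \eqref{2.19}. Everything else is routine integration of \eqref{1.1} over $[x_1,x_1+1]$ and $[x_1+1,\infty)$, together with the observation that over a half-line a nonzero constant is not in $L_p$ when $p<\infty$.
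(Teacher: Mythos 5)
The paper does not prove Theorem~\ref{thm2.3.1} at all --- it is quoted from \cite{6} --- so there is nothing internal to compare against; what you have written is a self-contained proof, and it is correct. Your argument is the natural (and, as far as I can tell, essentially the standard) one for this necessity statement: if $q$ vanished a.e.\ on a half-line $[x_1,\infty)$, testing part~I) of correct solvability against $f=\chi_{[x_1,x_1+1]}$ forces the $L_p$-solution to vanish identically on $[x_1+1,\infty)$ (since $(ry')'=0$ there and $\int^{\infty}dt/r=\infty$ kills both the linear-in-$\int dt/r$ part and the constant), which pins down the Cauchy data $y(x_1)=-\kappa<0$, $(ry')(x_1)=1$; the disconjugacy argument on $(-\infty,x_1]$ (monotonicity of $r\psi'$ on intervals where $\psi>0$, combined with the divergence of $\int_{-\infty}dt/r$) then shows $|y|\ge\kappa>0$ on the whole left half-line, contradicting $y\in L_p$ for $p<\infty$. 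All the steps check out: the reduction by $x\mapsto-x$, the use of $q\ge0$ and $1/r\in L_1^{\loc}$ to get $\kappa>0$, the connectedness argument for positivity of $\psi$, and the final appeal to Theorem~\ref{thm2.1.2} via the case $x=0$ of \eqref{2.20}. The only place where you use more than you need is the last estimate: $\psi\ge\kappa$ on $(-\infty,x_1]$ already contradicts $y\in L_p$, so the growth $\psi(x)\ge\kappa+\int_x^{x_1}dt/r$ is not required (though it is correct and does use \eqref{2.19} in the way the hypothesis suggests).
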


 Throughout this section, we assume that \eqref{2.20} holds. This standing assumption does not appear in the statements. From \eqref{2.20} and \thmref{thm2.1.2} it follows that equation \eqref{2.1} has a PFSS $\{u(x),v(x)\},$ $x\in\mathbb R.$ Denote
 \begin{equation}\label{2.21}
 G(x,t)=\begin{cases} u(x)\cdot v(t),\ & x\ge t\\
 u(t)\cdot v(x),\ & x\le t\end{cases}    ,\qquad x,t\in\mathbb R;
 \end{equation}
 \begin{equation}\label{2.22}
 (Gf)(x)=\int_{-\infty}^\infty G(x,t)f(t)dt,\quad x\in\mathbb R,\  f\in L_p;
 \end{equation}
  \begin{equation}\label{2.23}
 (G_1f)(x)=u(x)\int_{-\infty}^x v(t)f(t)dt,\quad (G_2f)(x)=v(x)\int_x^\infty u(t)f(t)dt,\quad x\in\mathbb R,\  f\in L_p.
 \end{equation}

 Here $G(\cdot,\cdot)$ is the Green function, and $G:L_p\to L_p$, $p\in[1,\infty)$ is the Green operator of equation \eqref{1.1}.

 \begin{lem}\label{lem2.3.2} \cite{6} The following relations hold:
 \begin{equation}\label{2.24}
 (Gf)(x)=(G_1f)(x)+(G_1f)(x),\quad x\in\mathbb R, \ f\in L_p,\ p\in[1,\infty),
 \end{equation}
  \begin{equation}\label{2.25}
\frac{1}{2}\big(\|G_1\|_{p\to p}+\|G_2\|_{p\to p}\big)\le \|G\|_{p\to p}\le \|G_1\|_{p\to p}+\|G_2\|_{p\to p}, \ p\in[1,\infty).
 \end{equation}
 \end{lem}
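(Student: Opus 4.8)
The plan is to obtain the pointwise decomposition \eqref{2.24} by splitting the integral \eqref{2.22} at the diagonal $t=x$, and then to read off the two-sided norm bound \eqref{2.25} from \eqref{2.24} together with the sign conditions \eqref{2.2} defining a PFSS. First I would record that, by \eqref{2.2}, the three kernels $G(x,t)$, $u(x)v(t)$ and $u(t)v(x)$ are nonnegative; hence for $f\ge0$ the quantities $(G_1f)(x)$, $(G_2f)(x)$, $(Gf)(x)$ are well defined with values in $[0,\infty]$, and for general $f\in L_p$ one has the pointwise majorizations $|(G_if)(x)|\le(G_i|f|)(x)$, $i=1,2$, and $|(Gf)(x)|\le(G|f|)(x)$. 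Next, writing $\int_{-\infty}^\infty=\int_{-\infty}^x+\int_x^\infty$ in \eqref{2.22} and inserting $G(x,t)=u(x)v(t)$ for $t\le x$ and $G(x,t)=u(t)v(x)$ for $t\ge x$, one gets from \eqref{2.23}
\[
(Gf)(x)=u(x)\int_{-\infty}^x v(t)f(t)\,dt+v(x)\int_x^\infty u(t)f(t)\,dt=(G_1f)(x)+(G_2f)(x),
\]
which is \eqref{2.24} (the ``$(G_1f)(x)+(G_1f)(x)$'' printed there being a misprint for $(G_1f)(x)+(G_2f)(x)$); this manipulation is legitimate whenever the right-hand side is finite.

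The upper estimate in \eqref{2.25} is then immediate: by \eqref{2.24} and the triangle inequality in $L_p$, $\|Gf\|_p\le\|G_1f\|_p+\|G_2f\|_p\le\big(\|G_1\|_{p\to p}+\|G_2\|_{p\to p}\big)\|f\|_p$ for all $f\in L_p$, so $\|G\|_{p\to p}\le\|G_1\|_{p\to p}+\|G_2\|_{p\to p}$. For the lower estimate I would use positivity of the kernels twice. Since each $G_i$ has a nonnegative kernel, $|(G_if)(x)|\le(G_i|f|)(x)$, so $\|G_i\|_{p\to p}=\sup\{\|G_if\|_p:\ f\ge0,\ \|f\|_p\le1\}$. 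For such $f\ge0$, \eqref{2.24} and the nonnegativity of $G_2f$ (respectively $G_1f$) give $(G_1f)(x)\le(Gf)(x)$ and $(G_2f)(x)\le(Gf)(x)$ pointwise, hence $\|G_if\|_p\le\|Gf\|_p\le\|G\|_{p\to p}\|f\|_p$, i.e. $\|G_i\|_{p\to p}\le\|G\|_{p\to p}$ for $i=1,2$; adding the two inequalities and dividing by $2$ yields the left inequality in \eqref{2.25}.

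The argument is essentially bookkeeping, and the only point requiring care is the legitimacy of \eqref{2.24}, namely the finiteness for a.e.\ $x$ of $\int_{-\infty}^x v(t)f(t)\,dt$ and $\int_x^\infty u(t)f(t)\,dt$ when $f\in L_p$. Here I would invoke the monotonicity in \eqref{2.2}, so that $v(t)\le v(x)$ on $(-\infty,x]$ and $u(t)\le u(x)$ on $[x,\infty)$ (which already disposes of the case $p=1$), and for the remaining cases restrict attention to those $p$ with $\|G\|_{p\to p}<\infty$; when $\|G\|_{p\to p}=\infty$ both sides of \eqref{2.25} are infinite and the assertion is vacuous, while when $\|G\|_{p\to p}<\infty$ the function $Gf$ lies in $L_p$ and the splitting above identifies it a.e.\ with $G_1f+G_2f$, both summands being nonnegative when $f\ge0$ and therefore finite a.e.
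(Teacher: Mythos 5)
Your proof is correct and is, as far as one can tell, the intended argument: the paper itself omits the proof (deferring to \cite{6}, see also Lemma \ref{lem4.4}), remarking only that \eqref{4.14} is obvious and that the proof of the norm inequalities relies solely on \eqref{2.2} and \eqref{2.21} — which is exactly what your splitting of \eqref{2.22} at $t=x$ and the positivity/monotonicity argument use. You correctly identify the misprint in \eqref{2.24}, and your extra care about the a.e.\ finiteness of the two half-line integrals is a welcome (if routine) addition.
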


  \begin{thm}\label{thm2.3.3}  \cite{6}
Let $p\in[1,\infty)$, and suppose that \eqref{2.20} holds. Then equation \eqref{1.1} is correctly solvable in $L_p$ if and only if the operator $G:L_p\to L_p$ is bounded.
 \end{thm}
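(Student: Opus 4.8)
The plan is to identify the (candidate) solution of \eqref{1.1} with the Green potential $Gf$ and to show that conditions I) and II) of Definition \ref{defn1.1} are together equivalent to the $L_p$-boundedness of the map $f\mapsto Gf$ (the PFSS $\{u,v\}$ and the operators $G,G_1,G_2$ being available by \eqref{2.20} and \thmref{thm2.1.2}). Two elementary facts carry the argument. (a) Whenever the integrals in \eqref{2.23} converge, $Gf=G_1f+G_2f$ is a solution of \eqref{1.1} in the strong sense of this paper: from $(G_1f)'(x)=u'(x)\int_{-\infty}^x vf+u(x)v(x)f(x)$ and $(G_2f)'(x)=v'(x)\int_x^\infty uf-v(x)u(x)f(x)$ the products $u(x)v(x)f(x)$ cancel in the sum, so $r(Gf)'=ru'\int_{-\infty}^x vf+rv'\int_x^\infty uf$ is locally absolutely continuous; differentiating once more and using $(ru')'=qu$, $(rv')'=qv$ together with the Wronskian identity \eqref{2.3} gives $-(r(Gf)')'+qGf=f$ a.e. It is essential to differentiate $G_1f+G_2f$ as one block, since $r(G_1f)'$ by itself need not be absolutely continuous. (b) A nontrivial solution of \eqref{2.1} lies in neither $L_p[a,\infty)$ nor $L_p(-\infty,b]$ for any $a,b$: such a solution is $\alpha u+\beta v$, and since $u/v\to0$ at $+\infty$ while $v$ is positive and nondecreasing (so $v\ge v(a)>0$ on $[a,\infty)$), $\beta\ne0$ keeps $|\alpha u+\beta v|$ bounded below by a positive constant near $+\infty$; symmetrically, as $u$ is positive and nonincreasing and $v/u\to0$ at $-\infty$ (see \eqref{2.4}), $\alpha\ne0$ keeps $|\alpha u+\beta v|$ bounded below near $-\infty$.

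For the implication ``$G$ bounded $\Rightarrow$ \eqref{1.1} correctly solvable'': by \lemref{lem2.3.2}, boundedness of $G$ forces boundedness of $G_1$ and $G_2$, so $G_1|f|,G_2|f|\in L_p$ are finite a.e.; hence $\int_{-\infty}^x v|f|$ and $\int_x^\infty u|f|$ are finite for every $x$, $Gf$ is well defined, and by (a) it solves \eqref{1.1}, while $|Gf|\le G|f|$ gives $\|Gf\|_p\le\|G\|_{p\to p}\|f\|_p$. This yields I) and II). Uniqueness is (b): the difference of two $L_p$-solutions of \eqref{1.1} solves \eqref{2.1} and lies in $L_p(\mathbb R)$, hence vanishes.

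For the converse ``\eqref{1.1} correctly solvable $\Rightarrow$ $G$ bounded'': fix $f\in L_p$, put $f_N=f\cdot\mathbf 1_{[-N,N]}$, and let $y_N\in L_p$ be the unique solution of \eqref{1.1} with right-hand side $f_N$. Since $f_N$ has compact support, $Gf_N$ is well defined, is again a solution with right-hand side $f_N$ by (a), and coincides with a constant multiple of $u$ on $[N,\infty)$ and with a constant multiple of $v$ on $(-\infty,-N]$. Hence $y_N-Gf_N$ solves the homogeneous equation \eqref{2.1}, so $y_N-Gf_N=\alpha u+\beta v$; on $[N,\infty)$ this makes $y_N$ equal to $(\text{const})\,u+\alpha u+\beta v$, so $y_N\in L_p[N,\infty)$ forces $\beta=0$ by (b), while on $(-\infty,-N]$ it makes $y_N$ equal to $\alpha u+(\text{const})\,v$, so $\alpha=0$ by (b); thus $y_N=Gf_N$ and $\|Gf_N\|_p=\|y_N\|_p\le c(p)\|f_N\|_p\le c(p)\|f\|_p$. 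Applying this estimate with $|f|$ in place of $f$, the functions $G(|f|\cdot\mathbf 1_{[-N,N]})\ge0$ increase pointwise to $G|f|$, so by monotone convergence $\|G|f|\|_p\le c(p)\|f\|_p$; therefore $Gf$ is defined a.e., $|Gf|\le G|f|$, and $\|Gf\|_p\le c(p)\|f\|_p$, i.e.\ $\|G\|_{p\to p}\le c(p)<\infty$.

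The main obstacle is step (a): verifying that $Gf$ is a solution in the paper's strong sense (local absolute continuity of $r(Gf)'$), which is exactly what forces keeping $G_1f$ and $G_2f$ together so that the non-absolutely-continuous terms cancel. Once (a) and (b) are in place, the remainder is a routine combination of the a priori estimate \eqref{1.5}, the non-integrability of homogeneous solutions, and a monotone-convergence truncation.
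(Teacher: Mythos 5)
Your proof is correct, and it is worth noting that the paper itself offers no proof of this statement: Theorem~\ref{thm2.3.3} is quoted from \cite{6} as a preliminary, and even the paper's own generalization (Theorem~\ref{thm3.7}) is disposed of by referring back to Theorem~1.4 and Corollary~1.5 of \cite{6}. Your self-contained argument follows the route one would expect that reference to take: the Wronskian computation showing $-(r(Gf)')'+qGf=f$ with the non-absolutely-continuous terms $\pm r uvf$ cancelling between $G_1f$ and $G_2f$ (a real point, correctly flagged); uniqueness from the non-integrability of homogeneous solutions; and the compact-support truncation plus monotone convergence for the converse. Indeed, the paper's own proof of Theorem~\ref{thm3.6} uses exactly your dichotomy ($\beta\neq0$ is killed at $+\infty$ using $v\ge v(x_1)>0$ and $u/v\to0$; then $\alpha\neq0$ is killed at $-\infty$), which corroborates your step (b).

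One small imprecision: as literally stated, (b) --- ``a nontrivial solution lies in neither $L_p[a,\infty)$ nor $L_p(-\infty,b]$'' --- is false, since $u$ itself is positive and nonincreasing and may well belong to $L_p[a,\infty)$. What your argument actually establishes, and what every application uses, is the disjunctive version: $\beta\neq0$ excludes membership in $L_p[a,\infty)$ and $\alpha\neq0$ excludes membership in $L_p(-\infty,b]$, so a nontrivial solution cannot lie in $L_p(\mathbb R)$, and in the truncation step the two tails force $\beta=0$ and $\alpha=0$ separately. Rephrase (b) accordingly and the proof stands as written.
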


 \begin{cor}\label{cor2.3.4}\cite{6}
 Let $p\in [1,\infty)$ and suppose that $\|G\|_{p\to p}<\infty.$ Then for $f\in L_p,$ the solution $y\in L_p$ of \eqref{1.1} is of the form
 \begin{equation}\label{2.26}
 y(x)=(Gf)(x)=\int_{-\infty}^\infty G(x,t)f(t)dt,\quad x\in \mathbb R.
 \end{equation}
 \end{cor}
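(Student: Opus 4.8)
The plan is to combine Theorem~\ref{thm2.3.3} with the uniqueness built into Definition~\ref{defn1.1}, and then to check by a direct computation that $Gf$ is the solution in question.

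First I would dispose of the two easy halves. Since $\|G\|_{p\to p}<\infty$, Theorem~\ref{thm2.3.3} shows that \eqref{1.1} is correctly solvable in $L_p$, so by Definition~\ref{defn1.1} the given $f\in L_p$ admits a \emph{unique} solution $y\in L_p$ of \eqref{1.1}; and since $\|Gf\|_p\le\|G\|_{p\to p}\|f\|_p<\infty$, the function $Gf$ lies in $L_p$. Hence it only remains to verify that $Gf$ is a solution of \eqref{1.1} in the sense adopted in \secref{introduction}: the identity $y=Gf$, i.e.\ \eqref{2.26}, will then be forced by uniqueness.

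The substantive step is this verification. I would first note that $(Gf)(x)$ is well defined for \emph{every} $x$: writing $Gf=G_1f+G_2f$ as in \eqref{2.24} and using \eqref{2.25}, each of $G_1f,G_2f$ is in $L_p$ and hence finite somewhere; since $u>0$, finiteness of $(G_1f)(\cdot)$ at one point makes $\int_{-\infty}^{x}v(t)f(t)\,dt$ convergent there, and then --- $v$ being continuous and $f\in L_p\subset L_1^{\loc}$ --- convergent for all $x$, giving a locally absolutely continuous function of $x$ with a.e.\ derivative $v(x)f(x)$; symmetrically for $\int_x^{\infty}u(t)f(t)\,dt$. Differentiating the representation \eqref{2.23}, the two occurrences of $u(x)v(x)f(x)$ cancel and
\[
(Gf)'(x)=u'(x)\int_{-\infty}^{x}v(t)f(t)\,dt+v'(x)\int_{x}^{\infty}u(t)f(t)\,dt,\qquad x\in\mathbb R .
\]
Multiplying by $r$ and differentiating once more, using that $ru',rv'$ are absolutely continuous and that $(ru')'=qu$, $(rv')'=qv$ because $u,v$ solve \eqref{2.1}, one gets
\[
\bigl(r(x)(Gf)'(x)\bigr)'=q(x)(Gf)(x)-f(x)\,r(x)\bigl[v'(x)u(x)-u'(x)v(x)\bigr],
\]
and the bracket equals $1$ by the normalization \eqref{2.3}. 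Thus $-(r(Gf)')'+q\,(Gf)=f$ a.e., while $Gf$ and $r(Gf)'$ are absolutely continuous; so $Gf$ solves \eqref{1.1}, and $y=Gf$.

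The computation is routine once the Wronskian identity \eqref{2.3} is invoked; I expect the only points requiring genuine care to be the pointwise convergence of the two inner integrals and the legitimacy of differentiating under the integral sign together with the absolute continuity of $r(Gf)'$. These are handled by the monotonicity of $u$ and $v$ (one nonincreasing, one nondecreasing), their continuity, and the standing assumption $1/r,\,q\in L_1^{\loc}$, so no new ingredient beyond what is already recorded in the preliminaries is needed.
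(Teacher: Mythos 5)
Your argument is correct and is exactly the standard one that the paper delegates to \cite{6}: deduce correct solvability (hence existence and uniqueness of the $L_p$ solution) from Theorem~\ref{thm2.3.3}, check $Gf\in L_p$ from boundedness of $G$, and verify by direct differentiation — using that $u,v$ solve \eqref{2.1}, the cancellation of the $u(x)v(x)f(x)$ terms, and the Wronskian normalization \eqref{2.3} — that $Gf$ is a solution of \eqref{1.1} in the sense adopted in \secref{introduction}, so that \eqref{2.26} follows by uniqueness. The points you flag as needing care (absolute convergence of the inner integrals, local absolute continuity of $r(Gf)'$) are handled correctly from \eqref{2.24}--\eqref{2.25} and the standing assumptions, so no gap remains.
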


 \begin{lem}\label{lem2.3.5} \cite{3}
 Under condition \eqref{1.3}, for any $x\in \mathbb R$ each of the equations
 \begin{equation}\label{2.27}
 \int_{x-d}^x\frac{dt}{r(t)}\cdot \int_{x-d}^x q(t)dt=1,\qquad \int_x^{x+d}\frac{dt}{r(t)}\cdot\int_x^{x+d} q(t)dt=1
 \end{equation}
 in $d\ge0$ has a unique finite positive solution. Denote them by $d_1(x)$ and $d_2(x)$, respectively, and introduce the functions
 \begin{equation}\label{2.28}
 \varphi(x)=\int_{x-d_1(x)}^x\frac{dt}{r(t)},\quad \psi(x)=\int_x^{x+d_2(x)}\frac{dt}{r(t)},\quad x\in\mathbb R,
 \end{equation}
  \begin{equation}\label{2.29}
h(x)=\frac{\varphi(x)\cdot\psi(x)}{\varphi(x)+\psi(x)}\ \left(=\left(\int_{x-d_1(x)}^{x+d_2(x)}q(t)dt\right)^{-1}\right),
\quad x\in\mathbb R.
 \end{equation}
 Further, for any $x\in\mathbb R,$ the equation
  \begin{equation}\label{2.30}
\int_{x-d}^{x+d}\frac{dt}{r(t)h(t)}=1
 \end{equation}
  in $d\ge0$ has a unique finite positive solution. Denote it by $d(x),$ $x\in\mathbb R.$ The function $d(x)$ is continuous for $x\in\mathbb R.$
  \end{lem}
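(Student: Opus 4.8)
The plan is to establish, in order, (a) the existence and uniqueness of $d_1(x)$ and $d_2(x)$; (b) the well-definedness of $\varphi,\psi,h$ together with the parenthetical identity; (c) the existence, uniqueness and continuity of $d(x)$. For (a), set $F(x,d)=\int_{x-d}^x\frac{dt}{r(t)}\cdot\int_{x-d}^x q(t)\,dt$. Since $1/r>0$ a.e.\ and $q\ge0$, both factors are nondecreasing in $d$, so $F(x,\cdot)$ is nondecreasing and continuous (indefinite integrals of functions in $L_1^{\loc}$ are continuous), with $F(x,0)=0$ and $F(x,d)\to\infty$ as $d\to\infty$ by \eqref{1.3}; hence $F(x,d)=1$ is solvable. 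If it held on a nondegenerate interval of $d$'s, then on that interval $\int_{x-d}^x q\,dt=\big(\int_{x-d}^x\tfrac{dt}{r(t)}\big)^{-1}$ would be strictly decreasing and simultaneously nondecreasing---impossible. This gives uniqueness of $d_1(x)$; $d_2(x)$ is handled symmetrically.

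Assertion (b) is immediate: the defining equations give $\int_{x-d_1(x)}^x q=1/\varphi(x)$ and $\int_x^{x+d_2(x)}q=1/\psi(x)$, so $\int_{x-d_1(x)}^{x+d_2(x)}q=1/\varphi(x)+1/\psi(x)=1/h(x)$. Next I record that $d_1,d_2$ are continuous: $F$ is jointly continuous in $(x,d)$, and near $d=d_1(x)$ it is strictly increasing in $d$ (its value there is $1>0$, so the $q$-factor stays positive in a neighbourhood, and a strictly increasing factor times a positive nondecreasing factor is strictly increasing), so a standard squeezing argument applies; similarly for $d_2$. Consequently $\varphi$, $\psi$ and $h=\varphi\psi/(\varphi+\psi)$ are continuous and strictly positive, hence $1/(rh)\in L_1^{\loc}$ and $\Phi_x(d):=\int_{x-d}^{x+d}\frac{dt}{r(t)h(t)}$ is finite, jointly continuous in $(x,d)$, strictly increasing in $d$, with $\Phi_x(0)=0$.

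The heart of the matter is to show $\Phi_x(d)\to\infty$ as $d\to\infty$, i.e.\ $\int_x^\infty\frac{dt}{r(t)h(t)}=\infty$ (the left half-line is symmetric). Form the partition $t_0=x$, $t_{k+1}=t_k+d_2(t_k)$. Then $t_k\to\infty$: otherwise $t_k\uparrow T<\infty$, so $d_2(t_k)\to0$, $\psi(t_k)=\int_{t_k}^{t_{k+1}}\tfrac{dt}{r(t)}\to0$, and $\int_{t_k}^{t_{k+1}}q=1/\psi(t_k)\to\infty$, contradicting $\sum_k\int_{t_k}^{t_{k+1}}q=\int_x^T q<\infty$. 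A nesting argument shows $s\mapsto s-d_1(s)$ and $s\mapsto s+d_2(s)$ are nondecreasing (if $s<s'$ but, say, $s+d_2(s)>s'+d_2(s')$, the interval attached to $s'$ lies strictly inside that attached to $s$, so it has strictly smaller $1/r$-mass and no larger $q$-mass, contradicting that both ``area products'' equal $1$); a variant gives $t_k-d_1(t_k)\ge t_{k-1}$ (if not, $\int_{t_k-d_1(t_k)}^{t_{k-1}}q=1/\varphi(t_k)-1/\psi(t_{k-1})<0$, impossible). Write $a_k=\psi(t_k)$, so $\int_{t_k}^{t_{k+1}}\tfrac{dt}{r(t)}=a_k$. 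From $s+d_2(s)\le t_{k+2}$ on $[t_k,t_{k+1}]$ we get $\psi(s)\le a_k+a_{k+1}$ there, hence $\int_{t_k}^{t_{k+1}}\frac{ds}{r(s)\psi(s)}\ge\frac{a_k}{a_k+a_{k+1}}$; from $s-d_1(s)\ge t_{k-1}$ on $[t_k,t_{k+1}]$ we get $\varphi(s)\le a_{k-1}+a_k$ there, hence $\int_{t_k}^{t_{k+1}}\frac{ds}{r(s)\varphi(s)}\ge\frac{a_k}{a_{k-1}+a_k}$. Since $1/(rh)=1/(r\varphi)+1/(r\psi)$, adding gives, for $k\ge1$, $\int_{t_k}^{t_{k+1}}\frac{dt}{r(t)h(t)}\ge\frac{a_k}{a_k+a_{k+1}}+\frac{a_k}{a_{k-1}+a_k}$, and these telescope: $\sum_{k=1}^{N}\big(\frac{a_k}{a_k+a_{k+1}}+\frac{a_k}{a_{k-1}+a_k}\big)=N+\frac{a_N}{a_N+a_{N+1}}-\frac{a_0}{a_0+a_1}\ge N-1\to\infty$. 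Thus $\int_x^\infty\frac{dt}{r(t)h(t)}=\infty$.

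Finally, $\Phi_x$ is continuous and strictly increasing from $0$ to $\infty$, so $\Phi_x(d)=1$ has a unique finite positive root $d(x)$. Continuity of $d(\cdot)$ follows from joint continuity of $\Phi$: if $x_n\to x$, then $\{d(x_n)\}$ is bounded (otherwise, along a subsequence, $1=\Phi_{x_n}(d(x_n))\ge\Phi_{x_n}(D)\to\Phi_x(D)$ for each fixed $D$, forcing $\Phi_x(D)\le1$ for all $D$, contrary to $\Phi_x(D)\to\infty$), and any subsequential limit of $\{d(x_n)\}$ satisfies $\Phi_x(\cdot)=1$, hence equals $d(x)$ by uniqueness. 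I expect the main obstacle to be the divergence in the third paragraph: its crux is the two ``nesting'' monotonicity facts about $s-d_1(s)$ and $s+d_2(s)$ together with the observation that the $\varphi$- and $\psi$-contributions on consecutive blocks telescope, so that \eqref{1.3} enters only through the existence of $d_1,d_2$ everywhere.
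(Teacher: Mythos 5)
The paper does not prove this lemma at all --- it is quoted verbatim from reference [3] --- so there is no in-paper argument to compare yours against; I can only assess your proof on its own terms, and it checks out. Parts (a) and (b) are routine and correctly handled: monotonicity of the product $F(x,\cdot)$ makes the solution set of $F=1$ an interval, and your observation that on a nondegenerate such interval the $q$-factor would be forced to equal the strictly decreasing reciprocal of the $1/r$-factor settles uniqueness; the identity $1/h=1/\varphi+1/\psi$ and the continuity of $d_1,d_2,h$ are established correctly and are genuinely needed before \eqref{2.30} even makes sense. The heart of the matter, as you say, is $\int_x^{\infty}\frac{dt}{r(t)h(t)}=\infty$, and your argument for it is sound: the partition $t_{k+1}=t_k+d_2(t_k)$ exhausts $[x,\infty)$ (the accumulation argument via $1/\psi(t_k)\to\infty$ versus $q\in L_1^{\loc}$ is correct), the two nesting facts ($s\mapsto s\pm d_{1,2}(s)$ nondecreasing, and $t_k-d_1(t_k)\ge t_{k-1}$) follow cleanly from the ``equal area products'' comparison, and the resulting bound $\int_{t_k}^{t_{k+1}}\frac{dt}{r h}\ge\frac{a_k}{a_k+a_{k+1}}+\frac{a_k}{a_{k-1}+a_k}$ does telescope to $N-1$ as claimed. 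This is very much in the spirit of the Otelbaev-type covering constructions the paper itself uses (Definition \ref{defn2.5.1}, \lemref{lem2.5.2}), with the sequence $\{t_k\}$ playing the role of an $\mathbb R(x,d_2(\cdot))$-type covering; the telescoping of the $\varphi$- and $\psi$-contributions over consecutive blocks is the one genuinely nontrivial idea, and it is correct. The only stylistic caveat is that the divergence as $d\to\infty$ of the second product in \eqref{2.27} relies on reading \eqref{1.3} with $d\to-\infty$ under the usual orientation convention for $\int_{x-d}^x$; this is the intended reading and you use it implicitly, but it is worth a sentence.
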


  \begin{remark}\label{rem2.3.6}
  {}From \thmref{thm2.1.2} it follows that under condition \eqref{2.20}, equation \eqref{2.1} has a PFSS $\{u(x),v(x)\},$ $x\in\mathbb R,$ and therefore the function $\rho(x),$ $x\in\mathbb R$, is well-defined (see \eqref{2.6}).
  \end{remark}

  \begin{thm}\label{thm2.3.7}\cite{3}
  Under condition \eqref{1.3}, we have
  \begin{equation}\label{2.31}
  2^{-1}h(x)\le\rho(x)\le 2h(x),\quad x\in\mathbb R.
  \end{equation}
  \end{thm}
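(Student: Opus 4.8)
The plan is to split $1/\rho$ into two one-sided pieces via the Wronskian, and then to bound each piece \emph{from below} by elementary monotonicity comparisons (this yields the upper estimate for $\rho$) and \emph{from above} by a variational argument with a scale-adapted test function (this yields the lower estimate for $\rho$). Under the standing hypothesis \eqref{2.20}, equation \eqref{2.1} has a PFSS $\{u,v\}$, and dividing the Wronskian identity \eqref{2.3} by $u(x)v(x)=\rho(x)$ and using the signs in \eqref{2.2} gives
\[
\frac1{\rho(x)}=w(x)+\widetilde w(x),\qquad
w(x):=\frac{r(x)v'(x)}{v(x)}\ge0,\qquad
\widetilde w(x):=-\frac{r(x)u'(x)}{u(x)}\ge0 .
\]
From \eqref{2.1} with $q\ge0$ the functions $rv'$ and $ru'$ are nondecreasing, while $v$ is nondecreasing and $u$ nonincreasing. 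Finally, from \lemref{lem2.3.5} I will use $\int_{x-d_1(x)}^{x}q=1/\varphi(x)$, $\int_{x}^{x+d_2(x)}q=1/\psi(x)$, and $1/\varphi(x)+1/\psi(x)=1/h(x)$ (this is the only place hypothesis \eqref{1.3} enters).

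For the upper bound $\rho(x)\le 2h(x)$ I would estimate $\widetilde w(x)$ from below on $J_+:=[x,x+d_2(x)]$ in two complementary ways. Integrating $(ru')'=qu$ over $J_+$ and discarding the nonnegative endpoint term $-ru'$ at $x+d_2(x)$, then using that $u$ is nonincreasing, yields $-r(x)u'(x)\ge u(x+d_2(x))\int_{J_+}q=u(x+d_2(x))/\psi(x)$. On the other hand, since $-ru'$ is nonincreasing, $u(x)-u(x+d_2(x))=\int_{J_+}(-u')\le\big(-r(x)u'(x)\big)\,\psi(x)$. Writing $\gamma:=u(x+d_2(x))/u(x)\in(0,1)$, the two inequalities give $\widetilde w(x)\ge\gamma/\psi(x)$ and $\widetilde w(x)\ge(1-\gamma)/\psi(x)$, hence $\widetilde w(x)\ge 1/\big(2\psi(x)\big)$. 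The mirror computation on $[x-d_1(x),x]$, now using that $rv'$ and $v$ are nondecreasing, gives $w(x)\ge 1/\big(2\varphi(x)\big)$; adding, $1/\rho(x)\ge\tfrac12\big(1/\varphi(x)+1/\psi(x)\big)=1/\big(2h(x)\big)$.

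For the lower bound $\rho(x)\ge h(x)/2$ I would invoke the half-line minimum principle: for every Lipschitz $g$ on $[x,\infty)$ with compact support and $g(x)=1$,
\[
\int_x^\infty\!\big(r(g')^2+qg^2\big)\ \ge\ \widetilde w(x)
\]
(substitute $g=u\phi$; after one integration by parts the boundary term at $x$ is exactly $\widetilde w(x)$ and the leftover $\int_x^\infty ru^2(\phi')^2$ is nonnegative — this is the diagonal Green-function variational identity restricted to a half-line). Applying it to the tent profile adapted to the natural scale, $y^{*}(t)=\psi(x)^{-1}\!\int_t^{x+d_2(x)}ds/r(s)$ on $[x,x+d_2(x)]$ and $y^{*}\equiv0$ afterwards, one has $\int_{J_+}r(y^{*}{}')^2=1/\psi(x)$ and $\int_{J_+}q\,(y^{*})^2\le\int_{J_+}q=1/\psi(x)$ (since $0\le y^{*}\le1$), so $\widetilde w(x)\le 2/\psi(x)$. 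Symmetrically $w(x)\le 2/\varphi(x)$, and therefore $1/\rho(x)=w(x)+\widetilde w(x)\le 2\big(1/\varphi(x)+1/\psi(x)\big)=2/h(x)$.

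I expect the main obstacle to be the lower bound. The tempting ODE route — bounding $v(x+d_2(x))/v(x)$ and $u(x-d_1(x))/u(x)$ by absolute constants — does not go through by direct integration, because all the mass of $q$ may concentrate near one endpoint of the natural interval. Passing instead to the variational description of $w(x)$ and $\widetilde w(x)$ circumvents this, and the scale-adapted tent is exactly the competitor that delivers the stated constant $2$. The only routine items left are the integration-by-parts identity above (a standard one-dimensional Sturm--Liouville computation) and the equalities $\int_{x-d_1(x)}^{x}q=1/\varphi(x)$, $\int_{x}^{x+d_2(x)}q=1/\psi(x)$, which are read off directly from \lemref{lem2.3.5}.
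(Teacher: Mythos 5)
The paper does not actually prove Theorem~\ref{thm2.3.7}: it is imported from \cite{3} and quoted without argument, so there is no in-text proof to compare against. Your proof is correct and self-contained. The decomposition $1/\rho=rv'/v-ru'/u$ is just the Wronskian \eqref{2.3} divided by $\rho$ (equivalently, the sum of the two identities in \eqref{2.8}), and both halves check out. For the upper bound on $\rho$: integrating $(ru')'=qu$ over $[x,x+d_2(x)]$ and dropping the nonnegative endpoint term gives $\widetilde w(x)\psi(x)\ge\gamma$, while the monotonicity of $-ru'$ gives $\widetilde w(x)\psi(x)\ge 1-\gamma$, so $\widetilde w\ge 1/(2\psi)$; with the mirror estimate this yields $1/\rho\ge\tfrac12(1/\varphi+1/\psi)=1/(2h)$, the definitional identities $\int_{x-d_1(x)}^{x}q=1/\varphi(x)$ and $\int_{x}^{x+d_2(x)}q=1/\psi(x)$ being immediate from \eqref{2.27}--\eqref{2.29}. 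For the lower bound, the ground-state substitution $g=u\phi$ gives exactly $\int_x^\infty\bigl(r(g')^2+qg^2\bigr)=\widetilde w(x)+\int_x^\infty ru^2(\phi')^2$, and the scale-adapted tent delivers $\widetilde w\le 2/\psi$ because $\int r(y^{*\prime})^2=1/\psi$ and $\int q(y^*)^2\le\int q=1/\psi$; adding the mirror bound gives $\rho\ge h/2$. Two points worth tightening in a written version: (i) the tent has $y^{*\prime}=-1/(\psi(x)r)$, which need not be bounded when $r$ is not bounded below, so the minimum principle should be stated for absolutely continuous competitors of finite energy rather than Lipschitz ones (the integration by parts is unaffected, since $ru'$, $u$, $\phi$ are all absolutely continuous and the energy of $y^*$ is finite); (ii) condition \eqref{1.3} enters not only through those two integral identities but, more basically, through \lemref{lem2.3.5}, which guarantees that $d_1(x)$ and $d_2(x)$ are finite so that $\varphi$, $\psi$ and $h$ are defined at all. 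Compared with the route of \cite{3} via the Davies--Harrell formulas \eqref{2.7} and the representations \eqref{4.2}, your argument is more elementary in that it never uses \eqref{2.7}; the constant $2$ falls out of two symmetric one-sided estimates, which is a pleasant feature.
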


  \begin{remark}\label{rem2.3.8}
  Two-sided, sharp by order, a priori estimates for the function $\rho(x),$ $x\in\mathbb R$, were first obtained in \cite{17} (under some additional requirements for $r$ and $q$) by M.O.~Otelbaev. Therefore, all estimates of type \eqref{2.31} are called Otelbaev inequalities. Note that in \cite{17} Otelbaev used a more complicated auxiliary function than $h(x),$ $x\in\mathbb R.$
  \end{remark}

  \begin{thm}\label{thm2.3.9} \cite[\S4 (Theorem 4.1)]{6}
  Let $p\in(1,\infty),$ and suppose that \eqref{1.3} holds.
Then equation \eqref{1.1} is correctly solvable in $L_p$ if and only if $B<\infty$ where
 \begin{equation}\label{2.32}
  B=\sup_{x\in\mathbb R}(h(x)d(x)).
  \end{equation}
\end{thm}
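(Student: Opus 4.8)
The plan is to reduce correct solvability of \eqref{1.1} in $L_p$ to the boundedness in $L_p$ of two one-sided weighted Hardy operators, characterise the latter by the classical Muckenhoupt criterion, and then convert the resulting Muckenhoupt constants into the quantity $B$ via the Davies--Harrell formulas and the Otelbaev inequality. By \thmref{thm2.3.3}, equation \eqref{1.1} is correctly solvable in $L_p$ iff $\|G\|_{p\to p}<\infty$, and by \eqref{2.25} this is equivalent to $\|G_1\|_{p\to p}<\infty$ and $\|G_2\|_{p\to p}<\infty$ simultaneously. In $G_1f(x)=u(x)\int_{-\infty}^x v(t)f(t)\,dt$ substitute $g=vf$ (admissible since $v>0$): then $G_1f(x)=u(x)\int_{-\infty}^x g(t)\,dt$ and $\|f\|_p^p=\int_{\mathbb R}|g(t)|^pv(t)^{-p}\,dt$, so $\|G_1\|_{p\to p}<\infty$ is precisely the weighted Hardy inequality $\int_{\mathbb R}u(x)^p\big|\int_{-\infty}^x g(t)\,dt\big|^p\,dx\le c^p\int_{\mathbb R}|g(t)|^p v(t)^{-p}\,dt$; likewise $\|G_2\|_{p\to p}<\infty$ (after the substitution $g=uf$) is the conjugate Hardy inequality with $\int_x^\infty g$ and weights $v^p,u^{-p}$. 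By the classical criterion for weighted Hardy inequalities (Muckenhoupt, Tomaselli, Talenti), valid for every $p\in(1,\infty)$, these are equivalent, respectively, to $A_1<\infty$ and $A_2<\infty$, where
\begin{equation}\label{pps1}
A_1=\sup_{x\in\mathbb R}\Big(\int_x^\infty u^p\Big)^{1/p}\Big(\int_{-\infty}^x v^{p'}\Big)^{1/p'},\qquad A_2=\sup_{x\in\mathbb R}\Big(\int_{-\infty}^x v^p\Big)^{1/p}\Big(\int_x^\infty u^{p'}\Big)^{1/p'}.
\end{equation}
Thus everything reduces to showing that $A_1$ and $A_2$ are both finite exactly when $B<\infty$ (in fact one expects $A_1\asymp A_2\asymp B$).

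Next I would estimate the one-sided integrals in \eqref{pps1}. Write $S(x)=\int_{x_0}^x\frac{d\xi}{r(\xi)\rho(\xi)}$; by \thmref{thm2.1.3}, $u=\rho^{1/2}e^{-S/2}$, $v=\rho^{1/2}e^{S/2}$, $uv=\rho$, and $S$ is strictly increasing with $S(\pm\infty)=\pm\infty$ by \eqref{2.4}. From \eqref{2.9}, $r|\rho'|<1$, hence $|(\ln\rho)'|<S'$ a.e., so $\rho$ changes by at most a fixed factor on any interval on which $S$ varies by at most $1$. Using this slow variation, the monotonicity of $u$ (decreasing) and of $v$ (increasing), the exponential factors $e^{\mp S/2}$, and --- crucially --- condition \eqref{1.3} (which is what guarantees that the relevant tails converge and are dominated by the contribution of finitely many consecutive ``unit-$S$-mass'' intervals adjacent to $x$), one obtains sharp two-sided bounds for each of the four integrals in \eqref{pps1}: each is comparable to its integrand at $x$ times a local length scale determined by $S$ to the right of $x$ (for the integrals $\int_x^\infty$) or to the left of $x$ (for $\int_{-\infty}^x$).

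By the Otelbaev inequality \eqref{2.31}, $\rho\asymp h$, so $S'=(r\rho)^{-1}\asymp(rh)^{-1}$ and the local scales from the previous step are governed by the normalisation $\int(rh)^{-1}\asymp 1$. Comparing this with the defining equation \eqref{2.30} of $d(x)$ and invoking the calculus of the auxiliary functions $d_1,d_2,\varphi,\psi,h,d$ of \lemref{lem2.3.5} developed in \cite{3}, one identifies those local scales with $d(x)$ up to multiplicative constants. Substituting into \eqref{pps1} and using $uv=\rho\asymp h$,
\begin{equation}\label{pps2}
A_1\asymp\sup_{x\in\mathbb R}\big(u(x)^p d(x)\big)^{1/p}\big(v(x)^{p'} d(x)\big)^{1/p'}=\sup_{x\in\mathbb R}u(x)v(x)d(x)=\sup_{x\in\mathbb R}\rho(x)d(x)\asymp B,
\end{equation}
and, by the same computation, $A_2\asymp B$. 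Hence \eqref{1.1} is correctly solvable in $L_p$ iff $B<\infty$.

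The reduction (Hardy operators plus Muckenhoupt) is routine; the heart of the argument is the sharp two-sided control of $\int_x^\infty u^p$, $\int_{-\infty}^x v^{p'}$ and their $p'$-analogues --- in particular, proving that these integrals are \emph{finite} and are no larger (up to a constant) than the claimed local quantities. This is precisely where condition \eqref{1.3} must be used in full strength; the slow variation of $\rho$ alone does not suffice. A secondary technical point is the uniform comparison of the one-sided local scales attached to $u$ and $v$ with the symmetric scale $d(x)$, which relies on the machinery for the Otelbaev-type functions established in \cite{3}.
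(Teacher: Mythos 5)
First, a point of orientation: the paper itself does not prove \thmref{thm2.3.9} --- it is quoted verbatim in the Preliminaries from \cite[Theorem 4.1]{6} --- so the only internal comparison available is with the proof in \S4 of the closely analogous \thmref{thm3.12} (where $h(x)d(x)$ is replaced by $\rho(x)s(x)$ and then related back to $B$ via Otelbaev's inequality \eqref{2.31}). That proof follows exactly the strategy you outline: reduction to $\|G\|_{p\to p}$ via \thmref{thm2.3.3}, the splitting $G=G_1+G_2$ of \lemref{lem2.3.2}, the weighted Hardy (Muckenhoupt-type) criterion of \thmref{thm2.4.2}, a lower bound obtained by localizing to one adjacent interval and using \lemref{lem3.11}, and an upper bound obtained from Otelbaev coverings plus a geometric series. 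So your route is the paper's route.

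There is, however, one step that would fail as written: the claimed pointwise two-sided comparisons of the \emph{individual} integrals, $\int_x^\infty u^p\asymp u(x)^p\,d(x)$ and $\int_{-\infty}^x v^{p'}\asymp v(x)^{p'}d(x)$, with uniform constants. The lower bounds are fine, but the upper bounds are not available: writing $S(t)=\int_{x}^t(r\rho)^{-1}d\xi$, on the $n$-th covering interval one has the decay factor $e^{-p(n-1)/2}$ from $u=\rho^{1/2}e^{-S/2}$, while \eqref{2.9} only gives $|(\ln\rho)'|\le S'$, hence $\rho$ may grow like $e^{\,n-1}$ along the covering; these exactly cancel (consistently with the trivial bound $u(t)\le u(x)$ for $t\ge x$), so $\int_x^\infty u^p$ can exceed $u(x)^p d(x)$ by an unbounded factor, and slow variation of $\rho$ gives you nothing beyond monotonicity of $u$. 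What is actually true, and what the proof of \eqref{4.21} in \S4 (and (3.30)--(3.35) of \cite{6}) establishes, is an estimate on the \emph{product}: one keeps the two factors together, distributes the exponential weight so that the $n$-th covering interval contributes $\rho(x_n)d(x_n)e^{-c(p)(n-1)}\le B\,e^{-c(p)(n-1)}$ to the first power, and sums the geometric series to get $A_1,A_2\le c(p)B$; equivalently, the best pointwise upper bound one extracts is $\int_x^\infty u^p\le c\,B\,u(x)^p/\rho(x)$, which is weaker than $c\,u(x)^pd(x)$ unless $\rho(x)d(x)$ is already comparable to $B$. With that correction (and the routine verification, via \eqref{2.31}, that $\sup_x h(x)d(x)<\infty$ iff $\sup_x\rho(x)s(x)<\infty$) your sketch closes. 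A secondary inaccuracy: condition \eqref{1.3} is not what makes the tail integrals converge --- that comes from the covering argument together with the hypothesis $B<\infty$ --- its role is to guarantee \eqref{2.20} (hence the existence of the PFSS), the well-definedness of $h$ and $d$ in \lemref{lem2.3.5}, and the Otelbaev inequality $\rho\asymp h$.
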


\subsection{Two theorems on integral operators}

\begin{thm}\label{thm2.4.1}\cite[Ch. V, \S2, no.5]{13}
Let $-\infty\le a<b\le\infty,$ let $K(s,t)$ be a continuous function in $s,t\in[a,b],$ and let $K$ be an integral operator of the form
\begin{equation}\label{2.33}
(Kf)(t)=\int_a^bK(s,t,)f(s)ds,\ t\in[a,b].
\end{equation}
Then
\begin{equation}\label{2.34}
\|K\|_{L_1(a,b)\to L_1(a,b)}=\sup_{s\in[a,b]}\int_a^b|K(s,t)|dt.
\end{equation}
\end{thm}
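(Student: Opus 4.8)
The statement to be proved is Theorem~\ref{thm2.4.1}, the classical formula for the $L_1 \to L_1$ operator norm of an integral operator: namely $\|K\|_{L_1(a,b)\to L_1(a,b)} = \sup_{s\in[a,b]}\int_a^b |K(s,t)|\,dt$. This is a standard result; my plan is to prove the two inequalities separately, as is usual for an operator-norm identity. Since the paper attributes this to \cite[Ch.~V, \S2, no.~5]{13} and it is a textbook fact, I expect the proof to be short, with the only subtlety being the passage to the supremum when the interval is unbounded or $K$ is merely continuous rather than bounded.

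\emph{Upper bound.} First I would show $\|K\|_{L_1\to L_1}\le M$, where $M:=\sup_{s\in[a,b]}\int_a^b|K(s,t)|\,dt$ (assuming $M<\infty$; otherwise there is nothing to prove). Given $f\in L_1(a,b)$, apply Tonelli's theorem to the nonnegative measurable function $(s,t)\mapsto |K(s,t)|\,|f(s)|$:
\begin{equation}
\|Kf\|_{L_1} = \int_a^b\Big|\int_a^b K(s,t)f(s)\,ds\Big|\,dt \le \int_a^b\!\!\int_a^b |K(s,t)|\,|f(s)|\,ds\,dt = \int_a^b |f(s)|\Big(\int_a^b|K(s,t)|\,dt\Big)ds \le M\,\|f\|_{L_1}. \nonumber
\end{equation}
This gives $\|K\|_{L_1\to L_1}\le M$.

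\emph{Lower bound.} For the reverse inequality I would test the operator against approximate point masses. Fix $\ve>0$ and choose $s_0\in[a,b]$ with $\int_a^b|K(s_0,t)|\,dt > M-\ve$ (or, if $M=\infty$, larger than any prescribed constant). For small $\dl>0$ let $f_\dl = \dl^{-1}\mathbf 1_{I_\dl}$ where $I_\dl$ is an interval of length $\dl$ around $s_0$ inside $[a,b]$; then $\|f_\dl\|_{L_1}=1$, and
\begin{equation}
(Kf_\dl)(t) = \frac{1}{\dl}\int_{I_\dl}K(s,t)\,ds \xrightarrow[\dl\to0]{} K(s_0,t)\nonumber
\end{equation}
uniformly in $t$ on compact subsets, by continuity of $K$. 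Hence $\liminf_{\dl\to0}\|Kf_\dl\|_{L_1}\ge \int_a^b|K(s_0,t)|\,dt > M-\ve$ (using Fatou's lemma to handle the case $b=\infty$, restricting first to a large compact subinterval where the convergence is uniform). Letting $\ve\to0$ yields $\|K\|_{L_1\to L_1}\ge M$.

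\emph{Main obstacle.} The one point requiring care is the lower-bound argument when $(a,b)$ is unbounded or when $M=\infty$: the convergence $(Kf_\dl)(t)\to K(s_0,t)$ is only locally uniform, so I would first fix a compact $[\a,\b]\subset(a,b)$, obtain $\liminf_\dl \int_\a^\b |(Kf_\dl)(t)|\,dt \ge \int_\a^\b |K(s_0,t)|\,dt$, and then exhaust $(a,b)$ by such compacts, noting that $\int_\a^\b |K(s_0,t)|\,dt$ can be made arbitrarily close to $\int_a^b|K(s_0,t)|\,dt$. All other steps are routine applications of Tonelli/Fatou. (Alternatively, since the result is quoted verbatim from \cite{13}, one may simply cite it; but the self-contained argument above is the natural fallback.)
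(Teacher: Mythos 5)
Your argument is correct, but there is nothing in the paper to compare it with: Theorem~\ref{thm2.4.1} is stated in the Preliminaries purely as a quoted textbook fact from Kantorovich--Akilov \cite[Ch.~V, \S2, no.~5]{13}, and the authors give no proof of it. Your self-contained two-sided argument is the standard one and is sound: Tonelli gives the upper bound $\|K\|_{L_1\to L_1}\le M$, and testing against normalized indicators $f_\dl=\dl^{-1}\mathbf 1_{I_\dl}$ centered at a near-maximizing $s_0$, combined with continuity of $K$ and Fatou's lemma (or exhaustion by compact subintervals when $(a,b)$ is unbounded), gives the lower bound; the cases $M=\iy$ and $s_0$ at a finite endpoint are handled correctly by your remarks.
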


\begin{thm}\label{thm2.4.2}\cite{14}
Let $\mu(\cdot)$ and $\theta(\cdot)$ be positive continuous functions in $\mathbb R$, and let $K^{(+)}(K^{(-)}$ be an integral operator of the form
\begin{equation}\label{2.35}
(K^{(+)}f)(t)=\mu(t)\int_t^\infty\theta(\xi)f(\xi)d\xi,\ t\in\mathbb R;\quad ((K^{(-)}f)(t)=\mu(t)\int_{-\infty}^t\theta(\xi) f(\xi)d\xi,\ t\in\mathbb R).
\end{equation}
Then for $p\in(1,\infty),$ the operator $K^{(+)}:L_p\to L_p$ $((K^{(-)}:L_p\to L_p)$ is bounded if and only if $H_p^{(+)}<\infty$ $(H_p^{(-)}<\infty).$ Here
\begin{equation}\label{2.36}
H_p^{(+)}=\sup_{x\in\mathbb R}H_p^{(+)}(x)\quad (H_p^{(-)}=\sup_{x\in\mathbb R}H_p^{(-)}(x)),
\end{equation}
where
\begin{gather}
H_p^{(+)}(x) =\left(\int_{-\infty}^x\mu(t)^pdt\right)^{1/p}
\left(\int_x^\infty\theta(t)
^{p'}dt\right)^{1/p'}
\label{2.37}\\
 \left(H_p^{(-)}(x)=\left(\int_{-\infty}^x\theta(t)^{p'}dt\right)^{1/p'}\left(\int_x^\infty\mu(t)^{p}dt\right)^{1/p}
\right).\nonumber
\end{gather}
In addition, we have the inequalities
\begin{equation}\label{2.38}
H_p^{(+)}\le\|K^{(+)}\|_{p\to p}\le(p)^{1/p}(p')^{1/p'}H_p^{(+)}\quad (H_p^{(-)}\le\|K^{(-)}\|_{p\to p}\le(p)^{1/p}(p')^{1/p} H_p^{(-)}).
\end{equation}
\end{thm}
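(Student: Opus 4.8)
The plan is to prove the two-sided estimate \eqref{2.38}; the stated boundedness criterion is an immediate consequence of it (boundedness forces $H_p^{(+)}<\infty$ by the left inequality, and $H_p^{(+)}<\infty$ forces boundedness by the right one), while the assertions for $K^{(-)}$ follow from those for $K^{(+)}$ via the change of variable $x\mapsto -x$, which interchanges the pair $K^{(+)},K^{(-)}$ and the pair $H_p^{(+)},H_p^{(-)}$. Fix $p\in(1,\infty)$ throughout, and note it suffices to treat $0\le f\in L_p$, since $|K^{(+)}f|\le K^{(+)}|f|$ and $\||f|\|_p=\|f\|_p$.

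\emph{Lower bound} $H_p^{(+)}\le\|K^{(+)}\|_{p\to p}$. I would argue by testing. Fix $x\in\mathbb R$ and $N>x$ and put $f_N(\xi)=\theta(\xi)^{p'-1}\chi_{(x,N)}(\xi)$; since $(p'-1)p=p'$ and $\theta$ is continuous, $f_N\in L_p$ with $\|f_N\|_p^p=\int_x^N\theta^{p'}$. For $t\le x$ one has $(K^{(+)}f_N)(t)=\mu(t)\int_x^N\theta^{p'}$, so
\begin{equation*}
\|K^{(+)}f_N\|_p^p\ge\left(\int_{-\infty}^x\mu(t)^p\,dt\right)\left(\int_x^N\theta(t)^{p'}\,dt\right)^p.
\end{equation*}
Dividing by $\|f_N\|_p^p$ and using $p-1=p/p'$, then letting $N\to\infty$ and taking the supremum over $x$, gives $\|K^{(+)}\|_{p\to p}\ge H_p^{(+)}$.

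\emph{Upper bound.} Assume $H:=H_p^{(+)}<\infty$, and set $g=\theta^{p'}$, $G(\xi)=\int_\xi^\infty g$, $M(x)=\int_{-\infty}^x\mu^p$, so the hypothesis reads $M(x)\le H^p\,G(x)^{-p/p'}$ for all $x$. Fix $\varepsilon\in(0,1/p')$. For $0\le f\in L_p$, write $\theta f=(\theta G^{-\varepsilon})(fG^{\varepsilon})$ and apply H\"older with exponents $p',p$ on $(t,\infty)$:
\begin{equation*}
\left(\int_t^\infty\theta f\right)^p\le\left(\int_t^\infty\theta^{p'}G^{-\varepsilon p'}\right)^{p/p'}\left(\int_t^\infty f^pG^{\varepsilon p}\right).
\end{equation*}
Since $\theta^{p'}G^{-\varepsilon p'}=-G'G^{-\varepsilon p'}$ and $\varepsilon p'<1$, the first factor equals $(1-\varepsilon p')^{-p/p'}G(t)^{(1-\varepsilon p')p/p'}$. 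Multiplying by $\mu(t)^p$, integrating in $t$, and interchanging the order of integration, one obtains
\begin{equation*}
\|K^{(+)}f\|_p^p\le(1-\varepsilon p')^{-p/p'}\int_{-\infty}^\infty f(\xi)^pG(\xi)^{\varepsilon p}\left(\int_{-\infty}^\xi\mu(t)^pG(t)^{(1-\varepsilon p')p/p'}\,dt\right)d\xi.
\end{equation*}
The inner integral is then bounded, via integration by parts with $dM=\mu^p\,dt$ and the estimate $M\le H^pG^{-p/p'}$ (together with $-G'=g\ge0$), by $(\varepsilon p')^{-1}H^pG(\xi)^{-\varepsilon p}$: the powers of $G$ collapse because $(1-\varepsilon p')p/p'-1-p/p'=-\varepsilon p-1$, and the boundary term at $-\infty$ either vanishes (when $G$ is unbounded there, which by $H<\infty$ forces $M(-\infty)=0$) or has the favorable sign. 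Substituting back cancels $G(\xi)^{\varepsilon p}$ and yields $\|K^{(+)}f\|_p^p\le(1-\varepsilon p')^{-p/p'}(\varepsilon p')^{-1}H^p\|f\|_p^p$. Finally, minimizing $(1-\varepsilon p')^{-p/p'}(\varepsilon p')^{-1}$ over $\varepsilon\in(0,1/p')$ — the minimum occurs at $\varepsilon p'=1/p$, i.e.\ $\varepsilon=1/(pp')$, and equals $p\,(p')^{p/p'}$ — gives $\|K^{(+)}\|_{p\to p}\le p^{1/p}(p')^{1/p'}H$.

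\emph{Main obstacle.} The only genuine work is the bookkeeping in the upper bound: carrying the powers of $G$ through the H\"older step, Fubini, and the integration by parts so that the factor $G(\xi)^{\varepsilon p}$ is exactly absorbed, and verifying that the boundary contribution at $-\infty$ is harmless under $H<\infty$. Everything else is routine. If the sharp constant is not needed, any fixed $\varepsilon\in(0,1/p')$ produces a finite constant depending only on $p$, and the optimization step can be dropped.
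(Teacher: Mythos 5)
The paper does not actually prove this statement: Theorem~\ref{thm2.4.2} is quoted from Kufner--Persson \cite{14} as a known form of the weighted Hardy inequality, so there is no in-paper argument to compare yours against. Your proof is a correct, self-contained rendition of the standard Muckenhoupt/Tomaselli-type argument. The test-function lower bound is right, including the exponent bookkeeping $(p'-1)p=p'$ and $p-1=p/p'$; and the upper bound via H\"older with the auxiliary weight $G^{\pm\varepsilon}$, Fubini, and integration by parts against $dM$ closes correctly: the inner integral is indeed at most $(\varepsilon p')^{-1}H^p G(\xi)^{-\varepsilon p}$ because $\varepsilon p+(1-\varepsilon p')p/p'=p/p'$, and the optimization at $\varepsilon p'=1/p$ yields exactly the constant $p^{1/p}(p')^{1/p'}$ of \eqref{2.38} (the exponent $(p')^{1/p}$ printed there for $K^{(-)}$ is a typo in the paper). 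Two small points are worth making explicit. First, before introducing $G^{-\varepsilon}$ you should observe that $H<\infty$ together with $\mu>0$ continuous forces $G(t)<\infty$ for every $t$ (otherwise $M\equiv 0$ on a half-line), so the weight is legitimate; likewise $G>0$ since $\theta>0$. Second, the boundary term at $-\infty$ in the integration by parts always vanishes: $M(-\infty)=0$ by definition disposes of the case $G(-\infty)<\infty$, and the bound $M\le H^p G^{-p/p'}$ gives $MG^{\lambda}\le H^pG^{-\varepsilon p}\to 0$ when $G(-\infty)=\infty$; so the ``favorable sign'' alternative you invoke is not needed. The reflection $x\mapsto -x$ reducing $K^{(-)}$ to $K^{(+)}$ is fine.
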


\subsection{On Otelbaev's coverings of the real semi-axis}

\begin{defn}\label{defn2.5.1} \cite{3,5,15}
Suppose we are given $x\in\mathbb R,$ a positive continuous function $\varkappa(x),$ $x\in\mathbb R,$ and a sequence $\{x_n\}_{n\in N'},$ $N'=\{\pm1,\pm 2,\dots\}\, .$ Consider the segments $\Delta_n=x_n\pm\varkappa(x_n),$ $n\in N'.$ We say that the sequence of segments $\{\Delta_n\}_{n=1}^\infty$ $(\{\Delta_n\}_{n=-\infty}^{-1})$ forms an $\mathbb R(x,\varkappa(\cdot))$-covering of $[x,\infty)$ (resp. an $\mathbb R(x,\varkappa(\cdot))$-covering of $(-\infty,x])$ if the following conditions hold:
\begin{enumerate}
\item[1)] $\Delta_n^{(+)}=\Delta_{n+1}^{(-)}$ for $n\ge 1$ (resp. $\Delta_{n-1}^{(+)}=\Delta_n^{(-)}$ for $n\le -1);$
    \item[2)] $\Delta_1^{(-)} =x$\quad (resp. $\Delta_{-1}^{(+)} =x);$
     \item[3)] $\cupl\limits_{n\ge 1}^\infty\Delta_n=[x,\infty)$\quad (resp.  $\cupl\limits_{n\le -1}^\infty\Delta_n=(-\infty,x]).$
        \end{enumerate}
\end{defn}

\begin{lem}\label{lem2.5.2}\cite{5,15}
    Suppose that a positive continuous function $\varkappa(x),$ $x\in\mathbb R,$ satisfies the relations
    \begin{equation}\label{2.39}
    \lim_{x\to\infty}(x-\varkappa(x))=\infty,\quad \lim_{x\to-\infty}(x+\varkappa(x))=-\infty.
    \end{equation}
    Then for any $x\in\mathbb R$ there exists an $\mathbb R(x,\varkappa(\cdot))$-covering of $[x,\infty)$ (resp. an $\mathbb R(x,\varkappa(\cdot))$-covering of $(-\infty,x]).$
    \end{lem}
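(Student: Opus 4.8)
The plan is to construct the covering of $[x,\infty)$ by a greedy, step‑by‑step procedure: lay down consecutive segments $\Delta_n$ end to end starting at $x$, and then verify that their right endpoints escape to $+\infty$, which is precisely what condition 3) of Definition~\ref{defn2.5.1} requires. The covering of $(-\infty,x]$ will be handled by the mirror‑image argument.

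First I would set $a_1=x$ and run the following recursion. Given $a_n$, consider the continuous function $g(t)=t-\varkappa(t)$ on $\mathbb R$. Since $\varkappa>0$ we have $g(a_n)=a_n-\varkappa(a_n)<a_n$, while the first relation in \eqref{2.39} gives $g(t)\to\infty$ as $t\to\infty$; hence, choosing $T>a_n$ with $g(T)>a_n$ and applying the intermediate value theorem on $[a_n,T]$, there is a point $x_n>a_n$ with $g(x_n)=a_n$, i.e. $x_n-\varkappa(x_n)=a_n$. I then declare $\Delta_n=[\,x_n-\varkappa(x_n),\,x_n+\varkappa(x_n)\,]$ and $a_{n+1}=x_n+\varkappa(x_n)=a_n+2\varkappa(x_n)>a_n$. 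Since this step can always be carried out, the sequence $\{\Delta_n\}_{n\ge1}$ is well defined. By construction $\Delta_n^{(-)}=a_n$ and $\Delta_n^{(+)}=a_{n+1}=\Delta_{n+1}^{(-)}$, and $\Delta_1^{(-)}=a_1=x$, so conditions 1) and 2) hold, and $\{a_n\}$ is strictly increasing.

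The remaining point, which I expect to be the only substantive one, is condition 3). Since $\bigcup_{n\ge1}\Delta_n=\bigcup_{n\ge1}[a_n,a_{n+1}]$, it suffices to show $a_n\to\infty$. Suppose not: then the increasing sequence $\{a_n\}$ converges to a finite limit $L\ge x$, so $2\varkappa(x_n)=a_{n+1}-a_n\to0$ and therefore $x_n=a_n+\varkappa(x_n)\to L$. Continuity of $\varkappa$ would then force $\varkappa(x_n)\to\varkappa(L)$, and since $\varkappa(L)>0$ this contradicts $\varkappa(x_n)\to0$. Hence $a_n\to\infty$, the segments $\Delta_n$ tile $[x,\infty)$ without gaps, and we obtain an $\mathbb R(x,\varkappa(\cdot))$‑covering of $[x,\infty)$.

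For the covering of $(-\infty,x]$ I would argue symmetrically: put the right endpoint of $\Delta_{-1}$ at $x$, use the continuous function $\tilde g(t)=t+\varkappa(t)$ together with the second relation in \eqref{2.39} (so $\tilde g(t)\to-\infty$ as $t\to-\infty$) to solve, at each step, $x_{-n}+\varkappa(x_{-n})=(\text{current left endpoint})$ via the intermediate value theorem, and lay the segments $\Delta_{-n}$ down leftward; the same boundedness‑contradiction argument shows the left endpoints tend to $-\infty$, giving condition 3) on the left. The only technical ingredients throughout are the continuity of $\varkappa$, its positivity (so each new segment has length $2\varkappa(x_n)>0$), and the two limits in \eqref{2.39}, which are exactly what make the intermediate value step succeed at every stage.
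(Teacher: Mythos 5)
Your construction is correct. The paper itself gives no proof of Lemma~\ref{lem2.5.2}, citing it from the references on Otelbaev-type coverings, and your greedy argument is exactly the standard one: the intermediate value theorem applied to $t\mapsto t-\varkappa(t)$ (using \eqref{2.39} and $\varkappa(a_n)>0$) produces each successive segment, and the boundedness contradiction --- if $a_n\uparrow L<\infty$ then $\varkappa(x_n)\to 0$ while $x_n\to L$ and $\varkappa(L)>0$ --- correctly establishes condition~3). The mirror argument for $(-\infty,x]$ is equally sound, so nothing is missing.
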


    \section{Results and Additional Comments}

    The following assertion can be viewed as a complement to \thmref{thm2.1.2}.

    \begin{theorem}\label{thm3.1}
    Let $1/r(\cdot)\in L_1.$ Then the functions
    \begin{equation}\label{3.1}
    u(x)=\frac{1}{\sqrt{w_0}}\int_x^\infty\frac{dt}{r(t)},\quad v(x)=\frac{1}{\sqrt{w_0}}\int_{-\infty}^x\frac{dt}{r(t)},\quad x\in\mathbb R,\quad w_0=\int_{-\infty}^\infty\frac{dt}{r(t)}
    \end{equation}
    form a PFSS $\{u(x),v(x)\}$, $x\in\mathbb R$, of the equation
    \begin{equation}\label{3.2}
    (r(x)z'(x))'=0,\quad x\in\mathbb R.
    \end{equation}
    \end{theorem}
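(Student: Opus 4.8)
The plan is to verify directly that the pair $\{u,v\}$ in \eqref{3.1} meets every requirement of Definition~\ref{defn2.1.1}. Since $1/r\in L_1$, the tails $\int_x^\infty dt/r(t)$ and $\int_{-\infty}^x dt/r(t)$ are finite for each $x\in\mathbb R$ and $0<w_0<\infty$, so $u$ and $v$ are well defined and absolutely continuous on $\mathbb R$, with
$$
r(x)u'(x)=-\frac{1}{\sqrt{w_0}},\qquad r(x)v'(x)=\frac{1}{\sqrt{w_0}},\qquad x\in\mathbb R .
$$
Hence $ru'$ and $rv'$ are (absolutely continuous) constants, so $(ru')'=(rv')'=0$ a.e.; that is, $u$ and $v$ are solutions of \eqref{3.2} in the sense fixed in \S\ref{introduction}.

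Next I would check \eqref{2.2} and \eqref{2.3}. Positivity of $u,v$ is immediate because $1/r>0$ on $\mathbb R$, so each tail integral is strictly positive; the formulas above give $u'<0$ and $v'>0$ on $\mathbb R$, which yields \eqref{2.2}. The normalisation \eqref{2.3} is the heart of the matter and follows from the elementary identity
$$
u(x)+v(x)=\frac{1}{\sqrt{w_0}}\left(\int_x^\infty\frac{dt}{r(t)}+\int_{-\infty}^x\frac{dt}{r(t)}\right)=\sqrt{w_0},\qquad x\in\mathbb R ,
$$
since then $r(x)[v'(x)u(x)-u'(x)v(x)]=(rv')u-(ru')v=\frac{1}{\sqrt{w_0}}\bigl(u(x)+v(x)\bigr)=1$. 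In particular the associated Wronskian is nonzero, so $\{u,v\}$ is a fundamental system of solutions of \eqref{3.2}.

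Finally I would verify the asymptotic condition \eqref{2.4}: convergence of $\int_{\mathbb R}dt/r(t)$ forces $u(x)\to0$ and $v(x)\to\sqrt{w_0}>0$ as $x\to\infty$, and $v(x)\to0$ and $u(x)\to\sqrt{w_0}>0$ as $x\to-\infty$, whence $u(x)/v(x)\to0$ at $+\infty$ and $v(x)/u(x)\to0$ at $-\infty$. I do not expect a genuine obstacle here: the whole argument is a short computation, and the only point requiring a moment's thought is spotting the identity $u(x)+v(x)\equiv\sqrt{w_0}$, which makes both \eqref{2.3} and \eqref{2.4} transparent; one should merely be careful to record the mild regularity (absolute continuity of $u$, $v$ and of $ru'$, $rv'$) demanded by the paper's notion of solution, which is automatic since $ru'$, $rv'$ are constants and $1/r\in L_1\subset L_1^{\loc}$.
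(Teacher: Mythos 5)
Your proof is correct and follows exactly the route the paper intends: the paper's own proof consists of the single sentence that ``a straightforward check'' verifies the PFSS properties, and your write-up simply carries out that check, with the identity $u(x)+v(x)\equiv\sqrt{w_0}$ delivering both the Wronskian normalisation \eqref{2.3} and the limits \eqref{2.4}. No gaps; all steps match the definitions in \S2.1.
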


    Definition \ref{defn2.1.1} can be complemented by \thmref{thm3.2} and \corref{cor3.3}.
    \begin{theorem}\label{thm3.2}
    Suppose that equation \eqref{2.1} has a PFSS $\{u(x),v(x)\},$ $x\in\mathbb R.$ Then any other PFSS $\{u_1(x),v_1(x)\},$ $x\in \mathbb R,$ of this equation is of the form
    \begin{equation}\label{3.3}
    u_1(x)=\alpha u(x),\quad v_1(x)=\alpha^{-1}v(x),\quad x\in\mathbb R.
    \end{equation}
    \end{theorem}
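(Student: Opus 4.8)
The plan is to exploit linearity: since $\{u(x),v(x)\}$ is a fundamental system of solutions of \eqref{2.1}, there exist constants $a,b,c,d$ with
\begin{equation*}
u_1(x)=a\,u(x)+b\,v(x),\qquad v_1(x)=c\,u(x)+d\,v(x),\qquad x\in\mathbb R,
\end{equation*}
and $ad-bc\neq0$ because $\{u_1(x),v_1(x)\}$ is again a fundamental system. It then remains to pin down these four constants from \eqref{2.2}--\eqref{2.4}.

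First I would use the asymptotic relations \eqref{2.4} to force $b=c=0$. From $u(x)/v(x)\to0$ as $x\to\infty$ one gets $u_1(x)\sim b\,v(x)$ when $b\neq0$ and $v_1(x)\sim d\,v(x)$ when $d\neq0$, hence $u_1(x)/v_1(x)\to b/d\neq0$, which contradicts the PFSS requirement $u_1(x)/v_1(x)\to0$. The degenerate subcases $b\neq0,\ d=0$ and $b\neq0,\ a=0$ are eliminated by combining the same idea with $v(x)/u(x)\to0$ as $x\to-\infty$, with $ad-bc\neq0$, and with positivity of $u_1,v_1$. Thus $b=0$ and $u_1=a\,u$; then $v_1(x)/u_1(x)\to0$ as $x\to-\infty$ together with $v(x)/u(x)\to0$ gives $c=0$, so $v_1=d\,v$. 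From $ad-bc\neq0$ we get $a\neq0$, $d\neq0$, and since $u,v,u_1,v_1>0$ this forces $a>0$, $d>0$.

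Finally I would substitute $u_1=a\,u$, $v_1=d\,v$ into the normalization \eqref{2.3}. Since $r\,(v_1'u_1-u_1'v_1)=a\,d\cdot r\,(v'u-u'v)=a\,d$ and this must equal $1$, we obtain $d=a^{-1}$; writing $\alpha=a$ yields \eqref{3.3}.

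The only step needing care is the case analysis that rules out $b\neq0$ (genuine ``mixing'' of $u$ and $v$). It is routine once one notes that, by the monotonicity and positivity in \eqref{2.2}, $u$ is bounded below on $(-\infty,0]$ and $v$ is bounded below on $[0,\infty)$, so every ratio occurring above really does have a limit. Beyond this observation the argument uses only positivity, the Wronskian identity \eqref{2.3}, and the asymptotics \eqref{2.4}; the sign conditions $u'\le0$, $v'\ge0$ play no further role.
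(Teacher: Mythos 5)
Your proof is correct, and it takes a genuinely different --- and more elementary --- route than the paper's. You expand both $u_1$ and $v_1$ in the basis $\{u,v\}$ and kill the off-diagonal coefficients purely from the two limit relations \eqref{2.4} (applied to both systems) together with positivity and the nondegeneracy of the change of basis; the normalization \eqref{2.3} then fixes $ad=1$. The paper instead expands only $u_1=\alpha u+\beta v$, deduces $\alpha\ge0$, $\beta\ge0$ from \eqref{2.4}, and rules out $\beta>0$ by a monotonicity argument for $g=u_1/v$ combined with the integral facts $\int_0^\infty dt/(r(t)v(t)^2)<\infty$ and $\int_0^\infty dt/(r(t)u_1(t)^2)=\infty$ (Lemmas 4.1--4.3, which rest on the Davies--Harrell formulas \eqref{2.7}); it then recovers $v_1=\alpha^{-1}v$ from the representation $v_1(x)=u_1(x)\int_{-\infty}^x dt/(r(t)u_1(t)^2)$ rather than directly from \eqref{2.3}. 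Your argument avoids the Davies--Harrell machinery entirely, at the cost of a slightly longer case analysis (which you handle correctly: the subcase $b\neq0$, $d=0$ forces $v_1=cu$ with $c\neq0$ and is excluded by the limit at $-\infty$); the paper's detour is not wasted, since Lemmas 4.1--4.3 are reused in later proofs (e.g.\ those of Theorems 3.5 and 3.18 and Corollary 3.13). One small remark: the boundedness-from-below observation you append at the end is not actually needed --- the limits $u_1/v\to b$, $v_1/v\to d$ as $x\to\infty$ and $u_1/u\to a$, $v_1/u\to c$ as $x\to-\infty$ follow directly from \eqref{2.4} and the positivity of $u$ and $v$ in \eqref{2.2}.
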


    Here $\alpha$ is an arbitrary fixed positive constant.

    \begin{corollary}\label{cor3.3}
    If equation \eqref{2.1} has a PFSS $\{u(x),v(x)\},$ $x\in\mathbb R,$ then the function $\rho(x),$ $x\in\mathbb R$ (see \eqref{2.6}) does not depend on the choice of a PFSS.
    \end{corollary}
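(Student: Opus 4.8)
The plan is to deduce Corollary \ref{cor3.3} directly from Theorem \ref{thm3.2}. Suppose equation \eqref{2.1} has a PFSS $\{u(x),v(x)\}$, $x\in\mathbb R$, and let $\rho(x)=u(x)v(x)$ as in \eqref{2.6}. Let $\{u_1(x),v_1(x)\}$, $x\in\mathbb R$, be any other PFSS of \eqref{2.1}, and write $\rho_1(x)=u_1(x)v_1(x)$. I want to show $\rho_1\equiv\rho$.

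First I would invoke Theorem \ref{thm3.2}: there is a fixed positive constant $\alpha$ with $u_1(x)=\alpha u(x)$ and $v_1(x)=\alpha^{-1}v(x)$ for all $x\in\mathbb R$. Then the single computation
\begin{equation}\label{3.4}
\rho_1(x)=u_1(x)v_1(x)=\big(\alpha u(x)\big)\big(\alpha^{-1}v(x)\big)=u(x)v(x)=\rho(x),\qquad x\in\mathbb R,
\end{equation}
finishes the argument, since the factors $\alpha$ and $\alpha^{-1}$ cancel. Thus $\rho$ is the same function regardless of which PFSS is used to define it, which is precisely the assertion of the corollary.

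There is essentially no obstacle here: the corollary is an immediate consequence of the normalization \eqref{3.3} established in Theorem \ref{thm3.2}, and all the real work (showing that any two PFSS of \eqref{2.1} differ only by the scaling $u\mapsto\alpha u$, $v\mapsto\alpha^{-1}v$) is carried out in the proof of that theorem. The only point worth a word of care is that $\alpha$ in \eqref{3.3} is a genuine constant (independent of $x$), as stated in the sentence following Theorem \ref{thm3.2}; granted that, the cancellation in \eqref{3.4} is valid pointwise for every $x\in\mathbb R$, and no further regularity or limiting considerations are needed.
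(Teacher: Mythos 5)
Your proposal is correct and follows exactly the paper's route: the corollary is read off from Theorem \ref{thm3.2}, since the scaling factors $\alpha$ and $\alpha^{-1}$ cancel in the product $u_1(x)v_1(x)=u(x)v(x)$. The paper's own proof is the one-line remark that the assertion ``immediately follows from \eqref{3.3}'', which is precisely your computation.
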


    \begin{remark}\label{rem3.4}
    Therefore, the function $\rho(x),$ $x\in\mathbb R,$ is a well-defined implicit function in the coefficients of equation \eqref{2.1}. It is instructive to compare this fact with formulas \eqref{2.7} and the Davies-Harrell representation of the Green function $G(\cdot,\cdot)$ (see \eqref{2.21}, \cite{11} and \cite{3}):
    \begin{equation}\label{3.4}
    G(x,t)=\sqrt{\rho(x)\rho(t)}\exp\left(-\frac{1}{2}\left|\int_x^t\frac{d\xi}{r(\xi)\rho(\xi)}\right|\right),\quad x,t\in\mathbb R.
    \end{equation}
    \end{remark}

Throughout the sequel, we call $\rho(x),$ $x\in\mathbb R,$ the generating function of the PFSS of equation \eqref{2.1}.

\begin{theorem}\label{thm3.5}
Suppose that
\begin{equation}\label{3.5}
\int_{-\infty}^0\frac{dt}{r(t)}=\int_0^\infty\frac{dt}{r(t)}=\infty,\quad q(x)=0\quad\text{for}\ x\in\mathbb R.
\end{equation}
Then equation \eqref{3.2} has no PFSS.
\end{theorem}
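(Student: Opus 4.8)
The plan is to argue by contradiction: suppose equation \eqref{3.2} has a PFSS $\{u(x),v(x)\}$, $x\in\mathbb R$. The general solution of $(r(x)z'(x))'=0$ is explicit — integrating once gives $r(x)z'(x)=\const$, and integrating again gives $z(x)=A+B\int_{x_0}^x\frac{dt}{r(t)}$ for constants $A,B$. So there must exist constants $a_1,b_1,a_2,b_2$ with $u(x)=a_1+b_1\int_{0}^{x}\frac{dt}{r(t)}$ and $v(x)=a_2+b_2\int_{0}^{x}\frac{dt}{r(t)}$. The monotonicity requirements in \eqref{2.2} ($u'\le0$, $v'\ge0$) force $b_1\le0$ and $b_2\ge0$, since $u'(x)=b_1/r(x)$ and $v'(x)=b_2/r(x)$ with $r>0$.

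Next I would exploit the condition \eqref{3.5}, namely $\int_{-\infty}^0\frac{dt}{r(t)}=\int_0^\infty\frac{dt}{r(t)}=\infty$. If $b_1<0$, then as $x\to\infty$ we have $u(x)=a_1+b_1\int_0^x\frac{dt}{r(t)}\to-\infty$, contradicting $u(x)>0$. Hence $b_1=0$, so $u$ is a positive constant $a_1>0$. By the symmetric argument applied as $x\to-\infty$ (where $\int_0^x\frac{dt}{r(t)}\to-\infty$), if $b_2>0$ then $v(x)\to-\infty$ as $x\to-\infty$, contradicting $v(x)>0$; hence $b_2=0$ and $v$ is a positive constant $a_2>0$. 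But then both $u$ and $v$ are constants, so $u'\equiv v'\equiv0$, and the Wronskian-type identity \eqref{2.3}, $r(x)[v'(x)u(x)-u'(x)v(x)]=1$, becomes $0=1$, a contradiction. (Alternatively one can note that $\{u,v\}=\{a_1,a_2\}$ is not even a fundamental system, as the two constant solutions are linearly dependent.) This completes the proof.

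The argument is essentially routine once the explicit form of solutions is written down; there is no real obstacle. The only point requiring a moment's care is making sure each of the four sign/growth deductions is drawn from the correct defining property — $u'\le0$ together with $u>0$ and divergence of $\int_0^\infty\frac1r$ kills the $u$-coefficient, while $v'\ge0$ together with $v>0$ and divergence of $\int_{-\infty}^0\frac1r$ kills the $v$-coefficient — and then invoking \eqref{2.3} to reach the final contradiction. Note that conditions \eqref{2.4} are not even needed here; \eqref{2.2} and \eqref{2.3} already suffice. Contrast this with \thmref{thm3.1}, where $\frac1r\in L_1$ makes the integrals $\int_x^\infty\frac1r$ and $\int_{-\infty}^x\frac1r$ finite and positive, so they genuinely can serve as $u$ and $v$.
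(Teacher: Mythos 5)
Your proof is correct, and it finishes differently from the paper's. The paper also starts by writing $u$ in the explicit general form $\alpha z_1+\beta z_2$ with $z_1\equiv1$, $z_2(x)=\int_0^x\frac{dt}{r(t)}$, and kills the case $\beta\neq0$ exactly as you do (monotonicity forces $\beta<0$, and then divergence of $\int_0^\infty\frac{dt}{r(t)}$ contradicts $u>0$). But in the remaining case $\beta=0$, where $u$ is a positive constant, the paper invokes the representation $v(x)=u(x)\int_{-\infty}^x\frac{dt}{r(t)u^2(t)}$ from Lemma~\ref{lem4.2} — a fact derived from the Davies--Harrell formulas \eqref{2.7} and hence resting on the full PFSS definition including \eqref{2.4} — and concludes that $v(x)$ would have to be infinite. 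You instead parametrize $v$ explicitly as well, run the symmetric positivity-plus-divergence argument at $-\infty$ to force its $z_2$-coefficient to vanish, and then contradict the Wronskian normalization \eqref{2.3} (or simply linear independence) since two constants cannot form a FSS. Your route is more elementary and self-contained: it uses only \eqref{2.2} and \eqref{2.3}, never \eqref{2.4} or the Davies--Harrell machinery, and in fact shows the slightly stronger statement that under \eqref{3.5} no FSS of \eqref{3.2} can satisfy even just \eqref{2.2} and \eqref{2.3}. The paper's route, by contrast, reuses a lemma it needs anyway for Theorem~\ref{thm3.2}, so it costs nothing in context. Both arguments are complete and correct.
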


    In the remainder of this section, we assume that condition \eqref{1.2} holds and equation \eqref{2.1} has a PFSS $\{u(x),v(x)\},$ $x\in\mathbb R.$ In the absence of a special need, we do not include these requirements in the statements. Note that we do not assume that condition \eqref{1.3} holds.

    \begin{theorem}\label{thm3.6}
    Equation \eqref{2.1} has no solutions $y(\cdot)\in L_p,$ $p\in[1,\infty)$ except for $y(x)\equiv0,$ $x\in\mathbb R.$
    \end{theorem}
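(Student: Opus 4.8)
The plan is to use the explicit structure of the PFSS. By Theorem~\ref{thm2.1.2} and our standing assumption, equation \eqref{2.1} has a PFSS $\{u(x),v(x)\}$, $x\in\mathbb R$, and by Corollary~\ref{cor2.1.4} (formula \eqref{2.8}) we know that $u$ is positive and nonincreasing while $v$ is positive and nondecreasing. Suppose $y(\cdot)\in L_p$, $p\in[1,\infty)$, solves \eqref{2.1}. Since $\{u,v\}$ is a fundamental system, we may write $y = A u + B v$ for constants $A,B\in\mathbb R$. The aim is to show $A=B=0$.

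First I would show $B=0$. As $x\to\infty$ we have $v(x)\to v(\infty)\in(0,\infty]$ (monotone, positive), and in fact $v$ is bounded below by $v(0)>0$ on $[0,\infty)$, while $u(x)\to 0$-ish is not needed: the point is that $u$ is bounded on $[0,\infty)$ by $u(0)$. Hence on $[0,\infty)$ one has $|y(x)|\ge |B|\,v(x) - |A|\,u(0)\ge |B|\,v(0) - |A|\,u(0)$; more usefully, since $v$ is nondecreasing and positive, if $B\neq0$ then $|y(x)|$ is bounded below by a positive constant on $[0,\infty)$ (for $x$ large enough that $|B|v(x)$ dominates, using $v\ge v(0)$ throughout and monotonicity to absorb the $Au$ term), contradicting $y\in L_p(\mathbb R)$. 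A cleaner route: on $[0,\infty)$, $u$ is bounded and $v\ge v(0)>0$; if $y\in L_p$ then $y\to 0$ along a sequence $x_n\to\infty$, forcing $A\,u(x_n)+B\,v(x_n)\to 0$; but $u(x_n)$ stays bounded and $v(x_n)$ stays bounded below, and a short argument (e.g. passing to a further subsequence along which $u(x_n)$ and $v(x_n)$ converge, using monotonicity so the limits exist, with $\lim v(x_n)=v(\infty)\ge v(0)>0$) gives $A\lim u + B\,v(\infty)=0$; combined with the symmetric relation it pins down the constants. Symmetrically, working on $(-\infty,0]$ where $v$ is bounded above by $v(0)$ and $u\ge u(0)>0$ is nondecreasing-in-the-reversed-direction, one shows $A=0$.

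More carefully, I would exploit \eqref{2.4}: $u(x)/v(x)\to 0$ as $x\to\infty$ and $v(x)/u(x)\to 0$ as $x\to-\infty$. Write $y=Au+Bv$. Dividing by $v(x)$ for $x\to\infty$: $y(x)/v(x) = A\,u(x)/v(x) + B \to B$. If $B\neq 0$, then for large $x$, $|y(x)|\ge \tfrac{|B|}{2} v(x)\ge \tfrac{|B|}{2} v(0)>0$, so $y\notin L_p([0,\infty))$, a contradiction; hence $B=0$. Symmetrically, dividing $y=Au+Bv$ by $u(x)$ as $x\to-\infty$ and using \eqref{2.4} gives $y(x)/u(x)\to A$; if $A\neq 0$ then $|y(x)|\ge\tfrac{|A|}{2}u(x)\ge\tfrac{|A|}{2}u(0)>0$ for $x\to-\infty$ (using that $u$ is nonincreasing, so $u(x)\ge u(0)$ there), contradicting $y\in L_p((-\infty,0])$; hence $A=0$. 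Therefore $y\equiv 0$.

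The only subtlety — and the step I would be most careful about — is guaranteeing the pointwise lower bounds $v(x)\ge v(0)$ for $x\ge 0$ and $u(x)\ge u(0)$ for $x\le 0$, which is exactly the monotonicity \eqref{2.2} together with positivity from Definition~\ref{defn2.1.1}, so there is no real obstacle: the argument is elementary once the PFSS structure is in hand. One should just note that \eqref{2.2} and \eqref{2.4} are part of the definition of a PFSS, so invoking them is legitimate under the standing assumption of this part of the paper, and that $p\in[1,\infty)$ (not $p=\infty$) is what makes ``bounded below by a positive constant on a half-line'' incompatible with membership in $L_p$.
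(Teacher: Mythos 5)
Your proof is correct and follows essentially the same route as the paper: expand $y=Au+Bv$ in the PFSS, use \eqref{2.4} to see that $y/v\to B$ as $x\to\infty$ and $y/u\to A$ as $x\to-\infty$, and then use the positivity and monotonicity in \eqref{2.2} to get a positive pointwise lower bound for $|y|$ on a half-line, which is incompatible with $y\in L_p$ for $p\in[1,\infty)$. The paper's own proof is exactly this (it kills $\beta$ first via the estimate $|1-|\alpha/\beta|u/v|\ge 1/2$ for large $x$, then kills $\alpha$ using $u(x)\ge u(0)$ on $(-\infty,0]$), so no further comment is needed.
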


    \begin{theorem}\label{thm3.7}
    For $p\in[1,\infty),$ equation \eqref{1.1} is correctly solvable in $L_p$ if and only if the Green operator $G:L_p\to L_p$ is bounded (see \eqref{2.22}).
    \end{theorem}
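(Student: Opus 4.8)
The plan is to reduce \thmref{thm3.7} to \thmref{thm2.3.3}, which already gives the equivalence of correct solvability in $L_p$ with boundedness of $G:L_p\to L_p$ under the standing assumption \eqref{2.20}. So the first task is to show that, in the present setting, \eqref{2.20} holds automatically whenever correct solvability is assumed. Here the hypothesis of this section is that equation \eqref{2.1} has a PFSS $\{u(x),v(x)\}$, $x\in\mathbb R$. By \thmref{thm2.1.2} a PFSS exists as soon as \eqref{2.5} holds, but we need the pointwise positivity \eqref{2.20}. I would argue that if $\int_x^\infty q(t)\,dt=0$ for some $x$, then $q\equiv0$ on $[x,\infty)$, and on that ray equation \eqref{2.1} becomes $(rz')'=0$; combined with the behavior of a PFSS (the decay/monotonicity \eqref{2.2} and the asymptotic normalization \eqref{2.4}) and with $1/r\in L_1$ this forces a contradiction with \eqref{2.4} — essentially $u$ would become a positive multiple of $\int_x^\infty dt/r(t)$ plus a constant, and matching \eqref{2.4} at $+\infty$ fails unless the constant vanishes, which then contradicts $u'\le 0$, $u>0$ together with the Wronskian identity \eqref{2.3}. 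The symmetric argument handles $-\infty$. Thus \eqref{2.20} holds, and then \thmref{thm2.3.3} applies verbatim to give the "only if" direction and half of the "if" direction.

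For the reverse implication I would proceed directly rather than via \thmref{thm2.3.3}, so as not to need \eqref{2.19}. Assume $G:L_p\to L_p$ is bounded. First establish existence and the representation: for $f\in L_p$ set $y=Gf$ as in \eqref{2.22}; using the Davies–Harrell form \eqref{3.4} of $G(\cdot,\cdot)$ and \lemref{lem2.3.2} (the splitting $G=G_1+G_2$ of \eqref{2.24}–\eqref{2.25}) one checks that $y$ is absolutely continuous together with $ry'$ and satisfies \eqref{1.1} a.e.; this is a routine differentiation-under-the-integral computation exploiting the jump in $\partial_x G(x,t)$ across $t=x$ and the Wronskian relation \eqref{2.3}, which produces exactly the delta-like contribution giving $-(ry')'+qy=f$. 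The estimate $\|y\|_p\le\|G\|_{p\to p}\|f\|_p$ is then immediate, so condition II) of Definition \ref{defn1.1} holds with $c(p)=\|G\|_{p\to p}$. Uniqueness is exactly \thmref{thm3.6}: if $y_1,y_2\in L_p$ both solve \eqref{1.1}, their difference lies in $L_p$ and solves the homogeneous equation \eqref{2.1}, hence is identically zero.

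The converse direction "correctly solvable $\Rightarrow$ $G$ bounded" is where one must be slightly careful. Given I)–II), the solution operator $f\mapsto y$ is a well-defined linear map $L_p\to L_p$ which by \eqref{1.5} is bounded; the point is to identify it with $G$. For $f\in L_p$ with, say, compact support one verifies by direct integration (again using \eqref{2.3} and \eqref{2.4}) that $Gf$ is a solution of \eqref{1.1} lying in $L_p$ — the $L_p$ membership coming from \eqref{3.4} and the integrability $1/r\in L_1$, $q\in L_1$, which via \thmref{thm3.1}-type estimates control $\rho$ and hence $G$ — and then uniqueness forces $y=Gf$ on a dense subspace, so the bounded solution operator and $G$ agree, giving $\|G\|_{p\to p}\le c(p)<\infty$.

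The main obstacle I anticipate is the first step: deriving \eqref{2.20} from the mere existence of a PFSS together with $1/r\in L_1$, so that \thmref{thm2.3.3} becomes applicable — or, if that turns out to be false in general, replacing the use of \thmref{thm2.3.3} by the self-contained existence/uniqueness/boundedness argument sketched above. A secondary technical point is the justification that $Gf\in L_p$ for $f\in L_p$ when only $G$ is a priori assumed bounded, versus needing an independent integrability estimate on $\rho$; the Davies–Harrell formula \eqref{3.4} makes this manageable but it should be stated carefully.
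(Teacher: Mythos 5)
Your primary route fails at its first step, and the paper itself supplies the counterexample. You propose to derive \eqref{2.20} from the existence of a PFSS (plus $1/r\in L_1$) and then invoke \thmref{thm2.3.3}. But take $q\equiv 0$ with $1/r\in L_1$: by \thmref{thm3.1} the functions $u(x)=w_0^{-1/2}\int_x^\infty dt/r(t)$ and $v(x)=w_0^{-1/2}\int_{-\infty}^x dt/r(t)$ form a PFSS of \eqref{3.2}, yet $\int_x^\infty q(t)\,dt=0$ for every $x$, so \eqref{2.20} fails. The contradiction you anticipate does not materialize: here $u$ is exactly ``a positive multiple of $\int_x^\infty dt/r(t)$ with vanishing constant,'' and this is perfectly compatible with $u>0$, $u'\le 0$, with \eqref{2.3} after normalization, and with \eqref{2.4}. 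Indeed, the remark following \corref{cor3.8} states that the whole point of \thmref{thm3.7} is to cover precisely this case, which \thmref{thm2.3.3} does not. So the reduction to \thmref{thm2.3.3} cannot work.

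Your fallback --- a self-contained argument using only \eqref{1.2} and the PFSS properties \eqref{2.2}--\eqref{2.4} --- is the right strategy and is in substance what the paper does: it observes that the proof of Theorem 1.4 of \cite{6} uses only these properties (not \eqref{2.20}, which served there merely to guarantee a PFSS) and cites it verbatim. However, your sketch of the direction ``correctly solvable $\Rightarrow$ $G$ bounded'' has a genuine soft spot. You propose to verify directly that $Gf\in L_p$ for compactly supported $f$, using $1/r\in L_1$ and $q\in L_1$; but these integrability hypotheses are not part of \thmref{thm3.7}, which is stated under \eqref{1.2} plus existence of a PFSS only, and even under \eqref{1.4} the claim can fail (for $r(x)=1+x^2$, $q\equiv 0$, one has $u(x)\sim c/x$, so $Gf\notin L_1$ for $f\ge 0$ compactly supported). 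The correct argument runs the other way: every solution of \eqref{1.1} is $Gf+c_1u+c_2v$, and outside the support of $f$ this reduces to a linear combination of $u$ and $v$; the monotonicity and positivity \eqref{2.2} together with \eqref{2.4}, exactly as in the proof of \thmref{thm3.6}, force $c_1=c_2=0$ for membership in $L_p$ near $\pm\infty$. Hence $Gf$ is the \emph{only candidate} for an $L_p$ solution, so assumption I) yields $Gf\in L_p$ and II) yields $\|Gf\|_p\le c(p)\|f\|_p$ on a dense class, whence $\|G\|_{p\to p}\le c(p)$. With that repair, and dropping the appeal to \thmref{thm2.3.3} entirely, your fallback coincides with the paper's (cited) proof.
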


    \begin{corollary}\label{cor3.8}
    Let $p\in[1,\infty)$, and suppose that equation \eqref{1.1} is correctly solvable in $L_p.$
   Then for any $f\in L_p$, the solution $y\in L_p$ of \eqref{1.1} is of the form
   \begin{equation}\label{3.6}
   y(x)=(Gf)(x)=\int_{-\infty}^\infty G(x,t)f(t)dt,\quad x\in\mathbb R.
   \end{equation}
    \end{corollary}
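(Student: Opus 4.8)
\smallskip
\noindent\emph{Proof proposal.} The plan is to exhibit the $L_p$-solution explicitly and then appeal to uniqueness. Since \eqref{1.1} is correctly solvable in $L_p$, \thmref{thm3.7} gives that the Green operator $G\colon L_p\to L_p$ is bounded, so $Gf\in L_p$ for every $f\in L_p$. I would then check that $Gf$, defined by the integral \eqref{2.22}, is a solution of \eqref{1.1} in the sense fixed in \secref{introduction}; as the $L_p$-solution of \eqref{1.1} is unique (by part I) of Definition~\ref{defn1.1}, or by \thmref{thm3.6} applied to the difference of two $L_p$-solutions), this yields $y=Gf$, i.e.\ \eqref{3.6}.

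The first point is to make \eqref{2.22} meaningful, i.e.\ to show that for every $x\in\mathbb{R}$ the integrals defining
\[
A(x)=\int_{-\infty}^x v(t)f(t)\,dt ,\qquad B(x)=\int_x^\infty u(t)f(t)\,dt
\]
converge absolutely. It suffices to treat $f\ge0$ (the general case follows by writing $f=f^+-f^-$ with $f^\pm\in L_p$). For $f\ge0$, $A$ is nondecreasing and $B$ is nonincreasing with values in $[0,\infty]$, and by \eqref{2.21} one has $(Gf)(x)=u(x)A(x)+v(x)B(x)$ with $u(x)>0$, $v(x)>0$. Since $Gf\in L_p$ is finite a.e., it follows that $A$ and $B$ are finite a.e., and being monotone they are then finite everywhere. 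This gives the pointwise decomposition $(Gf)(x)=(G_1f)(x)+(G_2f)(x)$, with $G_1$, $G_2$ as in \eqref{2.23} (cf.\ \lemref{lem2.3.2}).

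It then remains to verify that $Gf$ solves \eqref{1.1}. Since $f\in L_p\subset L_1^{\loc}$ and $u,v$ are continuous, $A$ and $B$ are locally absolutely continuous with $A'=vf$, $B'=-uf$ a.e.; and, as $u,v$ solve \eqref{2.1}, the functions $ru'$, $rv'$ are locally absolutely continuous with $(ru')'=qu$, $(rv')'=qv$. Hence $Gf=uA+vB$ is locally absolutely continuous, and differentiating (the terms $\pm u(x)v(x)f(x)$ cancel)
\[
(Gf)'=u'A+v'B ,\qquad r\,(Gf)'=(ru')A+(rv')B ,
\]
so $r\,(Gf)'$ is locally absolutely continuous as well; one more differentiation, using the Wronskian identity \eqref{2.3} in the form $r(u'v-v'u)=-1$, yields
\[
\big(r\,(Gf)'\big)'=q\,uA+q\,vB+\big(ru'v-rv'u\big)f=q\,(Gf)-f\qquad\text{a.e.},
\]
that is, $-\big(r\,(Gf)'\big)'+q\,(Gf)=f$ a.e. Thus $Gf$ is a solution of \eqref{1.1} lying in $L_p$, and the conclusion follows as above. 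I expect the only genuinely nonroutine step to be the absolute convergence of $A(x)$ and $B(x)$ for every $x$ (which also legitimises the differentiations above); granting this, the identity $-\big(r(Gf)'\big)'+q(Gf)=f$ is a direct computation.
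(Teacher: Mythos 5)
Your proof is correct. The paper gives no argument of its own for Corollary~\ref{cor3.8} --- it only remarks that the proof of Corollary~1.5 of \cite{6} carries over verbatim once a PFSS of \eqref{2.1} exists --- and your argument (finiteness of $A(x)$, $B(x)$ from $Gf\in L_p$ and monotonicity, the variation-of-parameters computation using \eqref{2.3}, and uniqueness from part~I) of Definition~\ref{defn1.1}) is exactly the standard argument that reference supplies.
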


    \thmref{thm3.7} has the following advantage compared to \thmref{thm2.3.3}: the requirement of the existence of a PFSS of equation \eqref{2.1} is weaker than condition \eqref{2.20} of \thmref{thm2.3.3} which guarantees this requirement. In particular, \thmref{thm3.7} is applicable in the case \eqref{3.1}, which is not covered by \thmref{thm2.3.3}.

    Now let us introduce a new auxiliary function $s(x),$ $x\in\mathbb R,$ an analogue of the function $d(x),$ $x\in\mathbb R,$ (see \lemref{lem2.3.5}). To this end, fix $x\in\mathbb R$ and consider the following equation in $s\ge0:$
    \begin{equation}\label{3.7}
    \int_{x-s}^{x+s}\frac{dt}{r(t)\rho(t)}=1.
    \end{equation}

    \begin{lemma}\label{lem3.9}
    For any $x\in\mathbb R$, there exists a unique solution of \eqref{3.7} in $s\ge0;$ denote it by $s(x).$ The function $s(x)$ is positive, continuous, and differentiable almost everywhere in $\mathbb R.$ In addition, the following relations hold:
    \begin{alignat}{2}
    & |s(x+t)-s(x)|\le |t| &&\quad\text{for}\quad |t|\le s(x),\ x\in\mathbb R,\label{3.8}\\
     & |s'(x)|<1 &&\quad\text{for almost all}\quad   x\in\mathbb R,\label{3.9}
\end{alignat}
\begin{equation}\label{3.10}
\lim_{x\to-\infty}(x+s(x))=-\infty,\qquad \lim_{x\to\infty}(x-s(x))=\infty,
\end{equation}
\begin{equation}\label{3.11}
 0<s(x)\le|x|\qquad\text{for}\quad |x|\gg1.
\end{equation}
Finally, there is a constant $c\in[1,\infty)$ such that for all $x\in\mathbb R,$ we have the inequality
\begin{equation}\label{3.12}
 s(x)\le c(1+|x|).
\end{equation}
\end{lemma}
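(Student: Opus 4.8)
\emph{Proof proposal.} Throughout, set $g(t)=\frac{1}{r(t)\rho(t)}$; since $\rho=uv$ is continuous and positive by \eqref{2.2} and $1/r\in L_1^{\loc}(\mathbb R)$, the function $g$ is positive and belongs to $L_1^{\loc}(\mathbb R)$. The first fact I would record is that $\int_x^{\infty}g(t)\,dt=\int_{-\infty}^{x}g(t)\,dt=\infty$ for every $x\in\mathbb R$. Indeed, by the Davies--Harrell formulas \eqref{2.7}, $\int_{x_0}^{x}g(t)\,dt=\ln\frac{v(x)}{u(x)}$, and this tends to $+\infty$ as $x\to+\infty$ and to $-\infty$ as $x\to-\infty$ by \eqref{2.4}. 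Consequently, for each fixed $x$ the map $s\mapsto\int_{x-s}^{x+s}g(t)\,dt$ is continuous on $[0,\infty)$, strictly increasing, vanishes at $s=0$, and tends to $\infty$ as $s\to\infty$; hence \eqref{3.7} has exactly one solution $s(x)$, and $s(x)>0$. This settles existence, uniqueness and positivity.

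Next I would prove \eqref{3.8}. Fix $x$ and $t$ with $0<|t|\le s(x)$. The interval with centre $x+t$ and radius $s(x)+|t|$ contains $[x-s(x),x+s(x)]$, while the interval with centre $x+t$ and radius $s(x)-|t|$ is contained in it; integrating $g$ over these intervals and using the monotonicity of $\sigma\mapsto\int_{(x+t)-\sigma}^{(x+t)+\sigma}g$ together with $\int_{x-s(x)}^{x+s(x)}g=\int_{(x+t)-s(x+t)}^{(x+t)+s(x+t)}g=1$, one gets $s(x)-|t|\le s(x+t)\le s(x)+|t|$, which is \eqref{3.8}. In particular $s$ is continuous, hence bounded below by a positive constant on every compact interval; chaining \eqref{3.8} over a sufficiently fine subdivision then shows that $s$ is Lipschitz with constant $1$ on $\mathbb R$, so $s$ is differentiable almost everywhere with $|s'(x)|\le1$ a.e.

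The delicate point --- which I expect to be the main obstacle --- is the \emph{strict} inequality \eqref{3.9}, the difficulty being that $r$, hence $g$, is merely locally integrable, not continuous. I would obtain it by differentiating the identity $H(\phi(x))-H(\psi(x))\equiv1$, where $H(x)=\int_0^{x}g(t)\,dt$, $\phi(x)=x+s(x)$, $\psi(x)=x-s(x)$. Since $|s'|\le1$ a.e., both $\phi$ and $\psi$ are nondecreasing Lipschitz functions, so $H\circ\phi$ and $H\circ\psi$ are absolutely continuous and the chain rule for absolutely continuous functions applies (with the convention that $H'(\phi(x))\,\phi'(x)=0$ wherever $\phi'(x)=0$, and likewise for $\psi$). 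Differentiating the identity yields, for almost every $x$,
\[
g\bigl(x+s(x)\bigr)\,\bigl(1+s'(x)\bigr)=g\bigl(x-s(x)\bigr)\,\bigl(1-s'(x)\bigr).
\]
If $1+s'(x)=0$ held on a set of positive measure, the right-hand side would equal $2\,g(x-s(x))>0$ there, a contradiction; similarly $1-s'(x)=0$ cannot hold on a positive-measure set. Hence, for a.e.\ $x$,
\[
s'(x)=\frac{g(x-s(x))-g(x+s(x))}{g(x-s(x))+g(x+s(x))},
\]
a quotient of the form $\frac{A-B}{A+B}$ with $A,B>0$, so $|s'(x)|<1$ a.e. To make the chain-rule step airtight, the cleanest device is to first reparametrise via the absolutely continuous increasing bijection $T(x)=\int_{x_0}^{x}g$ of $\mathbb R$, whose inverse $R$ is again absolutely continuous (Banach--Zarecki, using $g>0$ a.e.): then \eqref{3.7} reads $2x=R(\alpha)+R(\alpha+1)$ and $2s(x)=R(\alpha+1)-R(\alpha)$ with $\alpha=\alpha(x)$ determined by the first relation, and $s$, $s'$ can be computed directly.

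It remains to prove \eqref{3.10}--\eqref{3.12}. For \eqref{3.10}, suppose $x_n\to+\infty$ with $x_n-s(x_n)\le M$; then $[x_n-s(x_n),x_n+s(x_n)]\supseteq[M,x_n]$ for large $n$, so $1=\int_{x_n-s(x_n)}^{x_n+s(x_n)}g\ge\int_{M}^{x_n}g\to\int_{M}^{\infty}g=\infty$ (recall $\int_M^\infty g=\infty$), which is absurd; hence $x-s(x)\to+\infty$ as $x\to+\infty$, and symmetrically $x+s(x)\to-\infty$ as $x\to-\infty$. Relation \eqref{3.11} is then immediate: by \eqref{3.10}, $x-s(x)\ge0$ for $x\gg1$ (so $s(x)\le x=|x|$) and $x+s(x)\le0$ for $x\ll-1$ (so $s(x)\le-x=|x|$), while $s(x)>0$ always. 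Finally, fix $R_0$ so large that \eqref{3.11} holds for $|x|\ge R_0$, and set $M_0=\max_{|x|\le R_0}s(x)$, which is finite by continuity; then $s(x)\le\max\{1,M_0\}\,(1+|x|)$ for all $x\in\mathbb R$, which is \eqref{3.12} with $c=\max\{1,M_0\}\in[1,\infty)$.
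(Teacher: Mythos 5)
Your proposal is correct and follows essentially the same route as the paper: monotonicity of $s\mapsto\int_{x-s}^{x+s}\frac{dt}{r(t)\rho(t)}$ together with the divergence of $\int^{\pm\infty}\frac{dt}{r(t)\rho(t)}$ (the paper's \eqref{4.1}) gives existence, uniqueness and \eqref{3.10}--\eqref{3.12}, the interval-inclusion argument gives \eqref{3.8}, and implicit differentiation of \eqref{3.7} yields $s'=\frac{A-B}{A+B}$ with $A,B>0$, hence \eqref{3.9}. The only difference is that you rigorously justify the a.e.\ differentiation (chain rule for absolutely continuous compositions, resp.\ the reparametrisation via $T(x)=\int_{x_0}^x\frac{dt}{r(t)\rho(t)}$), whereas the paper differentiates formally; this extra care is warranted since $1/r$ is only locally integrable, and it leads to the same formula for $s'$.
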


\begin{remark}\label{rem3.10}
Functions similar to $s(x),$ $x\in\mathbb R$ (see also the functions $d(x)$, $x\in\mathbb R,$ from \lemref{lem2.3.5}) were introduced and used in various areas of analysis by  Otelbaev (see \cite{15}). The fact that one of such functions is Lipschitz was first proved by K.T.~Mynbaev (see \cite{15}).
\end{remark}

\begin{lemma}\label{lem3.11}
For $t\in[x-s(x),x+s(x)],$ $x\in\mathbb R,$ we have the following estimates:
\begin{equation}\label{3.13}
e^{-1}\rho(x)\le \rho(t)\le e\rho(x),\quad e=\exp(1),
\end{equation}
\begin{equation}\label{3.14}
e^{-1}u(x)\le u(t)\le eu(x),\quad e^{-1} v(x)\le v(t)\le ev(v).
\end{equation}
\end{lemma}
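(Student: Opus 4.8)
The plan is to note that each of the functions $\rho$, $u$, $v$ has logarithmic derivative bounded in modulus by $1/(r\rho)$, and that $1/(r\rho)$ integrates to exactly $1$ over the segment $[x-s(x),x+s(x)]$ by the defining relation \eqref{3.7}; both \eqref{3.13} and \eqref{3.14} then follow from the fundamental theorem of calculus applied to $\ln\rho$, $\ln u$, $\ln v$.

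First I would record the three pointwise estimates, valid for almost every $\xi\in\mathbb R$:
$$
\left|\frac{\rho'(\xi)}{\rho(\xi)}\right|<\frac{1}{r(\xi)\rho(\xi)},\qquad \left|\frac{u'(\xi)}{u(\xi)}\right|<\frac{1}{r(\xi)\rho(\xi)},\qquad \left|\frac{v'(\xi)}{v(\xi)}\right|<\frac{1}{r(\xi)\rho(\xi)}.
$$
The first is just \eqref{2.9} divided by $\rho(\xi)>0$. For the other two I would start from \eqref{2.8}: since $|r(\xi)\rho'(\xi)|<1$ by \eqref{2.9}, both $1-r(\xi)\rho'(\xi)$ and $1+r(\xi)\rho'(\xi)$ lie strictly between $0$ and $2$, whence $0<-u'(\xi)/u(\xi)<1/(r(\xi)\rho(\xi))$ and $0<v'(\xi)/v(\xi)<1/(r(\xi)\rho(\xi))$.

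Now fix $x\in\mathbb R$ and $t\in[x-s(x),x+s(x)]$. Since $u$ and $v$ are (locally) absolutely continuous, so is $\rho=uv$, and all three being continuous and strictly positive (by \eqref{2.2}), their logarithms are (locally) absolutely continuous too. Let $g$ denote any one of $\rho$, $u$, $v$. The closed interval with endpoints $x$ and $t$ is contained in $[x-s(x),x+s(x)]$ and the integrand below is nonnegative, so combining the fundamental theorem of calculus with the three estimates above and with \eqref{3.7} yields
$$
\bigl|\ln g(t)-\ln g(x)\bigr|=\left|\int_x^t\frac{g'(\xi)}{g(\xi)}\,d\xi\right|\le\left|\int_x^t\frac{d\xi}{r(\xi)\rho(\xi)}\right|\le\int_{x-s(x)}^{x+s(x)}\frac{d\xi}{r(\xi)\rho(\xi)}=1.
$$
Exponentiating gives $e^{-1}g(x)\le g(t)\le e\,g(x)$; taking $g=\rho$ we obtain \eqref{3.13}, and taking $g=u$ and $g=v$ we obtain \eqref{3.14}.

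I do not expect a genuine obstacle. The two points deserving a little care are the sign analysis converting \eqref{2.8}--\eqref{2.9} into the logarithmic-derivative bounds for $u$ and $v$, and the (routine) justification that $\rho$ and the relevant logarithms are absolutely continuous, so that the fundamental theorem of calculus is applicable. Should one wish to sidestep the latter, the Davies-Harrell formulas \eqref{2.7} give $u(t)/u(x)$ and $v(t)/v(x)$ as $\sqrt{\rho(t)/\rho(x)}$ times $\exp(\mp\tfrac12\int_x^t d\xi/(r(\xi)\rho(\xi)))$, and each factor is controlled directly by \eqref{3.13} and \eqref{3.7}, giving \eqref{3.14} again; \eqref{3.13} itself still needs the displayed integration argument, now only for $g=\rho$.
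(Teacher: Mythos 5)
Your proof is correct and follows essentially the same route as the paper: for \eqref{3.13} the paper also integrates the bound $|\rho'/\rho|\le 1/(r\rho)$ from \eqref{2.9} over the segment where $\int d\xi/(r\rho)=1$, and for \eqref{3.14} it uses precisely the Davies--Harrell factorization you mention as your alternative at the end, writing $v(t)/v(x)=\sqrt{\rho(t)/\rho(x)}\,\exp\bigl(\tfrac12\int_x^t d\xi/(r(\xi)\rho(\xi))\bigr)$ and bounding each factor by $\sqrt{e}$. Your primary derivation of \eqref{3.14} via the logarithmic-derivative bounds from \eqref{2.8}--\eqref{2.9} is an equally valid, essentially equivalent variant (it is just the differentiated form of \eqref{2.7}) and yields the same constant $e$.
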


\begin{theorem}\label{thm3.12}
For $p\in(1,\infty),$ equation \eqref{1.1} is correctly solvable in $L_p$ if and only if $\mathcal D<\infty.$ Here
\begin{equation}\label{3.15}
\mathcal D=\sup_{x\in\mathbb R}(\rho(x)s(x)).
\end{equation}
In particular, the following inequalities hold (see \eqref{2.22}, \eqref{2.23}):
\begin{equation}\label{3.16}
c^{-1}\mathcal D\le\|G_1\|_{p\to p};\quad \|G_2\|_{p\to p};\quad \|G\|_{p\to p}\le c\mathcal D.
\end{equation}
\end{theorem}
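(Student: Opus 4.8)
The plan is to deduce the criterion from \thmref{thm2.4.2}, via \thmref{thm3.7} and \eqref{2.25}. By \thmref{thm3.7}, \eqref{1.1} is correctly solvable in $L_p$ iff the Green operator $G$ is bounded on $L_p$, and by \eqref{2.25} it suffices to estimate $\|G_1\|_{p\to p}$ and $\|G_2\|_{p\to p}$, where $G_1,G_2$ are the operators \eqref{2.23}. Since $u(\cdot),v(\cdot)$ are positive and continuous, $G_1$ is a $K^{(-)}$-operator (notation of \thmref{thm2.4.2}) with $\mu=u$, $\theta=v$, and $G_2$ is a $K^{(+)}$-operator with $\mu=v$, $\theta=u$. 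Hence \thmref{thm2.4.2} and \eqref{2.38} give, with constants depending only on $p\in(1,\infty)$,
\[
\|G_1\|_{p\to p}\asymp\sup_{x}\Big(\int_{-\infty}^x v^{p'}\Big)^{1/p'}\Big(\int_x^\infty u^{p}\Big)^{1/p}=:\sup_x H^{(-)}(x),\qquad \|G_2\|_{p\to p}\asymp\sup_{x}\Big(\int_{-\infty}^x v^{p}\Big)^{1/p}\Big(\int_x^\infty u^{p'}\Big)^{1/p'}=:\sup_x H^{(+)}(x),
\]
so that, by \eqref{2.25}, the theorem and \eqref{3.16} will follow once we prove $\sup_x H^{(\pm)}(x)\asymp\mathcal D$.

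The lower bound is immediate. For $q\in(1,\infty)$, \lemref{lem3.11} (see \eqref{3.14}) gives $u(t)\ge e^{-1}u(x)$ on $[x,x+s(x)]\subset[x-s(x),x+s(x)]$, whence $\int_x^\infty u^{q}\ge\int_x^{x+s(x)}u^{q}\ge e^{-q}u(x)^{q}s(x)$, and symmetrically $\int_{-\infty}^x v^{q}\ge e^{-q}v(x)^{q}s(x)$. Taking $q\in\{p,p'\}$ and using $\rho=uv$ yields $H^{(\pm)}(x)\ge e^{-2}\rho(x)s(x)$ for every $x$, so $\|G_1\|_{p\to p},\|G_2\|_{p\to p},\|G\|_{p\to p}\ge e^{-2}\mathcal D$. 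In particular, if $\mathcal D=\infty$ then $G$ is unbounded on $L_p$ and \eqref{1.1} is not correctly solvable.

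The substance is the reverse inequality; assume $\mathcal D<\infty$ and fix $x$. Since $s$ satisfies \eqref{3.10}, \lemref{lem2.5.2} (with $\varkappa=s$) gives an $\mathbb R(x,s(\cdot))$-covering $\{\Delta_n\}_{n\ge1}$ of $[x,\infty)$, $\Delta_n=[x_n-s(x_n),x_n+s(x_n)]$, $\Delta_1^{(-)}=x$, $\Delta_n^{(+)}=\Delta_{n+1}^{(-)}$. On $\Delta_n$, \lemref{lem3.11} gives $u\asymp u(x_n)$ and $\rho\asymp\rho(x_n)$; combined with $\int_{\Delta_n}\frac{dt}{r(t)\rho(t)}=1$ (see \eqref{3.7}) this yields $\int_{\Delta_n}u^{q}\asymp u(x_n)^{q}s(x_n)$, $\int_{\Delta_n}\frac{dt}{r(t)}\asymp\rho(x_n)$, and — through the common point $\Delta_n^{(+)}$ — $u(x_n)\asymp u(x_{n+1})$, $\rho(x_n)\asymp\rho(x_{n+1})$. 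Summing, $\int_x^\infty u^{q}\asymp\sum_{n\ge1}u(x_n)^{q}s(x_n)$, while $\sum_{n\ge1}\rho(x_n)\asymp\int_x^\infty\frac{dt}{r(t)}<\infty$, and $u(x)\asymp u(x_1)$, $\rho(x)\asymp\rho(x_1)$ since $x\in\Delta_1$. The \emph{main obstacle} is to show that $\sum_{n\ge1}u(x_n)^{q}s(x_n)$ is dominated by $c(q)$ times its first term, i.e. that $u(x_n)^{q}s(x_n)$ decays geometrically in $n$ with ratio depending only on $q$. This is delicate precisely because $u(x_n),\rho(x_n),s(x_n)$ are each only \emph{slowly varying} along the covering (comparable for consecutive indices), so the decay is not a consequence of monotonicity (nor of $\sum_n\rho(x_n)<\infty$) alone: it must be extracted from the interplay of the a priori relations \eqref{2.8}--\eqref{2.9}, the Wronskian normalization \eqref{2.3}, the Otelbaev inequality \eqref{3.13}, and the uniform bound $\rho(x_n)s(x_n)\le\mathcal D$, and it is here that $1/r\in L_1$ and $q\in L_1$ are used essentially. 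Granting this, $\int_x^\infty u^{q}\le c(q)\,u(x_1)^{q}s(x_1)$ for $q\in\{p,p'\}$; symmetrically, from an $\mathbb R(x,s(\cdot))$-covering of $(-\infty,x]$ with last center $y_1$ (so $y_1+s(y_1)=x$, $v(x)\asymp v(y_1)$, $\rho(x)\asymp\rho(y_1)$) one gets $\int_{-\infty}^x v^{q}\le c(q)\,v(y_1)^{q}s(y_1)$.

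It remains to combine the two tail estimates. Using $p'(p-1)=p$,
\[
H^{(-)}(x)^{p}=\Big(\int_{-\infty}^x v^{p'}\Big)^{p-1}\Big(\int_x^\infty u^{p}\Big)\le c\,\bigl(v(y_1)^{p'}s(y_1)\bigr)^{p-1}u(x_1)^{p}s(x_1)=c\,\bigl(v(y_1)u(x_1)\bigr)^{p}\,s(y_1)^{p-1}s(x_1);
\]
since $v(y_1)u(x_1)\asymp v(x)u(x)=\rho(x)\asymp\rho(y_1)\asymp\rho(x_1)$, the right-hand side is $\asymp\bigl(\rho(y_1)s(y_1)\bigr)^{p-1}\bigl(\rho(x_1)s(x_1)\bigr)\le\mathcal D^{p}$, whence $H^{(-)}(x)\le c\,\mathcal D$ for every $x$, and $H^{(+)}(x)\le c\,\mathcal D$ follows the same way. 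Therefore $\sup_x H^{(\pm)}(x)\le c\,\mathcal D$, so $\|G_1\|_{p\to p},\|G_2\|_{p\to p},\|G\|_{p\to p}\le c\,\mathcal D<\infty$: equation \eqref{1.1} is correctly solvable and \eqref{3.16} holds. Together with \thmref{thm3.7}, this completes the proof, the only non-routine step being the geometric decay of $\sum_n u(x_n)^{q}s(x_n)$ discussed above.
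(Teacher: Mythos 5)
Your necessity argument is exactly the paper's: restrict the Hardy factors of \thmref{thm2.4.2} to $[x-s(x),x]$ and $[x,x+s(x)]$ and invoke \lemref{lem3.11} to get $\|G_2\|_{p\to p}\ge e^{-2}\mathcal D$. The sufficiency half, however, has a genuine gap that you yourself flag: the geometric decay of the contributions of the covering segments is the entire content of the upper bound, and you only ``grant'' it. Moreover, you misidentify its source. It does not come from $1/r\in L_1$, $q\in L_1$ --- \thmref{thm3.12} is stated under condition \eqref{1.2} plus the existence of a PFSS only, and the paper emphasizes that it also covers the case \eqref{1.3}. The actual mechanism is the Davies--Harrell formulas \eqref{2.7} combined with \lemref{lem4.6}: for an $\mathbb R(x,s(\cdot))$-covering one has $\int_{\Delta_1^{(-)}}^{\Delta_n^{(-)}}\frac{d\xi}{r(\xi)\rho(\xi)}=n-1$, so once the Hardy factors are rewritten via \eqref{2.7} in the form $\int\rho(t)\exp\bigl(-c\int_x^t\frac{d\xi}{r(\xi)\rho(\xi)}\bigr)\,dt$ (these are the inequalities \eqref{4.24}--\eqref{4.25} imported from \cite{6}), the $n$-th segment contributes at most $2e\,\rho(x_n)s(x_n)e^{-c(n-1)}\le 2e\,\mathcal D\,e^{-c(n-1)}$, and the sum is $\le c\,\mathcal D$.

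A second, related problem is that the specific form in which you pose the missing step --- $\sum_{n\ge1}u(x_n)^qs(x_n)\le c(q)\,u(x_1)^qs(x_1)$, i.e.\ each Hardy factor separately dominated by its first term --- is stronger than what is needed and is not what the paper proves; I do not see how to obtain it from $\rho(x_n)s(x_n)\le\mathcal D$ alone. Indeed, by \eqref{2.7} and \eqref{4.17} one has $u(x_n)^qs(x_n)\asymp u(x_1)^q\rho(x_1)^{-q/2}\,\rho(x_n)^{q/2}s(x_n)\,e^{-q(n-1)/2}$, and the factor $\rho(x_n)^{q/2}s(x_n)=(\rho(x_n)s(x_n))^{q/2}s(x_n)^{1-q/2}$ is not controlled relative to the first term: there is no lower bound on $\rho(x_1)s(x_1)$ and no two-sided control of $s(x_n)^{1-q/2}$ along the covering. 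The paper sidesteps this by keeping $\rho$ to the \emph{first} power inside each factor, so that the bound $\rho s\le\mathcal D$ applies segment by segment and the product of the two factors, not each factor alone, is estimated by $c\,\mathcal D$. In short: right skeleton (\thmref{thm3.7}, \lemref{lem4.4}, \thmref{thm2.4.2}, coverings, \lemref{lem3.11}), correct necessity, but the sufficiency is incomplete and the proposed route to close it is unlikely to be repairable as stated.
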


It is easy to see that under condition \eqref{1.3}, Theorems \ref{thm2.3.9} and \ref{thm3.12} are equivalent. However, \thmref{3.12} is more general than \thmref{thm2.3.9}. For example, it is applicable in the case $1/r\in L_1,$ $q\equiv0$ whereas \thmref{thm2.3.9} is obviously not applicable because of condition \eqref{2.20}.

\begin{corollary}\label{cor3.13}
For $p\in(1,\infty)$, equation \eqref{1.1} is correctly solvable in $L_p$ if any of the following inequalities holds:
\begin{align}
\sigma_1 &= \sup_{x\in\mathbb R}(r(x)\rho(x)^2)<\infty, \label{3.17}\\
\sigma_2 &= \sup_{x\in\mathbb R}(\rho(x)|x|)<\infty, \label{3.18}\\
\sigma_3 &= \inf_{x\in\mathbb R}\left(\frac{1}{2s(x)}\int_{x-s(x)}^{x+s(x)}q(\xi)d\xi\right)>0,  \label{3.19}\\
 \sigma_4 &=  \inf_{x\in\mathbb R} q(x)>0, \label{3.20}\\
\sigma_5 &= \sup_{x\in\mathbb R}\left[r(x)\left(\int_{-\infty}^x\frac{dt}{r(t)}\right)^2
 \left(\int_x^\infty\frac{dt}{r(t)}\right)^2\right]<\infty. \label{3.21}
 \end{align}
 \end{corollary}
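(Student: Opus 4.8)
The plan is to reduce each of the five cases to \thmref{thm3.12}: since a PFSS of \eqref{2.1} is assumed to exist, it suffices to check that $\mathcal D=\sup_{x\in\mathbb R}\rho(x)s(x)<\infty$, with $s(\cdot)$ the function of \lemref{lem3.9}. The recurring tool is the defining relation $\int_{x-s(x)}^{x+s(x)}\frac{dt}{r(t)\rho(t)}=1$ together with the estimate \eqref{3.13} of \lemref{lem3.11}, which on the segment $[x-s(x),x+s(x)]$ lets one replace $\rho(t)$ by $\rho(x)$ at the cost of the factor $e^{\pm1}$.

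\emph{Cases $\sigma_1$ and $\sigma_5$.} If $\sigma_1<\infty$, then from $\rho(t)\le e\rho(x)$ on $[x-s(x),x+s(x)]$ and $\int_{x-s(x)}^{x+s(x)}\frac{dt}{r(t)\rho(t)}=1$ one gets $\int_{x-s(x)}^{x+s(x)}\frac{dt}{r(t)}\le e\rho(x)$, while $r(t)\le\sigma_1\rho(t)^{-2}\le e^2\sigma_1\rho(x)^{-2}$ on the same segment gives $\int_{x-s(x)}^{x+s(x)}\frac{dt}{r(t)}\ge 2e^{-2}\sigma_1^{-1}\rho(x)^2 s(x)$; comparing the two, $\rho(x)s(x)\le\tfrac12 e^3\sigma_1$, so $\mathcal D<\infty$. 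For $\sigma_5$ note first that $\sigma_5<\infty$ forces $1/r\in L_1$ (if $\int_{-\infty}^{x_0}dt/r=\infty$ for some $x_0$, the product in \eqref{3.21} is infinite for every $x\ge x_0$, and symmetrically). By \thmref{thm3.1}, equation \eqref{3.2} then has a PFSS whose generating function is $\rho_0(x)=w_0^{-1}\big(\int_{-\infty}^x dt/r\big)\big(\int_x^\infty dt/r\big)$ (with $w_0=\int_{\mathbb R}dt/r$), so $\sup_x r(x)\rho_0(x)^2=\sigma_5/w_0^2<\infty$; that is, the hypothesis of the $\sigma_1$-case holds for $\rho_0$. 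Because $q\ge 0$, a comparison of Green functions gives $\rho(x)\le\rho_0(x)$ for all $x$, whence the function $s(\cdot)$ attached to $\rho$ does not exceed the analogous function $s_0(\cdot)$ attached to $\rho_0$, and the $\sigma_1$-estimate applied to $\rho_0$ yields $\rho(x)s(x)\le\rho_0(x)s_0(x)\le\tfrac12 e^3\sigma_5/w_0^2<\infty$.

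\emph{Cases $\sigma_3$, $\sigma_4$, $\sigma_2$.} For $\sigma_3$ and $\sigma_4$ the device is the identity
\[
(r(x)\rho'(x))'=2q(x)\rho(x)-\frac{1-(r(x)\rho'(x))^2}{2r(x)\rho(x)},\qquad x\in\mathbb R,
\]
which follows from \eqref{2.8} and the fact that $u,v$ solve \eqref{2.1}: write $r\rho'=(ru')v+(rv')u$, differentiate, and use $(ru')'=qu$, $(rv')'=qv$ together with $u'v'=\rho\cdot\frac{u'}{u}\cdot\frac{v'}{v}$. Integrating this identity over $[x-s(x),x+s(x)]$ and invoking $|r\rho'|<1$ from \eqref{2.9}, the bound $0\le 1-(r\rho')^2\le 1$, and $\int_{x-s(x)}^{x+s(x)}\frac{dt}{r\rho}=1$, one obtains the absolute bound $\int_{x-s(x)}^{x+s(x)}q(t)\rho(t)\,dt\le\tfrac54$. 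If $\sigma_3>0$, then $\int_{x-s(x)}^{x+s(x)}q(t)\,dt\ge 2\sigma_3 s(x)$ and, by \eqref{3.13}, the integral just bounded is $\ge e^{-1}\rho(x)\cdot 2\sigma_3 s(x)$; hence $\rho(x)s(x)\le\tfrac{5e}{8\sigma_3}$ and $\mathcal D<\infty$. Case $\sigma_4$ reduces at once to $\sigma_3$, since $\inf_x q(x)>0$ forces $\sigma_3\ge\sigma_4>0$. For $\sigma_2$, by \eqref{3.11} there is $x_\ast$ with $0<s(x)\le|x|$ for $|x|\ge x_\ast$, so $\rho(x)s(x)\le\rho(x)|x|\le\sigma_2$ there, while on the compact set $\{|x|\le x_\ast\}$ the continuous functions $\rho$ and $s$ (\lemref{lem3.9}) are bounded; thus $\mathcal D<\infty$.

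The delicate point is the inequality $\rho\le\rho_0$ used in the $\sigma_5$-case, i.e. the monotone dependence of the generating function of a PFSS on $q$. I would establish it through the Green functions: either from the quadratic-form inequality $-(ry')'+q\ge -(ry')'$ combined with boundedness of the $q\equiv 0$ resolvent on $L_2$ (available once $\sigma_5<\infty$, by the $\sigma_1$-case applied to $\rho_0$), or, more directly, by noting that for $f\ge 0$ the function $w=G_0f-Gf$ satisfies $-(rw')'+qw=q\,G_0f\ge 0$ and hence equals $G(q\,G_0f)\ge 0$, because both $G$ and $G_0$ have nonnegative kernels; thus $G(x,t)\le G_0(x,t)$ and $\rho(x)=G(x,x)\le G_0(x,x)=\rho_0(x)$. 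Once this is in hand, all five cases follow from \lemref{lem3.9}, \lemref{lem3.11}, formulas \eqref{2.8} and \eqref{2.9}, \thmref{thm3.1}, and \thmref{thm3.12}.
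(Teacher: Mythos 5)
Your overall strategy coincides with the paper's: every case is reduced to $\mathcal D<\infty$ and \thmref{thm3.12}, using the defining relation \eqref{3.7} and \lemref{lem3.11}. Cases $\sigma_1$, $\sigma_2$, $\sigma_4$ are correct and essentially identical to the paper's. Case $\sigma_3$ is correct but follows a genuinely different route: the paper integrates $(ru')'=qu$, $(rv')'=qv$ against the Green kernel to get $\int_{\mathbb R}q(t)G(x,t)\,dt\le 1$ and then restricts to $[x-s(x),x+s(x)]$, whereas you derive the Riccati-type identity $(r\rho')'=2q\rho-\frac{1-(r\rho')^2}{2r\rho}$ from \eqref{2.8} and integrate it, using \eqref{2.9} to bound the boundary terms; both yield an absolute bound on $\int_{x-s(x)}^{x+s(x)}q\rho\,dt$ ($\le 1$ resp.\ $\le 5/4$), and your identity is a nice self-contained alternative that avoids the integration-by-parts boundary terms at $\pm\infty$.

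Case $\sigma_5$, however, contains a genuine gap. Your argument hinges on the domination $\rho\le\rho_0$ of generating functions under $q\ge 0$, and neither of your two sketches establishes it: the identity $w=G(q\,G_0f)$ presupposes both that $G(q\,G_0f)$ is finite and, more seriously, that $w$ is \emph{the} solution selected by the Green operator --- i.e.\ that the difference $w-G(q\,G_0f)$, a solution of the homogeneous equation \eqref{2.1}, vanishes; on the whole line this uniqueness step needs control of $w$ at $\pm\infty$ (or an appeal to \thmref{thm3.6} in some $L_p$, which presumes the very boundedness being proved). The quadratic-form alternative has the same issue of passing from form domination to pointwise kernel domination. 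The paper sidesteps all of this: from \eqref{4.2} and the monotonicity of $u$ one gets directly $\rho(x)=u(x)v(x)\le\int_{-\infty}^x\frac{dt}{r(t)}$ and symmetrically $\rho(x)\le\int_x^\infty\frac{dt}{r(t)}$, hence $\rho(x)\le\tau\int_{-\infty}^x\frac{dt}{r(t)}\cdot\int_x^\infty\frac{dt}{r(t)}$ with $\tau=\max\{(\int_{-\infty}^0\frac{dt}{r})^{-1},(\int_0^\infty\frac{dt}{r})^{-1}\}$, so $r\rho^2\le\tau^2\sigma_5<\infty$ and case $\sigma_1$ applies. Note also that this elementary route only gives $\rho\le 2\rho_0$ (since $\min(A,B)\le 2AB/(A+B)$), which already suffices; your stronger claim $\rho\le\rho_0$ and the ensuing comparison $s\le s_0$ are an unnecessary detour even if true. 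Replacing your $\sigma_5$ argument by the two-line reduction to $\sigma_1$ would make the proof complete.
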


 Note that the fact that condition \eqref{3.20} is sufficient for the correct solvability of \eqref{1.1} in $L_p$ was established in \cite{16},

 \begin{corollary}\label{cor3.14}
 Suppose that the following two conditions hold:
 \begin{equation}\label{3.22}
 1)\ r_0\doe\inf_{x\in\mathbb R}(r(x))<\infty;\qquad 2)\ \lim_{|x|\to\infty} r(x)\rho(x)=\infty.
 \end{equation}
 Then for $p\in(1,\infty),$ equation \eqref{1.1} is not correctly solvable in $L_p.$
  \end{corollary}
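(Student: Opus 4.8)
The plan is to read off non-solvability from the criterion of \thmref{thm3.12}: for $p\in(1,\infty)$ equation \eqref{1.1} is correctly solvable in $L_p$ if and only if $\mathcal D=\sup_{x\in\mathbb R}(\rho(x)s(x))<\infty$. Hence it suffices to exhibit a sequence $\{z_n\}\subset\mathbb R$ with $|z_n|\to\infty$ along which both $s(z_n)\to\infty$ and $\rho(z_n)\to\infty$; this forces $\mathcal D=\infty$, and \thmref{thm3.12} then yields that \eqref{1.1} is not correctly solvable. The two hypotheses in \eqref{3.22} are used to drive the two factors to infinity: condition 2 will supply $s\to\infty$ (and, after locating good points, $\rho\to\infty$), while condition 1 will supply the points $z_n$.

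First I would record the behaviour of $s(\cdot)$ at infinity. Fix $M>0$. By condition 2 in \eqref{3.22} there is $R_M>0$ with $r(t)\rho(t)>M$ whenever $|t|>R_M$. By \eqref{3.10} we have $x-s(x)\to\infty$ as $x\to\infty$ and $x+s(x)\to-\infty$ as $x\to-\infty$, so for $|x|$ large the whole segment $[x-s(x),x+s(x)]$ lies in $\{|t|>R_M\}$; there the integrand in the defining relation \eqref{3.7} is smaller than $1/M$, whence
\begin{equation*}
1=\int_{x-s(x)}^{x+s(x)}\frac{dt}{r(t)\rho(t)}<\frac{2s(x)}{M},
\end{equation*}
so $s(x)>M/2$. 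As $M$ is arbitrary, $s(x)\to\infty$ as $|x|\to\infty$.

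Next I would invoke condition 1. Since $r_0\doe\inf_{x\in\mathbb R}r(x)<\infty$, there is a sequence $\{z_n\}$ with $r(z_n)\to r_0$, in particular $\sup_n r(z_n)<\infty$. Provided $|z_n|\to\infty$, condition 2 applies at the points $z_n$ and gives $r(z_n)\rho(z_n)\to\infty$; since $r(z_n)$ stays bounded, this forces $\rho(z_n)\to\infty$. Combining this with the previous paragraph, along $\{z_n\}$ we have $s(z_n)\to\infty$ and $\rho(z_n)\to\infty$, hence $\rho(z_n)s(z_n)\to\infty$ and $\mathcal D=\infty$. By \thmref{thm3.12}, equation \eqref{1.1} is not correctly solvable in $L_p$ for $p\in(1,\infty)$.

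The main obstacle is the italicised proviso in the last step: condition 1 only produces a near-minimising sequence $r(z_n)\to r_0$, and a priori the infimum could be approached only on a bounded set, in which case condition 2 cannot be used at the $z_n$. Thus the heart of the matter is to show that one may take $|z_n|\to\infty$ while keeping $r(z_n)$ bounded; it is precisely here that the finiteness of $\inf r$ (rather than a bound on $\sup r$) must be brought to bear. I would attempt this by analysing the growth of $r\rho$ against the length scale $s(\cdot)$ via \eqref{2.9} (which limits $|\rho'|$ by $1/r$) together with the normalisation \eqref{2.3}, selecting $z_n$ where $r$ is closest to $r_0$ on successively larger shells $\{|t|\ge n\}$. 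This extraction is the step I expect to demand the most care, and it is the only place where the two conditions in \eqref{3.22} interact nontrivially.
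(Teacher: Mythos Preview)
Your first step, showing $s(x)\to\infty$ as $|x|\to\infty$ from condition~2) and \eqref{3.10}, is correct and coincides with the paper's argument.

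Your second step, however, has the gap you yourself flag, and the paper does \emph{not} proceed by hunting for special points $z_n$ with $r(z_n)$ bounded. Instead it obtains a lower bound on $\rho(x)$ valid for \emph{all} large $|x|$, via \lemref{lem3.11}. On the segment $[x-s(x),x+s(x)]$ one has $\rho(t)\le e\,\rho(x)$, so the defining relation \eqref{3.7} gives
\[
1=\int_{x-s(x)}^{x+s(x)}\frac{dt}{r(t)\rho(t)}
\;\ge\;\frac{1}{e\,\rho(x)}\int_{x-s(x)}^{x+s(x)}\frac{dt}{r(t)}
\;\ge\;\frac{2}{e\,r_0}\,\frac{s(x)}{\rho(x)},
\]
whence $\rho(x)\ge\dfrac{2}{e\,r_0}\,s(x)$ and $\rho(x)s(x)\ge c^{-1}s(x)^2\to\infty$. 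Thus $\mathcal D=\infty$ and \thmref{thm3.12} finishes the proof. The point you were missing is precisely the use of \lemref{lem3.11} to trade the variable $\rho(t)$ in the integrand for $\rho(x)$; this converts \eqref{3.7} into a direct comparison between $\rho(x)$ and $s(x)$ and removes any need to select a subsequence.

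One further remark that explains why you got stuck: in the last displayed inequality the paper uses $r(t)\le r_0$, i.e.\ it treats $r_0$ as an \emph{upper} bound for $r$. Condition~1) as written, $\inf_{x}r(x)<\infty$, is vacuous for a positive function; the proof actually requires (and uses) $\sup_x r(x)<\infty$. Your attempt to squeeze content out of the literal $\inf$-condition via a near-minimising sequence was therefore fighting a misprint, and the obstacle you identified is not removable under that reading.
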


  In connection with \corref{cor3.14}, note that if the function $q(x),$ $x\in\mathbb R$ is finitary and the function $r(x),$ $x\in\mathbb R$, satisfies condition \eqref{2.19}, then for $p\in(1,\infty)$ equation \eqref{1.1} is not solvable in $L_p$ (see \thmref{thm2.3.1}).

  \begin{definition}\label{defn3.15}
  Let functions $f(x)$ and $g(x)$ be defined, continuous and positive for $x\in(a,b),$ $-\infty\le a<b\le \infty.$
  We say that these functions are weakly equivalent for $x\in(a,b)$ and write $f(x)\asymp g(x),$ $x\in (a,b)$ if there is a constant $c\in[1,\infty)$ such that for all $x\in(a,b)$, we have the inequalities
  \begin{equation}\label{3.23}
 c^{-1}f(x)\le g(x)\le cf(x).
 \end{equation}
 \end{definition}

 In the next definition, together with \eqref{1.1}, we consider the equation
   \begin{equation}\label{3.24}
 -(r(x)y'(x))'+q_1(x)y(x)=f(x),\quad x\in\mathbb R
 \end{equation}
 under the condition
    \begin{equation}\label{3.25}
 r(\cdot)>0,\quad q_1(\cdot)\ge0,\quad\frac{1}{r}\in L_1^{\loc}(\mathbb R),\quad q_1\in L_1^{\loc}(\mathbb R.
 \end{equation}
 We also assume, without mention in the statements, that equation \eqref{2.11} like equation \eqref{3.24} satisfies condition \eqref{3.25} and has a PFSS $\{u_1(x),v_1(x)\},$ $x\in\mathbb R.$
   \begin{definition}\label{defn3.16}
   We say that for equations \eqref{1.1} and \eqref{3.24} problems I)-II), $p\in(1,\infty),$ are equivalent if from the correct solvability of one of them follows the correct
    solvability of the other.
   \end{definition}

   \begin{theorem}\label{thm3.17}
   For equations \eqref{1.1} and \eqref{3.24}, problems I)-II), $p\in(1,\infty)$ are equivalent (not equivalent) if for $x\in\mathbb R$ the functions $\rho(x)$ and $\rho_1(x),$ generating PFSS of equations \eqref{2.1} and \eqref{2.11} are (are not) weakly equivalent.
   \end{theorem}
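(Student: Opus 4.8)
The plan is to deduce \thmref{thm3.17} from the effective criterion of \thmref{thm3.12}. By that theorem, for $p\in(1,\infty)$ equation \eqref{1.1} (resp.\ \eqref{3.24}) is correctly solvable in $L_p$ if and only if $\mathcal D:=\sup_{x\in\mathbb R}\rho(x)s(x)<\infty$ (resp.\ $\mathcal D_1:=\sup_{x\in\mathbb R}\rho_1(x)s_1(x)<\infty$), where $s$ and $s_1$ are the functions attached by \eqref{3.7} to $\rho$ and to $\rho_1$ (see \lemref{lem3.9}). Since these criteria do not involve $p$, correct solvability in one $L_p$, $p\in(1,\infty)$, is the same as in all of them, and the theorem reduces to the purely analytic statement that $\mathcal D$ and $\mathcal D_1$ are finite simultaneously exactly when $\rho\asymp\rho_1$; moreover, once this is known, the quantitative bounds \eqref{3.16} transfer between the two equations as well.

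For the implication ``$\rho\asymp\rho_1\Rightarrow$ equivalence'', write $c_0^{-1}\rho\le\rho_1\le c_0\rho$ with $c_0\in[1,\infty)$. Since $\rho\asymp\rho_1$, the equivalence $\mathcal D<\infty\Leftrightarrow\mathcal D_1<\infty$ follows once we establish $s\asymp s_1$, because then $\rho(x)s(x)\asymp\rho_1(x)s_1(x)$ uniformly in $x$. By the symmetry of $\asymp$ it suffices to find $c_1=c_1(c_0)$ with $s_1(x)\le c_1 s(x)$ for all $x$. Fix $x$ and set $\Phi(\tau)=\int_{x-\tau}^{x+\tau}\frac{dt}{r(t)\rho(t)}$, $\Phi_1(\tau)=\int_{x-\tau}^{x+\tau}\frac{dt}{r(t)\rho_1(t)}$; both are continuous and strictly increasing, and $\Phi(s(x))=\Phi_1(s_1(x))=1$ by \eqref{3.7}. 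As $1/\rho_1\ge c_0^{-1}/\rho$ we get $\Phi_1(\lambda s(x))\ge c_0^{-1}\Phi(\lambda s(x))$, so $s_1(x)\le c_1 s(x)$ will follow, by monotonicity of $\Phi_1$, once we know the uniform growth estimate $\Phi(\lambda s(x))\ge g(\lambda)$ for all $x$ and all $\lambda\ge1$, with $g(\lambda)\to\infty$ (then choose $c_1$ with $g(c_1)\ge c_0$). This estimate is the crux. I plan to obtain it from an Otelbaev covering: $\varkappa=s(\cdot)$ satisfies \eqref{3.10}, hence \eqref{2.39}, so by \lemref{lem2.5.2} there is an $\mathbb R(x,s(\cdot))$-covering $\{\Delta_n\}$ of $[x,\infty)$ (and symmetrically one of $(-\infty,x]$), with $\Delta_n=[x_n-s(x_n),x_n+s(x_n)]$ and, crucially, $\int_{\Delta_n}\frac{dt}{r\rho}=1$ for every $n$ by the very definition of $s(x_n)$. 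Bounding below the number of consecutive segments $\Delta_n$ contained in $[x-\lambda s(x),x+\lambda s(x)]$ — their lengths $2s(x_n)$ being controlled via the Lipschitz property \eqref{3.8}--\eqref{3.9} of $s$ and the local equivalence $\rho(t)\asymp\rho(x)$ on $[x-s(x),x+s(x)]$ of \lemref{lem3.11}, while small $s(x_n)$ only accelerates the accumulation of the $1/(r\rho)$-mass — yields $\Phi(\lambda s(x))$ bounded below by a fixed multiple of $\lambda$. This packing estimate, namely that $\lambda$-fold dilation of the window $[x-s(x),x+s(x)]$ multiplies its $1/(r\rho)$-mass by at least a fixed fraction of $\lambda$, uniformly in $x$, is the main technical obstacle. (One could instead run the reduction through the Davies--Harrell kernel \eqref{3.4} together with the operator estimates \thmref{thm2.4.1}, \thmref{thm2.4.2} and \eqref{3.16}, but one is led to the same inequality.)

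For the converse, ``$\rho\not\asymp\rho_1\Rightarrow$ not equivalent'' (Definition~\ref{defn3.16}), the point is that weak equivalence cannot be weakened; this is witnessed by an explicit pair. Take $r\equiv1$. For $q\equiv1$ the normalization \eqref{2.3} gives $u(x)=2^{-1/2}e^{-x}$, $v(x)=2^{-1/2}e^{x}$, so $\rho\equiv\tfrac12$, $s\equiv\tfrac14$, hence $\mathcal D=\tfrac18<\infty$ and \eqref{1.1} is correctly solvable in every $L_p$, $p\in(1,\infty)$. For $q_1$ a nonnegative finitary function whose support meets both $(-\infty,0)$ and $(0,\infty)$, equation \eqref{2.11} still has a PFSS (\thmref{thm2.1.2}), but off $\supp q_1$ the solutions $u_1,v_1$ are affine, and \eqref{2.4} forces $v_1$ to grow linearly at $+\infty$ (and $u_1$ at $-\infty$), so $\rho_1(x)=u_1(x)v_1(x)\to\infty$ as $|x|\to\infty$; thus $\rho_1$ is not weakly equivalent to $\rho$, while, $q_1$ being finitary and $r\equiv1$ satisfying \eqref{2.19}, equation \eqref{3.24} is not correctly solvable in $L_p$, $p\in(1,\infty)$ (cf.\ \thmref{thm2.3.1} and the remark after \corref{cor3.14}). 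Hence for this pair problem I)--II) holds for \eqref{1.1} and fails for \eqref{3.24}, so they are not equivalent, and weak equivalence of $\rho$ and $\rho_1$ is the sharp condition.

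The single hard point in all of this is the uniform growth estimate for $\Phi(\lambda s(x))$ that underlies $s\asymp s_1$; everything else is bookkeeping around \thmref{thm3.12} and elementary facts about $s$, $s_1$, $\rho$, $\rho_1$ collected in \lemref{lem3.9} and \lemref{lem3.11}.
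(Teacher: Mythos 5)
Your reduction to \thmref{thm3.12} is the right starting point, but the way you implement it contains a genuine gap, and it is exactly the step you yourself flag as the crux. You try to deduce $\mathcal D<\infty\Leftrightarrow\mathcal D_1<\infty$ from the \emph{pointwise} comparison $s(x)\asymp s_1(x)$, and you reduce that to a uniform growth (doubling/packing) estimate $\Phi(\lambda s(x))\ge g(\lambda)$ with $g(\lambda)\to\infty$ independently of $x$, for the measure $d\mu=dt/(r\rho)$. This estimate is not merely unproven in your sketch --- it is false in the generality of \eqref{1.2}. The only structural constraint on $\rho$ is \eqref{2.9}, i.e. $|(\ln\rho)'|\le 1/(r\rho)$, which controls the oscillation of $\rho$ \emph{with respect to} $\mu$ (that is the content of \lemref{lem3.11}) but puts no doubling condition on $\mu$ itself, since $r$ is essentially arbitrary. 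One can arrange that $1/(r\rho)$ carries mass $1$ on a small window around $x$ and is then negligible over a long adjacent stretch; then $\Phi(\lambda s(x))$ stays close to $1$ for a large range of $\lambda$, and a companion $\rho_1\asymp\rho$ whose density $1/(r\rho_1)$ carries only mass $1/2$ on that window forces $s_1(x)\gg s(x)$. So $s\asymp s_1$ can fail even when $\rho\asymp\rho_1$; your Otelbaev-covering argument cannot close this, because the segment lengths $2s(x_n)$ of consecutive covering intervals are not controlled by $s(x)$ (the Lipschitz bound \eqref{3.8} only gives $s(x_1)\ge s(x)/2$, not an upper bound). The theorem is nevertheless true because only the suprema $\mathcal D,\mathcal D_1$ need to be compared, not the pointwise products; the paper's proof does precisely this by estimating $\|G_2\|_{p\to p}$ from both sides via the weighted Hardy criterion (\thmref{thm2.4.2}), rewriting everything through the Davies--Harrell formulas \eqref{2.7}, substituting $\rho_1$ for $\rho$ at the level of the integrals (which only costs the constant $a$ of weak equivalence in the exponents), and summing over an $\mathbb R(x,s_1(\cdot))$-covering using the identity \eqref{4.17} to produce convergent geometric series. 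That route yields $c^{-1}\mathcal D_1\le\mathcal D\le c(p)\mathcal D_1$ without ever comparing $s$ and $s_1$; you should replace your packing argument by it (or by an equivalent direct comparison of the Hardy quantities $H_p^{(\pm)}$ for the two kernels).

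Two smaller points. First, the parenthetical hint at the end of your second paragraph (``one could instead run the reduction through \eqref{3.4}, \thmref{thm2.4.2} and \eqref{3.16}'') is in fact the correct proof, not an equivalent reformulation of your inequality; following it does \emph{not} lead back to the doubling estimate. Second, for the ``not equivalent'' half you exhibit one pair with $\rho\not\asymp\rho_1$ for which the problems are not equivalent; a single example cannot establish the universal assertion as literally stated, though to be fair the paper's own proof also only establishes the two-sided bound \eqref{4.21} under the hypothesis of weak equivalence, so you are not worse off there than the source.
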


   \begin{theorem}\label{thm3.18}
   Suppose we are given functions $r(x),$ $q(x),$ $q_1(x)$, continuous for $x\in\mathbb R,$ such that $r(x)>0$ and the following two conditions hold:
   \begin{enumerate}
   \item[1)]  each of the equations \eqref{2.1} and \eqref{2.11} have a PFSS $\{u(x),v(x)\},$ $x\in\mathbb R$, and $\{u_1(x),v_1(x)\},$ $x\in\mathbb R,$ respectively;
       \item[2)] problems \eqref{2.12}-\eqref{2.14} for equations \eqref{2.1} and \eqref{2.11} are solvable as $x\to\pm\infty.$
           \end{enumerate}
           Then for $x\in\mathbb R$ the functions $\rho(x)$ and $\rho_1(x),$ generating PFSS of equations \eqref{2.1} and \eqref{2.11}, are weakly equivalent.
    \end{theorem}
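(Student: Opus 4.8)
The plan is to prove that $\rho(x)/\rho_1(x)$ tends to a positive finite limit as $x\to+\infty$ and, separately, as $x\to-\infty$. Once this is done the conclusion is immediate: $\rho=uv$ and $\rho_1=u_1v_1$ are continuous and strictly positive on $\mathbb R$ by \eqref{2.2}, so the ratio $\rho/\rho_1$ is continuous, positive, and has positive finite limits at $\pm\infty$; hence it is bounded above and bounded away from $0$ on all of $\mathbb R$, which yields a constant $c\in[1,\infty)$ with $c^{-1}\rho_1(x)\le\rho(x)\le c\rho_1(x)$ for every $x$, i.e.\ $\rho\asymp\rho_1$ in the sense of Definition~\ref{defn3.15}.

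For the behaviour near $+\infty$: by hypothesis~2) and Definition~\ref{defn2.21}, equation \eqref{2.1} has an FSS $\{\hat u,\hat v\}$ on $[0,\infty)$ satisfying \eqref{2.12}--\eqref{2.14} relative to the PFSS $\{u_1,v_1\}$ of \eqref{2.11}; I use it only for large $x$. Since $\{u,v\}$ is an FSS of \eqref{2.1} on $\mathbb R$, on $[0,\infty)$ we may write $\hat u=au+bv$, $\hat v=cu+dv$ with $ad-bc\neq0$. The first and crucial step is to show $b=0$, i.e.\ that $\hat u$ is a positive multiple of the recessive solution $u$. From \eqref{2.12} we have $\hat u/u_1\to1$, so $\hat u>0$ near $+\infty$ and $\hat v/\hat u=(\hat v/v_1)(v_1/u_1)(u_1/\hat u)\to+\infty$, because $v_1/u_1\to+\infty$ by \eqref{2.4}. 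On the other hand $u/v\to0$ by \eqref{2.4}, so $\hat u/v\to b$ and $\hat v/v\to d$; if $b\neq0$ this would give $\hat v/\hat u\to d/b$, a finite limit, contradicting $\hat v/\hat u\to+\infty$. Hence $b=0$, and then $a>0$ (from $\hat u/u_1\to1$) and $d>0$ (since $\hat v>0$ near $+\infty$, $\hat v\sim dv$, and $ad\neq0$).

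Next comes the key computation. With $\hat u=au$ and $\hat v=cu+dv$ we get $\hat u\hat v=ac\,u^2+ad\,\rho$, so
\[
\frac{\hat u\hat v}{\rho}=ad+ac\,\frac{u}{v}\longrightarrow ad\qquad(x\to+\infty)
\]
by \eqref{2.4}, while $\hat u\hat v/\rho_1=(\hat u/u_1)(\hat v/v_1)\to1$ by \eqref{2.12}. Therefore $\rho(x)/\rho_1(x)\to(ad)^{-1}\in(0,\infty)$. (If one additionally uses \eqref{2.13}--\eqref{2.14}: subtracting them and inserting $v_1'/v_1-u_1'/u_1=1/(r\rho_1)$ and $\hat v'/\hat v-\hat u'/\hat u=W/(r\hat u\hat v)$, where $W:=r(\hat u\hat v'-\hat u'\hat v)=ad$ is constant by \eqref{2.3}, one finds $W\rho_1/(\hat u\hat v)\to1$, hence $W=ad=1$ and the limit is exactly $1$; a positive finite value is all that is needed.) The case $x\to-\infty$ is entirely symmetric: now $v$ and $v_1$ are the recessive solutions since $v/u\to0$ and $v_1/u_1\to0$ by \eqref{2.4}, so the FSS $\{\tilde u,\tilde v\}$ on $(-\infty,0]$ from hypothesis~2) satisfies $\tilde v=\beta v$ with $\beta>0$, $\tilde u=\gamma u+\delta v$ with $\gamma\beta>0$, whence $\tilde u\tilde v=\gamma\beta\,\rho+\delta\beta\,v^2$, $\tilde u\tilde v/\rho\to\gamma\beta$, $\tilde u\tilde v/\rho_1\to1$, and $\rho/\rho_1\to(\gamma\beta)^{-1}\in(0,\infty)$. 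Combining the two one-sided limits with the continuity/positivity argument of the first paragraph finishes the proof.

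The main obstacle is the step $b=0$ (and, near $-\infty$, that $\tilde v$ is a multiple of $v$): one must read off from the Hartman--Wintner relations \eqref{2.12} that $\hat u$ has to coincide, up to a positive factor, with the recessive solution of \eqref{2.1}. This is precisely what makes the cross term $ac\,u^2$ negligible against $\rho=uv$, so that $\hat u\hat v\asymp\rho$ near $+\infty$; after that the argument rests only on the elementary PFSS identities \eqref{2.2}--\eqref{2.4}.
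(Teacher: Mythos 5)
Your proof is correct, and in its two key steps it takes a genuinely different route from the paper, although the overall skeleton is the same (reduce to one-sided limits of $\rho/\rho_1$ at $\pm\infty$, identify the Hartman--Wintner solution $\hat u$ as a positive multiple of the recessive solution $u$, then conclude by continuity and positivity on a compact interval). To show the coefficient of $v$ in $\hat u$ vanishes, the paper argues with integrals: from \eqref{2.12} one gets $\hat u\asymp u_1$ for large $x$, hence $\int^\infty dt/(r\hat u^2)=\infty$ by \eqref{4.4}, whereas $\beta\ne0$ would force $\hat u\asymp v$ for large $x$ and $\int^\infty dt/(rv^2)<\infty$ by \eqref{4.3} --- a contradiction. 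You instead observe that $\hat v/\hat u\to\infty$ (from \eqref{2.12} together with $u_1/v_1\to0$), while $b\ne0$ would force a finite limit $d/b$; this is more elementary, needing only \eqref{2.2}--\eqref{2.4} and \eqref{2.12} and none of the integral identities \eqref{4.2}--\eqref{4.4}. For the limit of $\rho/\rho_1$, the paper computes $\lim u/u_1=1/\alpha$ and then $\lim v/v_1=\alpha$ via the representation \eqref{4.34}--\eqref{4.35} and L'H\^opital's rule, whereas you bypass both by comparing the products: $\hat u\hat v/\rho=ad+ac\,(u/v)\to ad$ and $\hat u\hat v/\rho_1\to1$, so $\rho/\rho_1\to(ad)^{-1}\in(0,\infty)$. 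This yields a slightly weaker (but entirely sufficient) conclusion than the paper's exact limit $\rho/\rho_1\to1$; your parenthetical remark that $ad=1$ follows from \eqref{2.13}--\eqref{2.14} and \eqref{2.3} correctly recovers the sharper statement. In short: same architecture, but your proof replaces the integral-convergence lemma and the L'H\^opital computation with purely algebraic limit arguments on the PFSS, which is a real simplification.
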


     \begin{theorem}\label{thm3.19}
     Suppose we are given functions $r(x),$ and $q(x),$   continuous for $x\in\mathbb R,$ such that $r(x)>0$ and $q(x)>0$ for $x\in\mathbb R$ and $1/r\in L_1.$ Then, assuming that we have the inequalities
       \begin{equation}\label{3.26}
 -\int_{-\infty}^0q(x)\left(\int_{-\infty}^x\frac{dt}{r(t)}\right)dx<\infty,\quad \left(\int_0^\infty q(x)\int_x^\infty\frac{dt}{r(t)}\right)dx<\infty,
 \end{equation}
 Then problems I)-II), $p\in(1,\infty)$ for equations \eqref{1.1} and
  \begin{equation}\label{3.27}
 (r(x)y'(x))'=f(x),\quad x\in\mathbb R
 \end{equation}
 are equivalent. In particular, these problems are equivalent if instead of \eqref{3.26} we require the inclusion $q(\cdot)\in L_1.$
         \end{theorem}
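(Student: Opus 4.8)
The plan is to deduce Theorem~\ref{thm3.19} from \thmref{thm3.17}, \thmref{thm3.18}, \thmref{thm3.1}, \thmref{thm2.1.2} and \thmref{thm2.2.3}. First I would observe that equation \eqref{3.27} is the $q_1\equiv 0$ instance of \eqref{3.24} up to the harmless substitution $f\mapsto -f$: rewriting $(r(x)y'(x))'=f(x)$ as $-(r(x)y'(x))'+0\cdot y(x)=-f(x)$ and noting that $f\mapsto -f$ is an isometry of $L_p$ which preserves uniqueness of the solution and the constant $c(p)$ in \eqref{1.5}, one sees that problems I)--II) for \eqref{3.27} and for \eqref{3.24} with $q_1\equiv 0$ coincide. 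Hence, by \thmref{thm3.17}, it suffices to prove that the generating function $\rho(\cdot)$ of \eqref{2.1} and the generating function $\rho_1(\cdot)$ of the equation $(r(x)z'(x))'=0$ are weakly equivalent.

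Second, I would note that both generating functions are well defined: since $q(\cdot)>0$ is continuous, condition \eqref{2.5} holds, so \thmref{thm2.1.2} provides a PFSS $\{u,v\}$ of \eqref{2.1} and $\rho=uv$; and since $1/r\in L_1$, \thmref{thm3.1} supplies the explicit PFSS \eqref{3.1} of \eqref{3.2}, so that
\[
\rho_1(x)=\frac{1}{w_0}\left(\int_x^\infty\frac{dt}{r(t)}\right)\left(\int_{-\infty}^x\frac{dt}{r(t)}\right),\qquad w_0=\int_{-\infty}^\infty\frac{dt}{r(t)}<\infty .
\]
Because each of the two factors in the product is at most $w_0$, this yields the elementary bounds $\rho_1(x)\le\int_x^\infty dt/r(t)$ and $\rho_1(x)\le\int_{-\infty}^x dt/r(t)$, valid for all $x\in\mathbb R$.

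Third, I would verify hypothesis 2) of \thmref{thm3.18}: that the Hartman--Wintner problem \eqref{2.12}--\eqref{2.14} is solvable as $x\to\pm\infty$ for the pair consisting of \eqref{2.1} in the role of \eqref{2.10} and \eqref{3.2} in the role of \eqref{2.11}, so that $q_1\equiv 0$ and $(\Delta q)(x)=q(x)$. By \thmref{thm2.2.3} it is enough that $I^{(+)}(0)=\int_0^\infty q(t)\rho_1(t)\,dt$ and $I^{(-)}(0)=\int_{-\infty}^0 q(t)\rho_1(t)\,dt$ converge absolutely; since the integrands $q\rho_1$ are positive, this is just their finiteness. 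Using the bounds on $\rho_1$ above,
\[
0<I^{(+)}(0)\le\int_0^\infty q(t)\left(\int_t^\infty\frac{ds}{r(s)}\right)dt,\qquad
0<I^{(-)}(0)\le\int_{-\infty}^0 q(t)\left(\int_{-\infty}^t\frac{ds}{r(s)}\right)dt,
\]
and the right-hand sides are finite precisely by hypothesis \eqref{3.26}. Hypothesis 1) of \thmref{thm3.18} also holds (both equations have a PFSS, as just checked), so \thmref{thm3.18} gives $\rho\asymp\rho_1$ on $\mathbb R$, and \thmref{thm3.17} then yields the equivalence of problems I)--II), $p\in(1,\infty)$, for \eqref{1.1} and \eqref{3.27}. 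For the last assertion, if $q\in L_1$ then, since $\int_t^\infty ds/r(s)\le w_0$ and $\int_{-\infty}^t ds/r(s)\le w_0$ for every $t$, both integrals in \eqref{3.26} are at most $w_0\|q\|_1<\infty$, so \eqref{3.26} holds automatically.

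The only point requiring care is the bookkeeping of roles: \eqref{2.1} must be substituted for \eqref{2.10} and \eqref{3.2} for \eqref{2.11} consistently when invoking \thmref{thm2.2.3} and \thmref{thm3.18}, and one must recognize the passage from equation \eqref{3.24} with $q_1\equiv 0$ to equation \eqref{3.27} as a mere sign change in $f$ that does not affect correct solvability. All the analytic work reduces to the majorization of $\rho_1(x)$ by $\int_x^\infty dt/r(t)$ and by $\int_{-\infty}^x dt/r(t)$, together with \eqref{3.26}.
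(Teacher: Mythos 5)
Your proposal is correct and follows essentially the same route as the paper: bound $\rho_1(x)$ by $\int_{-\infty}^x dt/r(t)$ and $\int_x^\infty dt/r(t)$ using the explicit PFSS \eqref{3.1}, verify absolute convergence of $I^{(\pm)}(0)$ via \eqref{3.26} and \thmref{thm2.2.3}, then conclude $\rho\asymp\rho_1$ from \thmref{thm3.18} and finish with \thmref{thm3.17}. The extra bookkeeping you supply (the sign change $f\mapsto -f$ relating \eqref{3.27} to \eqref{3.24} with $q_1\equiv 0$, and the explicit check that $q\in L_1$ implies \eqref{3.26}) is left implicit in the paper but changes nothing of substance.
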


         Let us make some comments on the last three assertions. First note that   applying \thmref{thm3.17}    is based on Theorem \ref{thm2.3.7}   in the case \eqref{1.3} and on Theorems \ref{thm3.18} and \ref{thm2.2.3}  in the case \eqref{1.4}. Furthermore, \thmref{thm3.17} is significant in the study of problem I)-II) for two reasons. First, it is equally relevant in both the cases \eqref{1.3} and \eqref{1.4}. Second, \thmref{thm3.17} allows one to control the transition from the original problem I)-II) for equation \eqref{1.1} to a similar equivalent problem for the simple (model) equation \eqref{3.24}. Here the relation $\rho(x)\asymp\rho_1(x),$ $x\in\mathbb R$ serves as a criterion for such a transition to be correct. The conditions for checking this relation are given in \thmref{thm2.3.7} in the case \eqref{1.3}, whereas in the case \eqref{1.4} such a verification is especially simple due to Theorems \ref{thm3.18} and \ref{thm2.2.3} (see the proof of \thmref{thm3.19}). Finally, we describe the conceptual difference between the case \eqref{1.3} and \eqref{1.4}. It is based on the fact that in the case \eqref{1.4}, for equation \eqref{1.1}, regardless of its coefficient $q(\cdot)\in L_1,$ (within the framework of problem I)-II)) one can immediately find  the simplest appropriate model equation  \eqref{3.27} (see \thmref{thm3.19}), whereas (within the same problem I)-II))in the case \eqref{1.3}, the choice of an appropriate model equation \eqref{3.24}  is a nontrivial problem in its own right.

         To conclude this section, note that for the sake of keeping the present paper with a limited amount of pages, we do not study a priori estimates for the function $s(x), $ $x\in\mathbb R$ (see \eqref{3.7} and \thmref{thm3.12}). Therefore, in the study of the example in \S5 we are forced to use special trick and the assertions of \corref{cor3.13}. We plan to study the function $s(x), $ $x\in\mathbb R$  and a direct application of \thmref{thm3.12} in a subsequent paper.

         \section{Proofs and Comments}

         First we note some special features of this section. Our earlier study of equations \eqref{1.1} and \eqref{2.1} was based on the following   (see \cite{3,4,6,7,8,9}):
  \begin{enumerate}\item[1)] Definition \ref{defn2.1.1} of a PFSS of equation \eqref{2.1} and the condition for the their existence (see \thmref{thm2.1.2});
 \item[2)] Davies-Harrell formulas \eqref{2.7} and \eqref{3.4} (see \cite{3} for a simple proof);
     \item[3)] a priori estimates for the function $\rho(x), $ $x\in\mathbb R,$ generating a PFSS of equation \eqref{2.1} (see \cite{8,9}).
        \end{enumerate}

Items 1) and 3) require some comments (in regard to item 2), see Remark \ref{rem2.1.5} and \thmref{thm2.1.3}). We start with 1). The properties of solutions of equation \eqref{2.1} have been studied in many papers (see, e.g., \cite{12}), and below one can see significant overlaps with them. However, the present paper is the first instance of viewing Definition \ref{defn2.1.1} of a PFSS as a basic concept of the theory of equation \eqref{2.1} and, more significantly, it contains the first proof of all the main properties of PFSS based only on Definition \ref{defn2.1.1} and Davies-Harrell formulas \eqref{2.7} (which follow from Definition \ref{defn2.1.1}, see  \cite{3}.

Since we can easily locate the cases where our assertions overlap with the known ones (see \cite{12}), for the sake of brevity we do not mention or comment on them. Now we go to 3). Our main results are Theorems \ref{thm3.7}, \ref{thm3.12}, \ref{thm3.17} and \ref{thm3.19} and their corollaries. The proofs of Theorems \ref{thm3.7} and \ref{thm3.12} follow along the lines of \cite{6}. Therefore, in the cases where the relevant part of the proof only relies on the properties of PFSS of equation \eqref{1.1} and thus mimics the corresponding part of \cite{6}, we only state the assertion and refer the reader to \cite{6}. However, for the other theorems, we present complete new and shorter proofs than found in~\cite{6}.

\begin{proof}[Proof of \thmref{thm3.1}]
A straightforward check shows that in the case \eqref{3.1} the functions $\{u(x),v(x)\},$ $x\in\mathbb R,$ form a PFSS of equation \eqref{3.2}.
\end{proof}

\begin{proof}[Proof of \thmref{thm3.2}]

\begin{lemma}\label{lem4.1} \cite{3}
We have the inequalities
\begin{equation}\label{4.1}
\int_{-\infty}^0\frac{dt}{r(t)\rho(t)}=\int_0^\infty\frac{dt}{r(t)\rho(t)}=\infty.
\end{equation}
\end{lemma}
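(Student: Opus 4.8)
The plan is to extract both divergences from the Davies-Harrell representation together with property \eqref{2.4}. First I would note that from \eqref{2.8} one gets the clean identity
\begin{equation*}
\frac{v'(x)}{v(x)}-\frac{u'(x)}{u(x)}=\frac{1+r(x)\rho'(x)}{2r(x)\rho(x)}+\frac{1-r(x)\rho'(x)}{2r(x)\rho(x)}=\frac{1}{r(x)\rho(x)},\qquad x\in\mathbb R,
\end{equation*}
which is of course just the differentiated form of \eqref{2.7}. Since $u(\cdot)$ and $v(\cdot)$ are positive and (locally) absolutely continuous, $\ln\bigl(v(x)/u(x)\bigr)$ is absolutely continuous, and integrating the identity above over $[0,x]$ (resp. $[x,0]$) yields
\begin{equation*}
\ln\frac{v(x)}{u(x)}-\ln\frac{v(0)}{u(0)}=\int_0^x\frac{dt}{r(t)\rho(t)},\qquad x\in\mathbb R.
\end{equation*}

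Now I would invoke \eqref{2.4}. As $x\to\infty$ we have $u(x)/v(x)\to0$, hence $v(x)/u(x)\to\infty$ and the left-hand side tends to $+\infty$; since the integrand $1/(r\rho)$ is strictly positive, the integral $\int_0^x dt/(r(t)\rho(t))$ is increasing in $x$, so this limit being $+\infty$ is exactly the statement $\int_0^\infty dt/(r(t)\rho(t))=\infty$. Symmetrically, as $x\to-\infty$ the relation $v(x)/u(x)\to0$ forces $\ln\bigl(v(x)/u(x)\bigr)\to-\infty$, i.e. $\int_x^0 dt/(r(t)\rho(t))\to+\infty$, which gives $\int_{-\infty}^0 dt/(r(t)\rho(t))=\infty$.

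There is no real obstacle here; the only points that need a line of care are (i) justifying the fundamental-theorem-of-calculus step, which is immediate from the absolute continuity of $u(\cdot)$, $v(\cdot)$ built into Definition \ref{defn2.1.1} (or one may simply quote \eqref{2.7} directly and avoid integration altogether), and (ii) observing that positivity of $1/(r\rho)$ — guaranteed by $r>0$ and $\rho=uv>0$ — turns the one-sided limits into genuine divergence of the improper integrals. One could equally well phrase the whole argument as: $u(x)/v(x)=\exp\bigl(-\int_{x_0}^x d\xi/(r(\xi)\rho(\xi))\bigr)$ by \eqref{2.7}, and \eqref{2.4} then reads off the two divergences.
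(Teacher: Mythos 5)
Your proof is correct and, despite the detour through \eqref{2.8}, it is essentially the paper's argument: the paper simply reads off $u(x)/v(x)=\exp\bigl(-\int_{x_0}^x d\xi/(r(\xi)\rho(\xi))\bigr)$ from \eqref{2.7} and applies \eqref{2.4} at $\pm\infty$, which is exactly the one-line alternative you note at the end.
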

\begin{proof}
The following relations are deduced from \eqref{2.4}, \eqref{2.6} and \eqref{2.7}:
\begin{gather*}
\exp\left(-\int_{x_0}^x\frac{dt}{r(t)\rho(t)}\right)=\frac{u(x)}{v(x)}\to0\quad\text{as}\quad x\to\infty\quad \Rightarrow\quad\int_0^\infty\frac{dt}{r(t) \rho(t)}=\infty;\\
\exp\left(-\int_{x}^{x_0}\frac{dt}{r(t)\rho(t)}\right)=\frac{v(x)}{u(x)}\to0\quad\text{as}\quad x\to-\infty\quad \Rightarrow\quad\int^0_{-\infty}\frac{dt}{r(t)\rho(t)}=\infty.
\end{gather*}
\end{proof}

\begin{lemma}\label{lem4.2}
For a PFSS of equation \eqref{2.1}, we have the equalities
\begin{equation}\label{4.2}
u(x)=v(x)\int_x^\infty\frac{dt}{r(t)v^2(t)},\quad v(x)=u(x)\int_{-\infty}^x\frac{dt}{r(t)u^2(t)},
\quad x\in\mathbb R.\end{equation}
\end{lemma}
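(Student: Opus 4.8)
The plan is to derive \eqref{4.2} from the classical reduction-of-order identity, using only the Wronskian normalization \eqref{2.3}, the positivity/monotonicity in \eqref{2.2}, and the limit relations \eqref{2.4}. First I would observe that dividing \eqref{2.3} by $v(x)^2$ and by $u(x)^2$ respectively yields
\[
\left(\frac{u}{v}\right)'(x)=-\frac{1}{r(x)v(x)^2},\qquad \left(\frac{v}{u}\right)'(x)=\frac{1}{r(x)u(x)^2},\qquad x\in\mathbb R,
\]
since $(u/v)'=(u'v-uv')/v^2$ and $(v/u)'=(v'u-vu')/u^2$; this is a one-line computation.

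Next I would integrate the first of these over $[x,N]$ with $N>x$. Since $u'\le 0\le v'$ by \eqref{2.2}, the ratio $u/v$ is nonincreasing and positive, so $\int_x^N \big(r(t)v(t)^2\big)^{-1}\,dt=u(x)/v(x)-u(N)/v(N)\le u(x)/v(x)$; letting $N\to\infty$ and invoking $\lim_{x\to\infty}u(x)/v(x)=0$ from \eqref{2.4}, the improper integral converges and
\[
\int_x^\infty\frac{dt}{r(t)v(t)^2}=\frac{u(x)}{v(x)},\qquad x\in\mathbb R,
\]
which is the first claimed identity of \eqref{4.2}. The second identity is obtained symmetrically: integrating $(v/u)'=\big(r u^2\big)^{-1}$ over $[N,x]$ with $N<x$, using that $v/u$ is nondecreasing and positive together with $\lim_{x\to-\infty}v(x)/u(x)=0$, gives $\int_{-\infty}^x\big(r(t)u(t)^2\big)^{-1}\,dt=v(x)/u(x)$, i.e.\ the second equality in \eqref{4.2}.

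There is no real obstacle here; the only point requiring a word of care is the convergence of the two improper integrals, which is supplied by the monotonicity of $u/v$ (resp.\ $v/u$) from \eqref{2.2} and the vanishing of these ratios at the appropriate infinity from \eqref{2.4}. As an alternative route one could verify by direct differentiation that $w(x):=v(x)\int_x^\infty\big(r(t)v(t)^2\big)^{-1}\,dt$ solves $(r w')'=qw$ and then use \eqref{2.3} and \eqref{2.4} to identify $w$ with $u$, but the computation above is shorter and entirely self-contained.
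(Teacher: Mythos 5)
Your proof is correct, and it takes a somewhat more elementary route than the paper's. The paper substitutes the Davies--Harrell representation \eqref{2.7} into the integrand, so that $1/(r(t)v(t)^2)$ becomes $\frac{1}{r(t)\rho(t)}\exp\left(-\int_{x_0}^t\frac{d\xi}{r(\xi)\rho(\xi)}\right)$, recognizes this as an exact derivative, and uses the divergence $\int_0^\infty\frac{dt}{r(t)\rho(t)}=\infty$ from \eqref{4.1} (itself deduced from \eqref{2.4} and \eqref{2.7}) to make the boundary term at $+\infty$ vanish. You instead read off $(u/v)'=-1/(rv^2)$ and $(v/u)'=1/(ru^2)$ directly from the Wronskian normalization \eqref{2.3} and evaluate the resulting telescoping integrals using \eqref{2.4}. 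The two computations are the same identity in disguise, since by \eqref{2.7} one has $u(x)/v(x)=\exp\left(-\int_{x_0}^x\frac{d\xi}{r(\xi)\rho(\xi)}\right)$; but your version uses only the defining properties \eqref{2.2}--\eqref{2.4} of a PFSS and bypasses both the Davies--Harrell formulas and the auxiliary divergence lemma, which is a modest gain in self-containedness. Your explicit handling of the convergence of the two improper integrals (monotonicity and positivity of $u/v$, resp.\ $v/u$, from \eqref{2.2}, combined with the vanishing limits in \eqref{2.4}) addresses exactly the one point that requires care, and does so correctly; note that the paper only records this convergence afterwards, in \eqref{4.3}, as a consequence of \eqref{4.2}.
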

\begin{proof}
 Both equalities in \eqref{4.2} are checked in the same way; using \eqref{2.7} and \eqref{4.1}; for example,
 \begin{align*}
 \int_x^\infty\frac{dt}{r(t)v^2(t)}&=\int_x^\infty\frac{1}{r(t)\rho(t)}
\exp\left(-\int_{x_0}^t\frac{d\xi}{r(\xi)\rho(\xi)}\right)dt=-\exp\left(-\int_{x_0}^t\frac{d\xi}{r(\xi)\rho(\xi) }\right)\Bigg|_x^\infty\\
&=\exp\left(-\int_{x_0}^x\frac{d\xi}{r(\xi)\rho(\xi)}\right)=\frac{u(x)}{v(x)},\quad x\in\mathbb R.
\end{align*}
\end{proof}

\begin{lemma}\label{lem4.3}
For a PFSS of equation \eqref{2.1}, we have the relations
\begin{equation}\label{4.3}
\int_{-\infty}^0\frac{dt}{r(t)u^2(t)}<\infty,\quad \int_0^\infty\frac{dt}{r(t)v^2(t)}<\infty;
\end{equation}
\begin{equation}\label{4.4}
\int_{-\infty}^0\frac{dt}{r(t)v^2(t)}= \int_0^\infty\frac{dt}{r(t)u^2(t)}=\infty.
\end{equation}
\end{lemma}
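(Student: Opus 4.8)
The statement to prove is Lemma~\ref{lem4.3}, consisting of the two relations \eqref{4.3} and \eqref{4.4}.

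The plan is to combine the integral representations of Lemma~\ref{lem4.2} with the divergence relations of Lemma~\ref{lem4.1}, just as Lemma~\ref{lem4.2} itself was derived. First I would establish \eqref{4.3}. By the second equality in \eqref{4.2}, for any fixed $x\in\mathbb R$ we have $\int_{-\infty}^x\frac{dt}{r(t)u^2(t)}=v(x)/u(x)<\infty$; taking $x=0$ gives the first relation in \eqref{4.3}. Symmetrically, the first equality in \eqref{4.2} gives $\int_x^\infty\frac{dt}{r(t)v^2(t)}=u(x)/v(x)<\infty$, and $x=0$ yields the second relation in \eqref{4.3}.

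Next I would prove \eqref{4.4}. Write $\frac{1}{r(t)v^2(t)}=\frac{1}{r(t)\rho(t)}\cdot\frac{u(t)}{v(t)}$ using \eqref{2.6}. Since $u(t)/v(t)\to\infty$ as $t\to-\infty$ by \eqref{2.4}, there is a constant $a<0$ with $u(t)/v(t)\ge 1$ for $t\le a$, hence $\int_{-\infty}^0\frac{dt}{r(t)v^2(t)}\ge\int_{-\infty}^{a}\frac{dt}{r(t)\rho(t)}=\infty$ by \eqref{4.1} (Lemma~\ref{lem4.1}). This proves the first equality in \eqref{4.4}. The second is handled the same way: $\frac{1}{r(t)u^2(t)}=\frac{1}{r(t)\rho(t)}\cdot\frac{v(t)}{u(t)}$, and $v(t)/u(t)\to\infty$ as $t\to\infty$ by \eqref{2.4}, so the integrand is eventually $\ge\frac{1}{r(t)\rho(t)}$ and $\int_0^\infty\frac{dt}{r(t)u^2(t)}=\infty$ again by \eqref{4.1}.

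Alternatively, \eqref{4.4} also follows directly by the computation pattern of Lemma~\ref{lem4.2}: using \eqref{2.7}, $\int_0^\infty\frac{dt}{r(t)u^2(t)}=\int_0^\infty\frac{1}{r(t)\rho(t)}\exp\!\big(\int_{x_0}^t\frac{d\xi}{r(\xi)\rho(\xi)}\big)dt=\exp\!\big(\int_{x_0}^t\frac{d\xi}{r(\xi)\rho(\xi)}\big)\big|_0^\infty=\infty$ since the exponent tends to $+\infty$ by \eqref{4.1}, and similarly for the other integral. I expect no serious obstacle here; the only point requiring a little care is making sure the one-sided divergence statements of \eqref{4.1} are invoked at the correct endpoint and that the eventual lower bound on $u/v$ (resp.\ $v/u$) is justified from \eqref{2.4} before splitting off the divergent tail. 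The argument is a routine consequence of the already-established \eqref{2.4}, \eqref{2.6}, \eqref{2.7}, \eqref{4.1} and \eqref{4.2}.
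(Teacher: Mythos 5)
Your proposal is correct and follows essentially the same route as the paper: \eqref{4.3} is read off from \eqref{4.2} at $x=0$, and \eqref{4.4} is obtained from the Davies--Harrell formulas \eqref{2.7} together with the divergence \eqref{4.1} (your second computation for \eqref{4.4} is literally the paper's argument, and your first variant via $u/v\ge 1$ near $-\infty$ is an equivalent rephrasing).
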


\begin{proof}
Both inequalities in \eqref{4.3} are checked in the same way. For example, the second inequality in \eqref{4.3} follows from \eqref{4.2} for $x=0.$ The equalities \eqref{4.4} are also deduced in the same way from \eqref{2.7} and \eqref{4.1}; for example,
$$\int_{x_0}^\infty\frac{dt}{r(t)u^2(x)}=\int_{x_0}^\infty\frac{1}{r(t)\rho(t)}
\left(\int_{x_0}^t\frac{d\xi}{r(\xi)\rho(\xi)}\right) dt\ge\int_{x_0}^\infty\frac{dt}{r(t)\rho(t)}=\infty\ \Rightarrow\ \eqref{4.4}.$$
\end{proof}

Let us now continue with the proof of the theorem. Let $\{u(x),v(x)\},$ $x\in\mathbb R$ be a PFSS of equation \eqref{2.1}, and let $\{u_1(x),v_1(x)\},$ $x\in\mathbb R,$ be another PFSS of the same equation. Then there are constants $\alpha$ in $\mathbb R$ and $\beta$ in $\mathbb R$ such that $0<|\alpha|+|\beta|<\infty$ and we have the equality
\begin{equation}\label{4.5}
u_1(x)=\alpha u(x)+\beta v(x),\quad x\in\mathbb R.
\end{equation}

The following relations are deduced from \eqref{2.2}, \eqref{2.4} and \eqref{4.5}:
\begin{equation}\label{4.6}
\lim_{x\to-\infty}\frac{u_1(x)}{u(x)}=\alpha+\beta\lim_{x\to-\infty}\frac{v(x)}{u(x)}=\alpha\quad\Rightarrow\quad \alpha\ge0,
\end{equation}
\begin{equation}\label{4.7}
\lim_{x\to\infty}\frac{u_1(x)}{v(x)}=\alpha \lim_{x\to\infty}\frac{u(x)}{v(x) }+\beta=\beta\quad\Rightarrow\quad \beta\ge0.
\end{equation}

Assume that $\beta>0.$ Denote (see \eqref{2.2})
\begin{equation}\label{4.8}
g(x)=\frac{u_1(x)}{v(x)},\quad x\in\mathbb R;\quad m=\inf_{x\ge0}g(x)\quad\Rightarrow\quad m\ge0.\quad \text{(see \eqref{2.2})}
\end{equation}
By \eqref{2.2} and \eqref{4.8}, we obtain the inequality
\begin{equation}\label{4.9}
g'(x)=\frac{u_1'(x)}{v(x)}-\frac{v'(x)}{v^2(x)}u_1(x)\le 0,\quad x\in\mathbb R\quad\Rightarrow\quad m\in[0,\infty).
\end{equation}

Let us show that $m=0.$ Otherwise, if $m>0,$ we obtain (see \eqref{4.8} and \eqref{4.4}):
$$\frac{1}{v(x)}\ge\frac{m}{u_1(x)},\quad x\in\mathbb R\quad\Rightarrow\quad \infty>\int_0^\infty \frac{dt}{r(t)v^2(t)}\ge m\int_0^\infty\frac{dt}{r(t)u_1^2(t)}=\infty,$$
a contradiction. Hence $m=0$ and therefore we have (see \eqref{4.9}):
$$\beta=\lim_{x\to\infty}\frac{u_1(x)}{v(x)} -\alpha\lim_{x\to\infty}\frac{u(x)}{v(x)}=0\quad\Rightarrow\quad \alpha>0.$$

From \lemref{lem4.3} it now follows that
$$v_1(x)=u_1(x)\int_{-\infty}^x\frac{dt}{r(t)u_1^2(t)}=\alpha u(x)\int_{-\infty}^x\frac{dt}{r(t)\alpha^2u^2(t)}=\frac{v(x)}{\alpha}.$$
\end{proof}

 \begin{proof}[Proof of \corref{cor3.3}]
 The assertion immediately follows from \eqref{3.3}.
 \end{proof}

 \begin{proof}[Proof of \thmref{thm3.5}]
Clearly, one can form a FSS of equation  \eqref{3.2} from, say, the following functions:
\begin{equation}\label{4.10}
z_1(x)\equiv 1,\quad z_2(x)=\int_0^x\frac{dt}{r(t)},\quad x\in\mathbb R.
\end{equation}

Assume to the contrary that under condition \eqref{3.5}, equation \eqref{3.2} has a PFSS $\{u(x),v(x)\},$ $x\in\mathbb R.$
Then there are constants $\alpha\in\mathbb R $ and $\beta\in\mathbb R$ $(0<|\alpha|+|\beta|<\infty)$ such that
\begin{equation}\label{4.11}
u(x)=\alpha z_1(x)+\beta z_2(x),\quad x\in\mathbb R.
\end{equation}

Cases 1)\ $\beta\ne0$ and \ 2)\ $\beta=0$\ will be considered separately. In the case 1), from \eqref{2.2} it follows that
\begin{equation}\label{4.12}
0\ge u'(x)=\frac{\beta}{r(x)},\quad x\in\mathbb R\quad \Rightarrow\quad \beta<0.
\end{equation}
Then, according to \eqref{4.4} and \eqref{4.12}, we get (see \eqref{3.5}):
$$u(x)=\alpha+\beta\int_0^x\frac{dt}{r(t)}<0,\quad\text{for}\quad x\gg 1,$$
which contradicts \eqref{2.2}, i.e., case 1) does not occur. Let us now consider case 2). Since $\{u(x),v(x)\},$ $x\in\mathbb R,$ is a PFSS of equation \eqref{3.2}, then
$\beta=0$ from \eqref{4.11} it follows that $\alpha>0$ (see \eqref{2.2}), and therefore, by \eqref{4.2} and \eqref{3.5}, we have
 $$\infty>v(x)=u(x)\int_{-\infty}^x\frac{dt}{r(t)u^2(t)}=\alpha\int_{-\infty}^x\frac{dt}{r(t)\alpha^2} =\frac{1}{\alpha}\int_{-\infty}^x\frac{dt}{r(t)}=\infty.$$
 The obtained contradiction shows that case 2) also does not occur.
 \end{proof}

 \begin{proof}[Proof of \thmref{thm3.6}]
 Assume to the contrary that there exists a solution $z(\cdot)\in L_p$ of \eqref{2.1} such that  $z(x)\not\equiv 0$ for $x\in\mathbb R.$ Then, since $\{u(x), v(x)\},$ $x\in\mathbb R$ is a PFSS of \eqref{2.1}, there are constants $\alpha\in\mathbb R$ and $\beta\in \mathbb R$ $(0<|\alpha|+|\beta|<\infty)$ such that
 \begin{equation}\label{4.13}
 z(x)=\alpha u(x)+\beta v(x),\quad x\in\mathbb R.
 \end{equation}

 Assume that $\beta\ne0.$ Then there is $x_1\gg1$ such that the following inequality holds (see \eqref{2.4}):
 $$\left|\frac{\alpha}{\beta}\right|\frac{u(x)}{v(x)}\le\frac{1}{2}\quad\text{for}\quad x\ge x_1;$$
 and therefore, according to \eqref{2.2}, we have
 \begin{align*}
 \infty&>\|z\|_p^p=\int_{-\infty}^\infty|z(x)|^pdx\ge\int_{x_1}^\infty|\alpha u(x)+\beta v(x)|^pdx\\
 &\ge|\beta|^pv(x_1)^p\int_{x_1}^\infty\left|1-
 \left|\frac{\alpha}{\beta}\right|\frac{u(x)}
 {v(x)}\right|^p dx\ge|\beta|^pv(x_1)^p\int_{x_1}^\infty \frac{dx}{2^p}=\infty
\end{align*}

The obtained contradiction shows that $\beta=0$ and therefore $\alpha\ne0$ (see \ref{4.13}). Once again, we apply \eqref{2.2} and get
$$\infty>\|z\|_p^p=|\alpha|^p\int_{-\infty}^\infty u(x)^pdx\ge|\alpha|^p\int_{-\infty}^0u(x)^pdx\ge|\alpha|^pu(o)^p
\int_{-\infty}^01dx=\infty,$$
a contradiction. Hence $\alpha=0$ and therefore $z(x)\equiv0$, $x\in\mathbb R, $ contradicting the original assumption.
\end{proof}

\begin{lemma}\label{lem4.4} \cite{6}
For the norms of the integral operators $G_1,G_2$ and $G$ (see \eqref{2.21}, \eqref{2.22} and \eqref{2.23}), we have the following relations:
\begin{equation}\label{4.14}
G=G_1+G_2;
\end{equation}
\begin{equation}\label{4.15}
\frac{|G_1\|_{p\to p}+\|G_2\|_{p\to p}}{2}\le \|G\|_{p\to p}\le \|G_1\|_{p\to p}+ \|G_2\|_{p\to p},\quad p\in[1,\infty).
\end{equation}
\end{lemma}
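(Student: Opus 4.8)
\textbf{Proof proposal for Lemma~\ref{lem4.4}.} The plan is to work directly from the definitions \eqref{2.21}--\eqref{2.23} and reduce everything to the fact that $G(x,t)$ splits pointwise. For \eqref{4.14}, fix $x\in\mathbb R$ and $f\in L_p$. In the integral $(Gf)(x)=\int_{-\infty}^\infty G(x,t)f(t)\,dt$, split the domain of integration at $t=x$: for $t\le x$ one has $G(x,t)=u(x)v(t)$ by \eqref{2.21}, and for $t\ge x$ one has $G(x,t)=u(t)v(x)$. Hence $(Gf)(x)=u(x)\int_{-\infty}^x v(t)f(t)\,dt+v(x)\int_x^\infty u(t)f(t)\,dt=(G_1f)(x)+(G_2f)(x)$, which is exactly the operator identity \eqref{4.14}. (One should note that the single overlap point $t=x$ has measure zero, and that the two split integrals converge: this follows from the Hölder/$L_1$ estimates implicit in the boundedness statements below, or one argues first for $f$ of compact support and then passes to the limit — but for the algebraic identity \eqref{4.14} as an identity of operators on their natural domain, the pointwise splitting is all that is needed.)

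For the norm inequalities \eqref{4.15}, the right-hand inequality is immediate from \eqref{4.14} and the triangle inequality for the operator norm: $\|G\|_{p\to p}=\|G_1+G_2\|_{p\to p}\le\|G_1\|_{p\to p}+\|G_2\|_{p\to p}$. The left-hand inequality requires a lower bound on $\|G\|_{p\to p}$ in terms of $\|G_1\|_{p\to p}$ and $\|G_2\|_{p\to p}$ individually. The key observation is a sign/positivity structure: since $u,v>0$ (see \eqref{2.2}), the kernels $G$, $G_1$-kernel $\mathbf{1}_{t\le x}u(x)v(t)$, and $G_2$-kernel $\mathbf{1}_{t\ge x}u(t)v(x)$ are all nonnegative, and the two one-sided kernels are supported on complementary half-lines $\{t\le x\}$ and $\{t\ge x\}$. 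Therefore, testing $G$ against a nonnegative $f\in L_p$ gives $(Gf)(x)=(G_1f)(x)+(G_2f)(x)\ge(G_if)(x)$ pointwise for $i=1,2$, whence $\|G_if\|_p\le\|Gf\|_p$ and so $\|G_i\|_{p\to p}\le\|G\|_{p\to p}$ (using that the operator norm of a positive kernel operator on $L_p$ is attained, or at least approached, on nonnegative functions). This yields $\max(\|G_1\|_{p\to p},\|G_2\|_{p\to p})\le\|G\|_{p\to p}$, and in particular the average $\tfrac12(\|G_1\|_{p\to p}+\|G_2\|_{p\to p})\le\|G\|_{p\to p}$.

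The only delicate point — and the one I would state carefully — is the claim that for a positive kernel operator $T$ the operator norm satisfies $\|T\|_{p\to p}=\sup\{\|Tf\|_p:\ f\ge0,\ \|f\|_p\le1\}$; this is standard (write $|Tf|\le T|f|$ pointwise since the kernel is nonnegative, hence $\|Tf\|_p\le\|T|f|\,\|_p$), but it is the hinge on which the lower bound turns, so I would invoke it explicitly rather than leave it implicit. With that in hand, both halves of \eqref{4.15} are one-line consequences of \eqref{4.14} and the pointwise inequalities $0\le(G_if)(x)\le(Gf)(x)$ for $f\ge0$. Since Lemma~\ref{lem4.4} coincides with Lemma~\ref{lem2.3.2} of \cite{6} (only the standing hypothesis has been weakened from \eqref{2.20} to the bare existence of a PFSS, which does not affect the positivity of $u,v$ or the kernel split), I would simply remark that the proof is identical to the one in \cite{6} and give the short argument above for completeness.
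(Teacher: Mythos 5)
Your proposal is correct and matches the paper's (outsourced) argument: the paper declares \eqref{4.14} obvious by the same pointwise splitting of the kernel \eqref{2.21} at $t=x$, and for \eqref{4.15} it simply refers to \cite{6}, noting that the proof there uses only the positivity \eqref{2.2} and the kernel structure \eqref{2.21} --- which is precisely the triangle-inequality upper bound and the positivity/domination lower bound $\|G_i\|_{p\to p}\le\|G\|_{p\to p}$ that you supply. Your explicit remark that the norm of a positive kernel operator is computed on nonnegative $f$ is the right hinge to make the omitted lower bound rigorous.
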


\begin{proof}
Equality \eqref{4.14} is obvious; the proof of \eqref{4.15} given in \cite{6} relies only on \eqref{2.2} and \eqref{2.21} and is therefore omitted.
\end{proof}

\begin{proof}[Proof of \thmref{thm3.7} and \corref{cor3.8}]
Both assertions were obtained in \cite{6} under conditions \eqref{1.2} and \eqref{2.5} (see Theorem 1.4 and Corollary 1.5 in \cite{6}). In \cite{6}, these conditions were required to guarantee the existence of a PFSS of equation \eqref{2.1} (according to \thmref{thm2.1.2}) and all remaining arguments in the proof of Theorem 1.4 and Corollary 1.5 in \cite{6} only rely on condition \eqref{1.2} and properties \eqref{2.2}, \eqref{2.3} and \eqref{2.4} of the PFSS equation \eqref{2.1}. In \thmref{thm3.7} and \corref{cor3.8}, the validity of \eqref{1.2} and the existence of a PFSS of equation \eqref{2.1} are a priori assumptions, and therefore the proofs of these assertions and Theorem 1.4 and Corollary 1.5 in \cite{6} coincide and are therefore omitted.
 \end{proof}

\begin{proof}[Proof of \lemref{lem3.9}]
For a given $x\in\mathbb R,$ consider the following function in $s\ge0$
 \begin{equation}\label{4.16}
F(s)=\int_{x-s}^{x+s}\frac{dt}{r(t)\rho(t)},\quad s\in[0,\infty]. \end{equation}
From \eqref{1.2}, \eqref{2.2}, \eqref{2.6}, \eqref{4.1} and \eqref{4.16}, it follows that
$$F(0)=0,\quad F(s)>0\qquad\text{for}\qquad s>0;\qquad F(\infty)=\infty.$$
In addition, $F(s)$ is monotone increasing and continuous for all $s\in[0,\infty).$ Hence, equation \eqref{3.7} has a unique solution $s(x),$ and it is clear that the function $s(x),$ $x\in\mathbb R$ is well-defined.
Let us check \eqref{3.8}. Let $t\in [0,s(x)],$ $x\in\mathbb R$ (the case $t\in[-s(x),0],$ $ x\in\mathbb R$ is treated in a similar way).

We have the following obvious equations:
\begin{gather*}
(x+t)-(t+s(x))\le x-s(x)\le x+s(x)\le (x+t)+(t+s(x)),\\
x-s(x)\le x+t-(s(x)-t)\le x+t+(s(x)-t)\le x+s(x).
\end{gather*}

Together with the definition of $s(x)$, $x\in\mathbb R,$ this implies
$$1=\int_{x-s(x)}^{x+s(x)}\frac{d\xi}{r(\xi)\rho(\xi)}\le
\int_{(x+t)-(t+s(x))}^{(x+t)+(t+s(x))}\quad \Rightarrow\quad s(x+t)\le s(x)+t=s(x)+|t|,$$
$$1=\int_{x-s(x)}^{x+s(x)}\frac{d\xi}{r(\xi)\rho(\xi)}\ge
\int_{(x+t)-(s(x)-t)}^{(x+t)+(s(x)-t)}\quad \Rightarrow\quad s(x+t)\ge s(x)-t=s(x)-|t|.$$

The obtained inequalities imply \eqref{3.8}. Furthermore, since the function $s(x),$ $x\in\mathbb R$ satisfies Lipschutz's condition \eqref{3.8}, it has a bounded derivative almost everywhere (see \cite[Ch.9,\S10]{1}).
Setting $s=s(x),$ $x\in\mathbb R$ in \eqref{3.7} and differentiating for almost all $x\in\mathbb R,$ we obtain
$$|s'(x)|=\frac{|r(x+s(x))\rho(x+s(x))-r(x-s(x))\rho(x-s(x))|}{r(x+s(x))\rho(x+s(x))
+r(x-s(x))\rho(x-s(x))}<1\ \Rightarrow\ \eqref{3.9}.$$

Let us now consider \eqref{3.10}. Since both equalities are checked in the same way,   we consider the second one. Assume to the contrary that $x-s(x)\not\rightarrow \infty$ as $x\to\infty.$ Then there exist a constant $c\in[1,\infty)$ and a sequence $\{x_n\}_{n=1}^\infty$ such that
$$x_n\ge c\quad\text{for}\quad n\ge 1,\quad x_n\to\infty\quad\text{as}\quad n\to\infty,
\quad\text{for }\quad x_n-s(x_n)\le c.$$

Then from \eqref{3.7} (for $s=s(x_n),$ $n\ge1)$ and \eqref{4.1}, it follows that
$$1=\int_{x_n-s(x_n)}^{x_n+s(x_n)}\frac{d\xi}{r(\xi)\rho(\xi)}\ge\int_c^{x_n}\frac{d\xi}{r(\xi)\rho(\xi)} \to\infty\quad\text{as}\quad n\to \infty.$$
This yields a contradiction, showing \eqref{3.10} and hence \eqref{3.11}. To prove \eqref{3.12}, define
$$g(x)=s(x)(1+|x|)^{-1},\quad x\in\mathbb R.$$
According to \eqref{3.11}, there is $x_0\gg1$ such that $g(x)\le 1$ for all $x\ge x_0.$ The function is continuous (see \eqref{3.8}) and positive on the finite interval $[-x_0,x_0]$. Hence it is bounded by a constant $c\ge1,$ so that $g(x)\le c$ for $x\in[-x_0,x_0],$ giving \eqref{3.12}.
\end{proof}

\begin{proof}[Proof of \lemref{lem3.11}]
Let $x\in\mathbb R,$ $\omega(x)=[x-s(x),x+s(x)],\quad \xi\in\omega(x),\quad t\in\omega(x).$
From \eqref{2.9}, it follows that
$$-\frac{1}{r(\xi)\rho(\xi)}\le\frac{\rho'(\xi)}{\rho(\xi)}\le\frac{1}{r(\xi)\rho(\xi)}
\quad\Rightarrow $$
\begin{align*}
-1&=-\int_{\omega(x)}\frac{d\xi}{r(\xi)\rho(\xi)}\le-\left|\int_x^t\frac{d\xi}{r(\xi)\rho(\xi)}\right| \le -\int_x^t\frac{d\xi}{r(\xi)\rho(\xi)}\le ln\frac{\rho(t)}{\rho(x)}\\
&\le\int_x^t\frac{d\xi}{r(\xi)\rho(\xi)}\le\left|\int_x^t\frac{d\xi}{r(\xi)\rho(\xi)}\right|\le\int_{ \omega(x)}\frac{d\xi}{r(\xi)\rho(\xi)}=1\quad\Rightarrow\quad \eqref{3.13}
\end{align*}
Now, for $t\in\omega(x),$ from \eqref{2.7}, \eqref{3.13} and the definition of $s(x),$ $x\in\mathbb R,$ it follows that
\begin{align*}
\frac{v(t)}{v(x)}&=\sqrt{\frac{\rho(t)}{\rho(x)}}\exp\left(\frac{1}{2}\int_x^t\frac{d\xi}{r(\xi)\rho(\xi)} \right)\le \sqrt e\exp\left(\frac{1}{2}\left|\int_x^t\frac{d\xi}{r(\xi)\rho(\xi)}\right|\right)\\
&\le\sqrt e\exp\left(\frac{1}{2}\int_{\omega(x)}\frac{d\xi}{r(\xi)\rho(\xi)}\right)=e,
\end{align*}
\begin{align*}
\frac{v(t)}{v(x)}&=\sqrt{\frac{\rho(t)}{\rho(x)}}
 \exp\left(\frac{1}{2}\int_x^t\frac{d\xi}{r(\xi)\rho(\xi)} \right)\ge\frac{1}{\sqrt e}\exp\left(-\frac{1}{2}\left|\int_x^t\frac{d\xi}{r(\xi)\rho(\xi)}\right|\right)\\
&\ge\frac{1}{\sqrt e}\exp\left(-\frac{1}{2}\int_{\omega(x)}\frac{d\xi}{r(\xi)\rho(\xi)}\right)=\frac{1}{e}.
\end{align*}

Inequalities \eqref{3.14} for $u(\cdot)$ are checked in a similar way.
\end{proof}

\renewcommand{\qedsymbol}{}

\begin{proof}[Proof of \thmref{thm3.12}]  \textit{Necessity}.
Since equation \eqref{1.1} is correctly solvable in $L_p,$ $p\in(1,\infty),$ we have $\|G\|_{p\to p} <\infty$ (see \thmref{thm3.7}). Below we successively use \eqref{4.15}, \eqref{2.38}, Lemmas~\ref{lem3.9} and \ref{lem3.11}, and \eqref{2.6}:
\begin{align*}
\infty&> 2\|G\|_{p\to p}\ge\|G_2\|_{p\to p}\ge\sup_{x\in\mathbb R}\left(\int_{-\infty}^x v(t)^pdt\right)^{1/p}\cdot\left(\int_x^\infty u(t)^{p'}dt\right)^{1/p'}\\
&\ge\sup_{x\in\mathbb R}\left[v(x)^p\int_{x-s(x)}^x\left(\frac{v(t)}{v(x)}\right)^pdt\right]^{1/p}\cdot \left[u(x)^{p'}\int_x^{x+s(x)}\left(\frac{u(t)}{u(x)}\right)^{p'}dt\right]^{1/p}\\
&\ge e^{-2}\sup_{x\in\mathbb R}\Big(\rho(x)\cdot s(x)^{\frac{1}{p}+\frac{1}{p'}}\Big)=e^{-2}\mathcal D\Rightarrow \mathcal D<\infty.
\end{align*}
\end{proof}

\renewcommand{\qedsymbol}{\openbox}

\begin{proof}[Proof of \thmref{thm3.12}]  \textit{Sufficiency}.
We need the following lemmas.

\begin{lemma}\label{lem4.5}
For any $x\in\mathbb R,$ the semi-axes $(-\infty,x]$ and $[x,\infty)$ admit $\mathbb R(x,s(\cdot))$-coverings (see Definition \ref{defn2.5.1}, \lemref{lem3.9} and \lemref{lem2.5.2}).
\end{lemma}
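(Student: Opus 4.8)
The plan is to invoke Lemma~\ref{lem2.5.2} directly, so the only real work is to verify that the function $s(\cdot)$ satisfies the hypothesis \eqref{2.39} of that lemma. First I would recall from \lemref{lem3.9} that $s(x)$ is positive and continuous on $\mathbb R$; continuity and positivity are exactly the standing requirements on the weight function $\varkappa(\cdot)$ in Definition~\ref{defn2.5.1} and Lemma~\ref{lem2.5.2}. Thus it remains to check the two limit relations
\[
\lim_{x\to\infty}(x-s(x))=\infty,\qquad \lim_{x\to-\infty}(x+s(x))=-\infty.
\]

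But these are precisely \eqref{3.10}, which was established as part of \lemref{lem3.9}. Hence $\varkappa(\cdot)=s(\cdot)$ meets all the hypotheses of \lemref{lem2.5.2}, and applying that lemma with $\varkappa(\cdot)=s(\cdot)$ yields, for every $x\in\mathbb R$, the existence of an $\mathbb R(x,s(\cdot))$-covering of $[x,\infty)$ and, likewise, of $(-\infty,x]$. This is exactly the assertion of \lemref{lem4.5}.

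There is essentially no obstacle here: the lemma is a bookkeeping step whose entire content is that the Lipschitz function $s(\cdot)$ produced in \lemref{lem3.9} is an admissible ``Otelbaev weight''. The only point worth a sentence of care is that Definition~\ref{defn2.5.1} and \lemref{lem2.5.2} are phrased for an arbitrary positive continuous $\varkappa(\cdot)$ obeying \eqref{2.39}, so one simply specializes to $\varkappa=s$; no new estimate, and in particular not the sharper bounds \eqref{3.8}, \eqref{3.9}, \eqref{3.11} or \eqref{3.12}, is needed for the existence of the covering itself (those finer properties of $s(\cdot)$ will instead be used later, when the covering is exploited to estimate the operator norms $\|G_1\|_{p\to p}$ and $\|G_2\|_{p\to p}$). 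So the proof is the two-line citation: continuity and positivity of $s(\cdot)$ from \lemref{lem3.9}, relation \eqref{3.10} from \lemref{lem3.9}, then \lemref{lem2.5.2}.
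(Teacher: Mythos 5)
Your proof is correct and coincides with the paper's: the paper's Remark~\ref{rem4.7} states that \lemref{lem4.5} follows immediately from \lemref{lem2.5.2}, \eqref{3.8} and \eqref{3.10}, which is exactly your argument (the citation of \eqref{3.8} there plays the same role as your appeal to continuity of $s(\cdot)$, since the Lipschitz bound is what yields that continuity in \lemref{lem3.9}). Nothing further is needed.
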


\begin{lemma}\label{lem4.6}
Let $x\in\mathbb R,$ and let $\{\Delta_n\}_{n=-\infty}^{-1}$ and $\{\Delta_n\}_{n=1}^\infty$ be
 $\mathbb R(x,s(\cdot))$-coverings of  the semi-axes $(-\infty,x]$ and $[x,\infty)$, respectively. Then we have the equalities
 \begin{equation}\label{4.17}
 \int_{\Delta_n^{(+)}}^{\Delta_{-1}^{(+)}}\frac{d\xi}{r(\xi)
 \rho(\xi)}=|n|-1\quad\text{for}\quad n\le -1\quad \int_{\Delta_1^{(-)}}^{\Delta_n^{(-)}}\frac{d\xi}{r(\xi)\rho(\xi)}=n-1\quad \text{for}\quad n\ge 1.
 \end{equation}
\end{lemma}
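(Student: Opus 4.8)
The plan is to reduce both identities in \eqref{4.17} to the defining relation \eqref{3.7} of the function $s(\cdot)$ combined with the gluing conditions of Definition \ref{defn2.5.1}, using nothing more than additivity of the integral.

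First I would record the elementary observation underlying everything: for any segment $\Delta_m=[x_m-s(x_m),x_m+s(x_m)]$ occurring in an $\mathbb R(x,s(\cdot))$-covering, the very definition of $s(x_m)$ (equation \eqref{3.7} at the point $x_m$) gives
\[
\int_{\Delta_m^{(-)}}^{\Delta_m^{(+)}}\frac{d\xi}{r(\xi)\rho(\xi)}=\int_{x_m-s(x_m)}^{x_m+s(x_m)}\frac{d\xi}{r(\xi)\rho(\xi)}=1.
\]
Since $r(\cdot)\rho(\cdot)$ is positive and continuous on $\mathbb R$ (see \eqref{1.2}, \eqref{2.2}, \eqref{2.6}), the integrand $1/(r\rho)$ is locally integrable, so the integral over any finite interval splits as the sum of the integrals over the subintervals cut out by any finite increasing set of interior points.

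For $n\ge 1$ I would then invoke condition 1) of Definition \ref{defn2.5.1}, namely $\Delta_k^{(+)}=\Delta_{k+1}^{(-)}$: the points $\Delta_1^{(-)}<\Delta_2^{(-)}<\dots<\Delta_n^{(-)}$ partition $[\Delta_1^{(-)},\Delta_n^{(-)}]$ into the $n-1$ consecutive blocks $[\Delta_k^{(-)},\Delta_{k+1}^{(-)}]=[\Delta_k^{(-)},\Delta_k^{(+)}]$, $1\le k\le n-1$. Summing the elementary observation over these blocks yields
\[
\int_{\Delta_1^{(-)}}^{\Delta_n^{(-)}}\frac{d\xi}{r(\xi)\rho(\xi)}=\sum_{k=1}^{n-1}\int_{\Delta_k^{(-)}}^{\Delta_k^{(+)}}\frac{d\xi}{r(\xi)\rho(\xi)}=n-1,
\]
which is the second equality in \eqref{4.17}. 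The first equality is entirely symmetric: for $n\le -1$, the ``resp.'' form of condition 1), $\Delta_{k-1}^{(+)}=\Delta_k^{(-)}$, shows that the points $\Delta_n^{(+)}=\Delta_{n+1}^{(-)},\ \Delta_{n+1}^{(+)}=\Delta_{n+2}^{(-)},\ \dots,\ \Delta_{-1}^{(-)},\ \Delta_{-1}^{(+)}$ split $[\Delta_n^{(+)},\Delta_{-1}^{(+)}]$ into the blocks $[\Delta_k^{(-)},\Delta_k^{(+)}]$, $n+1\le k\le -1$; there are $(-1)-(n+1)+1=|n|-1$ of them, and summing again gives $|n|-1$.

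There is essentially no obstacle here: \eqref{4.17} is a bookkeeping consequence of \eqref{3.7} and the matching-endpoint conditions on the covering. The only point deserving a line of care is the index count in the negative case, together with the degenerate cases $n=1$ and $n=-1$ (empty sums of value $0$), which one should check agree with the right-hand sides.
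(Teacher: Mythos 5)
Your proof is correct and is exactly the intended argument: the paper itself gives no proof of Lemma~\ref{lem4.6}, only the pointer in Remark~\ref{rem4.7} to the analogous identity (3.32) in \cite{6} for the function $d(\cdot)$, which rests on the same telescoping of unit integrals $\int_{\Delta_k^{(-)}}^{\Delta_k^{(+)}}\frac{d\xi}{r(\xi)\rho(\xi)}=1$ supplied by \eqref{3.7} together with the endpoint-matching conditions of Definition~\ref{defn2.5.1}. Your index count in the negative case and the remark about the degenerate cases $n=\pm1$ are both accurate.
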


\begin{remark}\label{rem4.7} \lemref{lem4.5} immediately follows from \lemref{lem2.5.2}, \eqref{3.8} and \eqref{3.10} (see \cite{5}).

The proof of \lemref{lem4.6} is similar to the proof of equalities (3.32) in \cite{6} because the function $s(\cdot)$ is a `twin sister' of the function $d(\cdot)$ (see \lemref{lem2.3.5},  \eqref{2.30} and \lemref{lem3.9}).
\end{remark}

The proof of the sufficiency of the conditions of \thmref{thm3.12} is similar to the proof of Theorem 1.8 in \cite{6} because the latter proof only relies on the properties of the PFSS of equation \eqref{2.1}, Lemmas \ref{lem4.4}, \ref{lem4.5}, \ref{lem4.6}, \thmref{thm2.4.2}, and consists of checking the upper estimates in \eqref{3.16}.
\end{proof}

\begin{proof}[Proof of \corref{cor3.13}]
Below we show that each of the relations \eqref{3.17}, \eqref{3.18}, \eqref{3.19}, \eqref{3.20}, \eqref{3.21} implies the inequality $\mathcal D<\infty$ (see \eqref{3.15}), which guarantees the correct solvability of equation \eqref{1.1} in $L_p$, $p\in(1,\infty).$ So we proceed case by case.

1) Let $\sigma_1<\infty.$ Below, for $x\in\mathbb R,$ we use the definition of the function $s(\cdot)$ (see \eqref{3.7} and \eqref{3.13}):
$$1=\int_{x-s(x)}^{x+s(x)}\frac{dt}{r(t)\rho(t)}=\rho(x)\int_{x-s(x)}^{x+s(x)}\left(\frac{\rho(t)}{\rho(x)} \right)\frac{dt}{r(t)\rho^2(t)}\ge \frac{2}{e\sigma_1}\rho(x)s(x)\ \Rightarrow \ \mathcal D<\infty.$$

2) Let $\sigma_2<\infty.$ By \eqref{3.11}, there is $x_0\gg 1$ such that the following inequalities hold:
\begin{equation}\label{4.18}
\rho(x)s(x)\le\rho(x)|x|\le 2\sigma_2\quad\text{for}\quad |x|\ge x_0.
\end{equation}
Furthermore, the function $(\rho(x)s(x))$ is non-negative and continuous on the finite interval $[-x_0,x_0]$ (see \eqref{2.6} and \eqref{3.8}) and is therefore bounded there. Together with \eqref{4.18}, this gives the inequality $\mathcal D<\infty.$

3) Let $x\in\mathbb R. $ From \eqref{2.2}, \eqref{2.3} and \eqref{2.21}, it follows that
\begin{align}
\int_{-\infty}^{\infty}&q(t) G(x,t)dt =u(x)\int_{-\infty}^x q(t)v(t)dt+v(x)\int_x^\infty q(t)u(t)dt\label{4.19}\\
&=u(x)\int_{-\infty}^x(r(t)v'(t))'dt+v(x)\int_x^\infty (r(x)u'(t))'dt\nonumber\\
& =u(x)\left(r(t)v'(x)\bigg|_{-\infty}^x\right)
+v(x)\left((r(t)u'(t)\bigg|_x^\infty\right) \le r(x)(v'(x)u(x)-u'(x)v(x))=1.\nonumber\end{align}

Below, for $x\in\mathbb R$, we use relations \eqref{4.19}, \eqref{3.14} and the definition of the function $s(x),$ $x\in\mathbb R:$
\begin{align*}
1&\ge\int_{-\infty}^\infty q(t)G(x,t)dt\ge\int_{x-s(x)}^{x+s(x)}q(t)G(x,t)dt\\
&=u(x)v(x)\int_{x-s(x)}^xq(t)\left(\frac{v(t)}{v(x)}\right)dt+u(x)v(x)\int_x^{x+s(x)}q(t)
\left(\frac{u (t)}{u(x)}\right)dt\\
&\ge\frac{\rho(x)}{e}\int_{x-s(x)}^{x+s(x)}q(t)dt=\frac{1}{e}(\rho(x)\cdot s(x))\left[\frac{1}{2s(x)}\int_{x-s(x)}^{x+s(x)}q(t)dt\right]\\
&\ge\frac{2\sigma_3}{e}(\rho(x)s(x))\ \Rightarrow\ \mathcal D<\infty.
\end{align*}

4) The assertion follows from criterion \eqref{3.19}:
$$\sigma_3=\inf_{x\in\mathbb R}\left[\frac{1}{2s(x)}\int_{x-s(x)}^{x+s(x)}q(t)dt\right]\ge \sigma_4\inf_{x\in\mathbb R}\left[\frac{1}{2s(x)}\int_{x-s(x)}^{x+s(x)}1 dt\right]=\sigma_4>0 \Rightarrow \mathcal D< \infty.$$

5) From \eqref{3.21}, it follows that $\dfrac{1}{r}\in L_1.$

Furthermore, since \eqref{2.1} has a PFSS $\{u(x),v(x)\},$ $x\in\mathbb R,$ from \eqref{2.2} and \eqref{4.2} we obtain the inequalities:
\begin{gather*}
v(x)=u(x)\int_{-\infty}^x\frac{dt}{r(t)u^2(t)}\le\frac{1}{u(x)}\int_{-\infty}^x\frac{dt}{r(t)},\quad x\in\mathbb R,\\
u(x)=v(x)\int^{\infty}_x\frac{dt}{r(t)v^2(t)}\le\frac{1}{v(x)}\int^{\infty}_x\frac{dt}{r(t)},\quad x\in\mathbb R.
\end{gather*}

These inequalities imply the estimates:
\begin{gather*}
\rho(x)=u(x)\int_{-\infty}^x\frac{dt}{r(t)  }\le \left(\int_0^\infty\frac{dt}{r(t)}\right)^{-1}\cdot\int_{-\infty}^x\frac{dt}{r(t)}\cdot\int_x^\infty \frac{dt}{r(t)}\quad \text{for}\ x\le 0,\\
\rho(x)=u(x)v(x)\le\int_x^{\infty}\frac{dt}{r(t) }\le \left(\int^0_{-\infty}\frac{dt}{r(t)}\right)^{-1}\cdot\int_{-\infty}^x\frac{dt}{r(t)}\cdot\int_x^\infty \frac{dt}{r(t)}\quad \text{for}\ x\ge 0.
\end{gather*}

Hence,
\begin{equation}\label{4.20}
\rho(x)\le\tau\int_{-\infty}^x\frac{dt}{r(t)}\cdot\int_x^\infty\frac{dt}{r(t)},\quad x\in\mathbb R,
\end{equation}
where $$ \tau=\max\left\{\left(\int_{-\infty}^0\frac{dt}{r(t)}\right)^{-1},\ \left(\int_0^\infty\frac{dt}{r(t)}\right)^{-1}\right\}.$$
The assertion now follows from \eqref{4.20} and criterion \eqref{3.17}
\end{proof}

\begin{proof}[Proof of \corref{cor3.14}]
By 2) (see \eqref{3.22}), for any $n\ge 1,$ there is $x_0(u)\gg1$ such that $r(x)\rho(x)\ge n$ for $|x|\ge x_0(n).$ Furthermore, according to \eqref{3.10}, there is $x_1(n)\gg1$ such that
 $$[x-s(x),x+s(x)]\cap[-x_0(n),x_0(n)]=\emptyset\quad\text{for}\quad |x|\ge x_1(n).$$
 Set $x_2(n)=\max\{x_0(n),x_1(n)\}.$ Then for $|x|\ge x_2(n),$ from 2) and \lemref{lem3.9}, it follows that
$$1=\int_{x-s(x)}^{x+s(x)}\frac{dt}{r(t)\rho(t)}\le\frac{2}{n}s(x)\ \Rightarrow\ s(x)\ge\frac{n}{2}\quad\text{for}\quad |x|\ge x_2(n),$$
i.e., $s(x)\to\infty$ as $|x|\to\infty.$ Besides, from Lemmas \ref{lem3.9} and \ref{lem3.11}, we obtain
\begin{gather*}
1=\int_{x-s(x)}^{x+s(x)}\frac{dt}{r(t)\rho(t)}\ge\frac{1}{e}\, \frac{1}{\rho(x)}\int_{x-s(x)}^{x+s(x)}\frac{dt}{r(t)}\ge\frac{2}{er_0}\,\frac{s(x)}{\rho(x)}\quad\text{(see 1) in \eqref{3.22})}\ \Rightarrow\\
\rho(x)\ge c^{-1}s(x)\quad\text{for}\quad |x|\ge x_2(n),\quad c\in [1,\infty)\ \Rightarrow\\
\rho(x)s(x)\ge c^{-1} s^2(x)\to\infty\quad\text{as}\quad |x|\to\infty\ \Rightarrow \ \mathcal D=\infty.
\end{gather*}
It remains to refer to \thmref{thm3.12}.
\end{proof}

\begin{proof}[Proof of \thmref{thm3.17}]
Below, we assume that all conditions in \thmref{thm3.17} are satisfied and do not quote them. To prove the theorem, we establish the inequalities
\begin{equation}\label{4.21}
c^{-1}\mathcal D_1\le\mathcal D \le c(p)\mathcal D_1,\quad p\in(1,\infty);\quad c,c(p)\in[1,\infty).
\end{equation}
Here
\begin{equation}\label{4.22}
 \mathcal D=\sup_{x\in\mathbb R}(\rho(x)s(x)),\quad\mathcal D_1=\sup_{x\in\mathbb R}(\rho_1(x)s_1(x)),
\end{equation}
$\rho(\cdot),$ $\rho_1(\cdot)$ are the functions generating the PFSS of equations \eqref{2.1} and \eqref{2.11}, respectively (under conditions \eqref{1.2} and \eqref{3.25}); $s(x) $ and $s_1(x)$ are the solutions in $s\ge0$ (for a fixed $x\in\mathbb R) $ of the equations
\begin{equation}\label{4.23}
\int_{x-s}^{x+s}\frac{dt}{r(t)\rho(t)}=1,\qquad \int_{x-s}^{x+s}\frac{dt}{r(t)\rho_1(t)}=1,
\end{equation}
respectively (see \lemref{lem3.9}).

Clearly, \thmref{thm3.17} follows from \eqref{4.21} and \thmref{thm3.12}. So let us go to \eqref{4.21}. We need the inequalities (see \eqref{2.23}):
\begin{align}
\|G_2\|_{p\to p}&\le c(p)\sup_{x\in\mathbb R}\left[\int_{-\infty}^x \rho(t)\exp\left(-(p-1)\int_t^x\frac{d\xi}{ r(\xi)\rho(\xi)}\right)dt\right]^{1/p}\label{4.24}\\
&\quad \cdot\left[\int_x^\infty\rho(t)\exp\left(-\int_x^t\frac{d\xi}{r(\xi)\rho(\xi)}\right)dt\right]^{1/p'}\quad\text{for}
\quad p\in(1,2],\nonumber
\end{align}
\begin{align}
\|G_2\|_{p\to p}&\le c(p)\sup_{x\in\mathbb R}\left[\int_{-\infty}^x \rho(t)\exp\left(-\int_t^x\frac{d\xi}{ r(\xi)\rho(\xi)}\right)dt\right]^{1/p}\label{4.25}\\
&\quad \cdot\left[\int_x^\infty\rho(t)\exp\left(-(p'-1)\int_x^t\frac{d\xi}{r(\xi)\rho(\xi)}\right)dt\right]^{1/p'}\quad\text{for}
\quad p\in(2,\infty),\nonumber
\end{align}
\begin{equation}\label{4.26}
\|G_2\|_{p\to p} \ge  \sup_{x\in\mathbb R}\left[\int_{-\infty}^x v(t)^pdt\right]^{1/p}\cdot\left[\int_x^ \infty u(t)^{p'}dt\right]^{1/p'}\ \quad\text{for}\quad p\in(1,\infty)
\end{equation}
(see relations (3.30), (3.34) and (3.35) in \cite{6}).
We can use these inequalities in our proof because they were obtained in \cite{6} from the properties of the PFSS of equation \eqref{2.1} (see Definition \ref{defn2.1.1}) and Theorems \ref{thm2.4.2} and \ref{thm2.1.3}.

To prove the upper estimate in \eqref{4.24}, denote by $\{\Delta\}_{n=-\infty}^{-1}$ and $\{\Delta_n\}_{n=1}^\infty$ the segments of $\mathbb R(x,s(x)$-coverings of the semi-axes $(-\infty,x]$ and $[x,\infty)$, respectively (see Definition \ref{defn2.5.1}, \lemref{lem2.5.2} and \eqref{3.10}), constructed for the function $\rho_1(\cdot);$ by the letter ``$a$'' $(a\in[1,\infty))$, for the sake of clarity, we denote below the constant for which, by the conditions of the theorem, the following inequalities hold:
\begin{equation}\label{4.27}
a^{-1}\rho(\xi)\le\rho_1(\xi)\le ap(\xi)\quad \xi\in\mathbb R.
\end{equation}

Below, for $p\in(1,2),$ we successively use relations   \eqref{3.15}, \eqref{3.16}, \eqref{3.13}, \eqref{4.24} and \eqref{4.17}:
\begin{align*}
c^{-1}\mathcal D&\le\|G_2\|_{p\to p}\le c(p)\sup_{x\in\mathbb R}\left[\int_{-\infty}^x\rho(t)\exp\left(-(p-1)\int_x^t\frac{d\xi}{r(\xi)\rho(\xi)}\right)dt\right]^{1/p}\\
&\quad\cdot\left[\int_x^\infty\rho(t)\exp\left(-\int_x^t\frac{d\xi}{r(\xi)\rho(\xi)}\right)dt\right]^{1/p'}\\
&\le c(p)\sup_{x\in\mathbb R}\left[a\int_{-\infty}^x\rho_1(t)\exp\left(-\frac{p-1}{a}\int_t^x\frac{d\xi}{r(\xi)\rho_1(\xi)}\right)dt\right ]^{1/p}\\
&\quad\cdot\left[a\int_x^\infty\rho_1(t)\exp\left(-\frac{1}{a}\int_x^t\frac{d\xi}{r(\xi)\rho_1(\xi)}\right)dt\right] ^{1/p'}\\
&\le ac(p)\sup_{x\in\mathbb R}\left[\sum_{n=-\infty}^{-1}\int_{\Delta_n}\rho_1(t)\exp\left(-\frac{p-1}{a}\int_t^{\Delta_{-1}^{(+)}}\frac{d\xi} {r(\xi)\rho_1(\xi)}\right)dt\right]^{1/p}\\
&\quad\cdot\left[\sum_{n=1}^\infty\int_{\Delta_n}\rho_1(t)\exp\left(-\frac{1}{a}\int_{\Delta_1^{(-)}}^t\frac{ d\xi}{r(\xi)\rho_1(\xi)}\right)dt\right]^{1/p'}\\
&\le ac(p)\sup_{x\in\mathbb R}\left[\sum_{n=-\infty}^{-1}2e\rho_1(x_n)s_1(x_n)\exp\left(-\frac{p-1}{a}\int_ {\Delta_n^{(+)}}^{\Delta_{-1}^{(+)}} \frac{d\xi}{r(\xi)\rho_1(\xi)}\right)\right]^{1/p}\\
&\quad\cdot\left[\sum_{n=1}^\infty 2e\rho_1(x_n)s_1(x_n)\exp\left(-\frac{1}{a}\int_{\Delta_1^{(-)}}^{\Delta_n ^{(-)}}\frac{d\xi}{r(\xi)\rho_1(\xi)}\right)\right]^{1/p'}\\
&\le 2eac(p)\left[\mathcal D_1\sum_{n=1}^\infty\exp\left(-\frac{p-1}{a}(n-1)\right)\right]^{1/p}\cdot\left[ \mathcal D_1\sum_{n=1}^\infty\exp\left(-\frac{n-1}{2}\right)\right]^{1/p'}=c(p)\mathcal D_1,
\end{align*}
i.e., $\mathcal D\le c(p)\mathcal D_1,$ as required. The upper estimate in \eqref{4.21} for $p\in(2,\infty)$ is proved in a similar way, with the help of \eqref{4.25} instead of \eqref{4.24}.

Let us go to the lower estimate in \eqref{4.21}.
Below, for $p\in(1,\infty),$ we successively use \eqref{3.16}, \thmref{thm2.4.2}, \eqref{4.26} and Lemmas \ref{lem3.9} and \ref{lem3.11}:
\begin{align*}
&c(p)\mathcal D\ge\|G_2\|_{p\to p}\ge\sup_{x\in\mathbb R}\left[\int_{-\infty}^x v(t)^pdt\right]^{1/p}\cdot \left[\int_x^\infty u(t)^{p'}dt\right]^{1/p'}\\
&=\sup_{x\in\mathbb R}\left[\int_{-\infty}^x \rho(t)^{p/2}\exp\left(\frac{p}{2}\int_{x_0}^t\frac{d\xi}{r(\xi)\rho(\xi)}\right)dt\right]^{1/p}\cdot\left[\int_ x^\infty \rho(t)^{p'/2}\exp\left(-\frac{p'}{2}\int_{x_0}^t \frac{d\xi}{r(\xi)\rho(\xi)}\right)dt\right]^{1/p'}\\
&=\sup_{x\in\mathbb R}\left[\int_{-\infty}^x\rho(t)^{p/2}\exp\left(-\frac{p}{2}\int_t^x\frac{d\xi}{r(\xi)\rho(\xi)}\right)dt\right] ^{1p}\cdot \left[\int_x^\infty\rho(t)^{p'/2}\exp\left(-\frac{p'}{2}\int_x^t\frac{d\xi}{r(\xi)\rho(\xi)}\right)dt\right]^{1/p'}\\
&\ge\sup_{x\in\mathbb R}\left[\int_{x-s_1(x)}^x\rho(t)^{p/2}\exp\left(-\frac{p}{2}\int_t^x\frac{d\xi}{r(\xi)\rho(x)}\right)\right]^{1/p}\\
&\quad\cdot\left[\int_x^{x+s_1(x)}\rho(t)^{p'/2}\exp\left(-\frac{p'}{2}\int_x^t\frac{d\xi}{r(\xi)\rho(\xi)}\right) dt\right]^{1/p'}\\
&\ge\sup_{x\in\mathbb R}\left[a^{-p/2} e^{-p/2}\rho_1(x)^{p/2}\exp\left(-\frac{ap}{2}\int_{x-s_1(x)}^x\frac{d\xi}{r(\xi)\rho_1(\xi)}\right)s_1(x)\right] ^{1/p}\\
&\quad\cdot\left[a^{-p'/2}e^{-p'/2}\rho_1(x)^{p'/2}\exp\left(-\frac{ap'}{2}\int_x^{x+s_1(x)}\frac{d\xi}{r(\xi)\rho_1(\xi)}\right) s_1(x)\right]^{1/p'}\\
&=(ae)^{-1}\exp\left(-\frac{a}{2}\right)\sup_{x\in\mathbb R}(\rho_1(x)s_1(x))=c^{-1}\mathcal D_1\ \Rightarrow\ \eqref{4.21}.
\end{align*}
\end{proof}

\begin{proof}[Proof of \thmref{thm3.18}]
The relations $\rho(x)\asymp\rho_1(x)$ for $x\in(-\infty,0]$ and $[0,\infty)$ are proved in the same way. They imply the assertion of the theorem. Therefore, below we only consider the case $[x,\infty).$ Since for equations \eqref{2.1} and \eqref{2.11} the Hartman-Wintner problem for $x\to\infty$ is solvable, there exists a FSS $\{\hat u(x),\hat v(x)\},$ $x\in\mathbb R, $ of equation \eqref{2.1} for which equalities \eqref{2.12}--\eqref{2.14} hold as $x\to\infty.$ In addition, since $\{u(x),v(x)\},$ $x\in\mathbb R,$ is a PFSS of equation \eqref{2.1}, there are constants $\alpha$ and $\beta$ $(0<|\alpha|+|\beta|<\infty)$ such that
\begin{equation}\label{4.28}
\hat u(x)=\alpha u(x)+\beta v(x),\quad x\in\mathbb R.
\end{equation}
Assume that in \eqref{4.28} we have $\beta\ne0.$ By \eqref{2.12}, there is $x_1\gg1$ such that
\begin{equation}\label{4.29}
2^{-1} u_1(x)\le\hat u(x)\le 2u_1(x)\quad\text{for}\quad x\ge x_1.
\end{equation}
Then from \eqref{4.4}, \eqref{4.28} and \eqref{4.29}, it follows that
\begin{equation}\label{4.30}
\int_{x_1}^\infty\frac{dt}{r(t)\hat u(x)^2}=\int_{x_1}^\infty\frac{1}{r(t)u_1^2(t)}\left(\frac{u_1(t)}{\hat u(t)}\right)^2dt\ge\frac{1}{4}\int_{x_1}^\infty\frac{dt}{r(t)u_1^2(t)}=\infty.
\end{equation}

Furthermore, according to \eqref{2.4}, by enlarging $x_1$ if needed, we get
\begin{equation}\label{4.31}
\left|\frac{\alpha}{\beta}\right|\frac{u(x)}{v(x)}\le\frac{1}{2}\quad\text{for}\quad x\ge x_1.
\end{equation}
Now, from \eqref{4.28}, \eqref{4.30}, \eqref{4.31} and \eqref{4.4}, we obtain
\begin{align*}
\infty&=
\int_{x_1}^\infty\frac{dt}{r(t)\hat u^2(t)}=\int_{x_1}^\infty
\frac{dt}{r(t)(\alpha u(t)+\beta v(t))^2}\le\int_{x_1}^\infty\frac{dt}{r(t)v^2(t)\left(1-\left|\frac{\alpha}{\beta}\right|\frac{u(t)}{v(t)}\right)^2}\\
&\le\frac{4}{\beta^2}\int_{x_1}^\infty\frac{dt}{r(t)v^2(t)}<\infty.
\end{align*}
We get a contradiction. Hence, $\beta=0$ and therefore $\alpha>0$ (see \eqref{4.28}, \eqref{4.29}, \eqref{2.12}, and \eqref{2.2}). Therefore, from \eqref{4.28} (recall that $\beta=0$) and \eqref{2.12}, we obtain
\begin{equation}\label{4.32}
1=\lim_{x\to\infty}\frac{\hat u(x)}{u_1(x)}=\alpha\lim_{x\to\infty}\frac{u(x)}{u_1(x)}\ \Rightarrow \ \lim_{x\to\infty}\frac{u(x)}{u_1(x)} =\frac{1}{\alpha},\quad \alpha\in(0,\infty).
\end{equation}

Note that since $\{u(x),v(x)\},$ $x\in\mathbb R$, and $\{u_1(x),v_1(x)\},$ $x\in\mathbb R,$ are PFSS of equations \eqref{2.1} and \eqref{2.11}, respectively, we obtain from Definition \ref{defn2.1.1}:
\begin{equation}\label{4.33}
\left(\frac{v(x)}{u(x)}\right)'=\frac{1}{r(x)u^2(x)},\quad x\in\mathbb R;\quad\left(\frac{v_1(x)}{u_1(x)}\right)'=\frac{1}{r(x)u_1^2(x)},\quad x\in\mathbb R.
\end{equation}
This implies that
\begin{equation}\label{4.34}
v(x)=\frac{v(o)}{u(o)} u(x)+u(x)\int_0^x\frac{dt}{r(t)u^2(t)},\quad x\ge0,
\end{equation}
\begin{equation}\label{4.35}
v_1(x)=\frac{v_1(o)}{u_1(o)}u_1(x)+ u_1(x) \int_0^x\frac{dt}{r(t)u_1^2(t)},\quad x\ge0.
\end{equation}

Below we use \eqref{4.32}, \eqref{4.34}, \eqref{4.35} and L'H\^{o}pital's rule (see \eqref{4.4}):
\begin{equation}\label{4.36}
\lim_{x\to\infty}\frac{v(x)}{v_1(x)}=\lim_{x\to\infty}\frac{u(x)}{u_1(x)}\frac{\frac{v(o)}{u(o)}+ \int_0^x\frac{dt}{r(t)u^2(t)}}{\frac{v_1(o)}{u_1(o)}+\int_0^x\frac{dt}{r(t)u^2(t)}}=\alpha^{-1}\lim_{x\to\infty} \frac{r(x)u_1^2(x)}{r(x)u^2(x)}=\alpha.
\end{equation}
By \eqref{4.32} and \eqref{4.36}, we thus get
\begin{equation}\label{4.37}
\lim_{x\to\infty}\frac{\rho(x)}{\rho_1(x)}=\lim_{x\to\infty}\frac{u(x)}{u_1(x)}\cdot\frac{v(x)}{v_1(x)}=1.
\end{equation}

In particular, according to \eqref{4.37}, there is $x_2\gg1$ such that
\begin{equation}\label{4.38}
2^{-1}\rho_1(x)\le\rho(x)\le 2\rho(x)\quad\text{for}\quad x\ge x_1.
\end{equation}
Since the function
$$f(x)=\frac{\rho(x)}{\rho_1(x)},\quad x\in[0,x_2]$$
is continuous and positive for $x\in[0,x_2],$ there is $c\ge2$ such that
\begin{equation}\label{4.39}
c^{-1}\le f(x)\le c\quad\text{for}\quad x\in [0,x_1],
\end{equation}
and by \eqref{4.38} and \eqref{4.39}, we obtain the relation $\rho(x)\asymp \rho_1(x),$ $x\in[0,\infty),$ as required
\end{proof}

\begin{proof}[Proof of \thmref{thm3.19}]
First note that equation \eqref{2.1} has a PFSS $\{u(x),v(x)\},$ $x\in\mathbb R$ due to \thmref{thm2.1.2}. Furthermore, since $\frac{1}{r}\in L_1,$ equation \eqref{3.2} has a PFSS $\{u_1(x),v_1(x)\}$, $x\in\mathbb R,$ of the form \eqref{3.1}, and we have the inequalities (see \eqref{3.1}):
\begin{equation}\label{4.40}
\rho_1(x)=u_1(x)v_1(x)\le\frac{1}{w_0}\int_{-\infty}^x\frac{dt}{r(t)}\cdot\int_x^\infty \frac{dt}{r(t)}\le\begin{cases}\int_{-\infty}^x\frac{dt}{r(t)},\quad & x\le0\\ \int_x^\infty\frac{dt}{r(t)},\quad &x\ge0.\end{cases}
\end{equation}
Then, from \eqref{3.26}, \eqref{4.40} and \thmref{thm2.2.3}, it follows that the Hartman-Wintner problems for equations \eqref{2.1} and \eqref{3.2}, for $x\to-\infty$ and $x\to\infty,$ are solvable, and therefore the functions $\rho(x)$ and $\rho_1(x),$ $x\in\mathbb R,$ generating the PFSS of equations \eqref{2.1} and \eqref{3.2}, are weakly equivalent. It remains to refer to \thmref{thm3.17}.
\end{proof}

\section{Example}

First recall that our goal is the study of problem I)-II), $p\in(1,\infty)$ within the framework of \eqref{1.4}. Therefore, although some of our statements are relevant also in the case \eqref{1.3} along with \eqref{1.4} (see, e.g., \thmref{thm3.12}), we restrict our attention to the case \eqref{1.4} and do not give any example of the stud of the question on I)-II) under the condition \eqref{1.3} ( the latter case is considered in \cite{3,4,6,7,8,9}).

Below we consider the equation
\begin{equation}\label{5.1}
-(r(x)y'(x))'+q(x)y(x)=f(x),\quad x\in\mathbb R.
\end{equation}
Here and in the sequel, $f(\cdot)\in L_p,$ $p\in(1,\infty)$ and
\begin{equation}\label{5.2}
r(x)=(1+x^2)^\alpha,\quad\alpha>\frac{1}{2};\quad q(x)=\frac{1}{(1+x^2)^\beta},\quad \beta>\frac{1}{2},\quad x\in\mathbb R.
\end{equation}
The question on I)-II) $p\in(1,\infty)$ is denoted below as problem \eqref{5.1}-\eqref{5.2}. Our study of \eqref{5.1}-\eqref{5.2} is based on Theorems \ref{thm3.12}, \ref{thm3.19} and \corref{cor3.13}. For the reader's convenience, we divide the exposition into stages 1), 2) and 3) and comment on each stage separately.

1)\ \textit{Homogeneous equations}

Below, we use the fact that by \eqref{5.2}, each of the equations
\begin{equation}\label{5.3}
(r(x)z'(x))'=q(x)y(x),\quad x\in\mathbb R,
\end{equation}
\begin{equation}\label{5.4}
(r(x)z'(x))'=0,\quad x\in\mathbb R
\end{equation}
has a PFSS (see Theorems \ref{thm2.1.2} and \ref{thm3.1}). Below we do not refer to this fact.

2)\ \textit{Solving problem \eqref{5.1}-\eqref{5.2} for $\alpha\ge1$}

If $\frac{1}{r}\in L_1,$ one can apply criterion \eqref{3.21} for the correct solvability of problem I)-II), which is convenient because this allows one to avoid the proofs of estimates for the functions $\rho(x),$ $x\in\mathbb R$, and $s(x),$ $x\in\mathbb R$ (cf. \thmref{thm3.12}). In our case, the functions in \eqref{5.2} are even, and therefore we have the following relations (see \eqref{3.21}):
$$
\sigma_5=\sup_{x\in\mathbb R}\left[r(x)\left(\int_{-\infty}^x\frac{dt}{r(t)}\right)^2\left(\int_x ^\infty\frac{dt}{r(t)}\right)^2\right]\le c\sup_{x\ge0}\left[r(x)\left(\int_x^\infty\frac{dt}{r(t)}\right)^2\right].
$$

Denote (see \eqref{5.2}):
\begin{equation}\label{5.5a}
F(x)=(1+x^2)^\alpha\left(\int_x^\infty\frac{dt}{(1+t^2)^\alpha}\right)^2,\quad x\ge0.
\end{equation}
L'H\^opital's rule implies that
\begin{equation}\label{5.6}
\lim_{x\to\infty}\sqrt{F(x)}=\begin{cases} 1,&\quad\text{if}\quad \alpha=1\\
0,&\quad \text{if}\quad \alpha>1.\end{cases}
\end{equation}

By \eqref{5.6}, there is $x_0\gg1$ such that
\begin{equation}\label{5.7}
0<F(x)\le 2\quad\text{for}\quad x\ge x_0.
\end{equation}
Since the continuous positive function $F(x)$ is bounded on $[0,x_0],$ by \eqref{5.7} $F(\cdot)$ is absolutely bounded on the semi-axis $[0,\infty).$ This implies (see \eqref{5.5a}) that $\sigma_5<\infty,$ i.e., for $\alpha\ge1$ problem (5.1)-(5.2) is correctly solvable in $L_p,$ $p\in(1,\infty).$

3)\ \textit{Problem I)-II), $p\in(1,\infty)$ for the model equation \eqref{3.17}}

Thus, in the case $\alpha\in\left(\frac{1}{2},1\right),$ \corref{cor3.13} does not give an answer to the question on I)-II), $p\in(1,\infty)$. Therefore, to complete our study of \eqref{5.1}-\eqref{5.2}, we have to first study problem I)-II), $p\in(1,\infty),$ for the
model equation \eqref{3.27} and then, applying \thmref{thm3.19}, get information on problem \eqref{5.1}-\eqref{5.2}. By \thmref{thm3.1}, the function $\rho(x),$ $x\in\mathbb R,$ generating the PFSS of equation \eqref{3.1}, is of the form
\begin{equation}\label{5.8}
\rho(x)=\frac{1}{w_0}\int_{-\infty}^x\frac{dt}{(1+t^2)^\alpha}\cdot\int_x^\infty\frac{dt}{(1+t^2)^ \alpha},\quad x\in\mathbb R,\quad w_0=\int_{-\infty}^\infty \frac{dt}{(1+t^2)^\alpha}.
\end{equation}

L'H\^opital's rule implies that
\begin{equation}\label{5.9}
\lim_{|x|\to\infty}\rho(x)|x|^{2\alpha-1}=\frac{1}{2\alpha-1},\quad \alpha\in\left(\frac{1}{2},1\right)
\end{equation}
which gives an asymptotic formula for $|x|\to\infty:$
\begin{equation}\label{5.10}
r(x)\rho(x)=\frac{1+\delta(x)}{2\alpha-1}|x|,\quad \lim_{|x|\to\infty}\delta(x)=0.
\end{equation}

To estimate $s(x)$ for $x\gg1$ (see \lemref{lem3.9}), we use equation \eqref{3.7} and relations \eqref{3.10} and \eqref{5.10}:
\begin{align}
&1=\int_{x-s(x)}^{x+s(x)}\frac{dt}{r(t)s(t)}=\int_{x-s(x)}^{x+s(x)}\frac{2\alpha-1}{1+\delta(t)}\, \frac{dt}{t}\le 2(2\alpha-1)\int_{x-s(x)}^{x+s(x)}\frac{dt}{t}\ \Rightarrow\nonumber\\
&c(\alpha):=\frac{1}{2(2\alpha-1)}\le\ln\frac{x+s(x)}{x-s(x)},\quad x\gg1\ \Rightarrow\ s(x)\ge \frac{e^{c(\alpha)}-1}{e^{c(\alpha)}+1}x,\quad x\gg1.\label{5.11}
\end{align}
Thus, for the model equation \eqref{3.27}, problem I)-II), $p\in(1,\infty)$ for $\alpha\in\left(\frac{1}{2},1\right)$ is not correctly solvable in light of \thmref{thm3.12} because (see \eqref{3.15}, \eqref{5.10} and \eqref{5.11})
\begin{align*}
\mathcal D&=\sup_{x\in\mathbb R}(\rho(x)s(x)\ge\sup_{x\gg1}(\rho(x)s(x))\ge\frac{e^{c(\alpha)}-1} {e^{c(\alpha)+1}}\sup_{x\gg1}\frac{1+\delta(x)}{2\alpha-1}\, \frac{x^2}{(1+x^2)}\alpha\\
&\ge\frac{1}{c}\sup_{x\gg1}x^{2(1-\alpha)}=\infty.
\end{align*}

Together with \thmref{thm3.19}, this implies that problem \eqref{5.1}-\eqref{5.2}, $p\in(1,\infty),$ for $\alpha\in\left(\frac{1}{2},1\right)$ is also not correctly solvable.

\end{document}